\definecolor{grey}{rgb}{0.9,0.9,0.9}
\numberwithin{equation}{section}
\newcommand{\ga}{\gamma}
\newcommand{\Ga}{\Gamma}
\newcommand{\Om}{\Omega}
\newcommand{\om}{\omega}
\newcommand{\E}{\mathbb E}
\newcommand{\R}{\mathbb R}
\newcommand{\Z}{\mathbb Z}
\newcommand{\cL}{\mathcal{L}}
\newcommand{\cX}{\mathcal{X}}
\newcommand{\cY}{\mathcal{Y}}
\newcommand{\cV}{\mathcal{V}}
\newcommand{\cH}{\mathcal{H}}
\newcommand{\cR}{\mathcal{R}}
\newcommand{\vep}{\varepsilon}
\newcommand{\SSEP}{\sc ssep}
\newcommand{\scj}{\text{\sc j}}
\newcommand{\scr}{\text{\sc r}}
\newcommand{\scl}{\text{\sc l}}
\newcommand{\scm}{\text{\sc m}}
\newcommand{\zr}{z}
\newcommand{\dD}{D}
\newcommand{\tLinter}{{\tilde\cL^{\text{\hskip-.3mm\scj}}_{\text{\hskip-.3mm\tiny\rm{inter}}}}}
\newcommand{\tLpart}{{\tilde L^{\text{\hskip-.3mm\scj}}_{\text{\hskip-.3mm\tiny\rm{part}}}}}
\newcommand{\Linter}{{\cL^{\text{\hskip-.3mm\scj}}_{\text{\hskip-.3mm\tiny\rm{inter}}}}}
\newcommand{\Lpart}{{L^{\text{\hskip-.3mm\scj}}_{\text{\hskip-.3mm\tiny\rm{part}}}}}
\newcommand{\Lssep}{{\cL_{\Delta}}}
\renewcommand{\ll}{{L^{\hskip-.3mm\scj}_{\scl}}}
\newcommand{\lr}{{L^{\hskip-.3mm\scj}_{\scr}}}
\newcommand{\Ll}{{\cL^{\hskip-.3mm\scj}_{\scl}}}
\newcommand{\Lr}{{\cL^{\hskip-.3mm\scj}_{\scr}}}
\newcommand{\tll}{{\tilde L^{\hskip-.3mm\scj}_{\scl}}}
\newcommand{\tlr}{{\tilde L^{\hskip-.3mm\scj}_{\scr}}}
\newcommand{\tLl}{{\tilde\cL^{\hskip-.3mm\scj}_{\scl}}}
\newcommand{\tLr}{{\tilde\cL^{\hskip-.3mm\scj}_{\scr}}}
\newcommand{\cm}{{F}}
\newcommand{\tcm}{{\hat F}}
\newcommand{\Llvep}{{\cL^{j\vep}_{\text{\hskip-.3mm\scl}}}}
\newcommand{\Lrvep}{{\cL^{j\vep}_{\text{\hskip-.3mm\scr}}}}
\newcommand{\one}{{\mathbf 1}}
\newcommand{\sqr}{\vcenter{
         \hrule height.1mm
         \hbox{\vrule width.1mm height2.2mm\kern2.18mm\vrule width.1mm}
         \hrule height.1mm}}
\newcommand{\dis}{\displaystyle}
\newcommand{\paf}[1]{{#1}} 
\newcommand{\dpred}[1]{{#1}} 
\journalname{Probability Theory and Related Fields}
\begin{document}

\title{Symmetric simple exclusion process with free boundaries
\thanks{To be
  published in Probability Theory and Related Fields. Final publication  available at
  \href{http://link.springer.com}{http://link.springer.com}.}
} 


\titlerunning{SSEP{}   with free boundaries}        

\author{Anna De Masi         \and
        Pablo A. Ferrari \and Errico Presutti 
}

\authorrunning{De Masi, Ferrari, Presutti}

\institute{Anna De Masi \at
              Universit\`a di L'Aquila, 67100 L'Aquila, Italy \\
                            \email{demasi@univaq.it}           
           \and
           Pablo A. Ferrari \at
              Universidad de Buenos Aires, Universidade de S\~ao Paulo\\
              \email{pferrari@dm.uba.ar}
	  \and
           Errico Presutti \at
              Gran Sasso Science Institute, 67100 L'Aquila, Italy\\
              \email{errico.presutti@gmail.com}
}


\maketitle

\begin{abstract}
  We consider the one dimensional symmetric simple exclusion process (SSEP) with
  additional births and deaths restricted to a subset of configurations where
  there is a leftmost hole and a rightmost particle.  At a fixed rate birth of
  particles occur at the position of the leftmost hole and at the same rate,
  independently, the rightmost particle dies.  We prove convergence to a
  hydrodynamic limit and discuss its relation with a free boundary problem.

\keywords{Hydrodynamic limit \and free boundary problems \and Stochastic inequalities}
\end{abstract}

\section{Introduction}
\label{intro}
\dpred{ A free boundary problem  in its simplest version}
is given by the linear heat equation in a domain $\Om$ which itself
changes in time with a law which depends on the same solution. \dpred{
In the Stefan problem} for instance \dpred{ the  heat equation is complemented
by Dirichlet boundary conditions while the}
local velocity of the points of the boundary are specified in terms of the local gradient
of the solution.

The purpose of this paper is to study a particle version of such free boundary
problems.  The linear heat equation is in our case replaced by the one
dimensional symmetric simple exclusion process, SSEP, in the set of
configurations having a rightmost particle and a leftmost hole. Furthermore, at
a given rate the rightmost particle dies and at the same rate, independently, a
birth of a particle occurs at the position of the leftmost hole. Since the leftmost
hole and the rightmost particle move, we call them free boundaries.

More precisely, Call $\eta\in\{0,1\}^\Z$ a particle configuration, think of
$\eta$ as the subset of $\Z$ occupied by particles and consider those $\eta$
having a rightmost particle located at $\scr(\eta):=\max(\eta)$ and a leftmost
hole located at $\scl(\eta):=\min(\Z\setminus\eta)$.  Let $(\eta_t)$ be the
Markov process performing symmetric simple exclusion process at rate $\frac12$
and such that the rightmost particle and the leftmost hole are killed at rate
$\scj$:
\[
\eta\to\eta\setminus\{\scr(\eta)\} \text{ and } \eta\to\eta\cup\{\scl(\eta)\}
\text{ at rate }\scj \text{ each}.
\]
Since particles are injected to the left and extracted from the right, $\scj$
can be seen as the average current of particles through the sistem.
It is well known that under a diffusive space and time  scaling the collective behavior
of the SSEP{}  is ruled by the linear heat equation \cite{demasipresutti}. We
perform the same scaling with a parameter $\vep$ such that time is $\vep^{-2}t$,
space $\vep^{-1}r$ and the killing is $\scj=\scj(\vep) = \vep j$, where $j$ is the macroscopic
current.

Consider a function $\rho:\R\to[0,1]$ identically zero to the right of
$\scr(\rho):=\sup\{r:\rho(r)>0\}<\infty$ and identically one to the left of
$\scl(\rho):=\inf\{r:\rho(r)<1\}>-\infty$ and continuous in
$(\scl(\rho),\scr(\rho))$. Call $\cR$ the set of functions with those
properties. We consider a macroscopic density $\rho\in\cR$ and ask the initial
configuration $\eta^{(\vep)}$ indexed by $\vep$ to approach the density $\rho$
as follows:
\begin{equation}
\label{e97}
 \lim_{\vep\to0} \sup_{a\le b}\Bigl|\vep\sum_{\vep x\in[a,b]}\eta^{(\vep)}(x) - \int_{a}^{b}  \rho(r)dr\Bigr|
 \;=\; 0
\end{equation}
Our main result is
\begin{theorem}
  \label{thm1}
Let $(\eta^{(\vep)}_t)$ be the  process with killing at rate $j\vep$ and with
initial configuration $\eta^{(\vep)}$ satisfying  \eqref{e97}. Then for $t\ge
0$ there
exists a function $\rho_t\in\cR$ such that $\rho_0=\rho$ and
\begin{equation}
  \label{e96}
\lim_{\vep\to0}  P\Bigl( \sup_{a\le b}\Bigl|\vep\sum_{\vep
  x\in[a,b]}\eta^{(\vep)}_{t\vep^{-2}}(x) - \int_{a}^{b}
\rho_t(r)dr\Bigr|>\gamma\Bigr)\;=\;0\qquad\text{for all }\gamma>0.
\end{equation}
\end{theorem}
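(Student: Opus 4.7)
The plan is to identify the limit density $\rho_t$ as the unique solution of a free boundary problem (FBP) derived from the dynamics, and then to prove \eqref{e96} by sandwiching $(\eta^{(\vep)}_t)$ between simpler auxiliary SSEP's in which the birth and death sites are frozen at prescribed (non-random) positions.

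\textbf{Candidate limit.} The formal continuum limit suggests that $\rho_t$ solves the heat equation $\partial_t\rho=\tfrac12\partial_r^2\rho$ on $(\scl(\rho_t),\scr(\rho_t))$, with Dirichlet data $\rho=1$ at $\scl(\rho_t)$ and $\rho=0$ at $\scr(\rho_t)$, together with a flux condition $-\tfrac12\partial_r\rho=j$ at both moving endpoints matching the macroscopic current produced by the birth and death at rate $j\vep$. I would first establish existence and uniqueness of such a $\rho_t\in\cR$ on a short time interval by reformulating the problem as a coupled Volterra-type integral equation for the pair $(\scl(\rho_t),\scr(\rho_t))$ and applying a contraction argument, and then iterate in time.

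\textbf{Stochastic sandwich.} Build $(\eta^{(\vep)}_t)$ on a graphical probability space with independent rate-$\tfrac12$ Poisson clocks on bonds of $\Z$ driving stirring exchanges (a classical realisation of the SSEP generator) and independent rate-$j\vep$ clocks for the birth/death attempts. To sidestep the fact that the birth/death sites depend on the current configuration, introduce two auxiliary processes in which these sites are frozen slightly inside, respectively slightly outside, of the predicted FBP boundaries. Since SSEP stirring preserves the coordinate-wise stochastic order, the true process is sandwiched between these two references. Each auxiliary process is an SSEP on a fixed macroscopic interval with a prescribed input/output of strength $j\vep$ at the endpoints, whose hydrodynamic limit is essentially the classical one and yields the heat equation with the same flux boundary conditions. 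Letting the frozen positions tend to the FBP boundaries of Step~1 and invoking uniqueness gives \eqref{e96} on a short time interval, and the argument is then iterated.

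\textbf{Main obstacle.} The core difficulty is closing the self-consistency loop: the boundary positions used to place the auxiliary birth/death sites must coincide in the $\vep\to 0$ limit with the microscopic $\scl(\eta^{(\vep)}_{t\vep^{-2}})$ and $\scr(\eta^{(\vep)}_{t\vep^{-2}})$. This requires tight control of the extremal particle and hole, and in particular ruling out macroscopic spikes of isolated particles or holes that would detach the microscopic boundary from the bulk front. The natural tool is to dominate the rightmost particle and the leftmost hole by (one-sided) random walks with drift induced by the killing, and to iterate the sandwich over a sequence of short macroscopic time windows of length $\delta$ so that the accumulated error in the boundary locations remains subdiffusive relative to $\vep^{-1}$.
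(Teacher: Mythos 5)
Your proposal rests on first establishing well-posedness of the free boundary problem \eqref{0.1} and then identifying the hydrodynamic limit with its solution. This is precisely the step the paper cannot carry out and deliberately avoids: the problem \eqref{0.1} is not of Stefan type, since it imposes \emph{both} Dirichlet and Neumann data at each free boundary, and the authors state explicitly that they are not aware of any existence and uniqueness theorem for it (indeed, even after proving Theorem \ref{thm1} they still list ``a proof that the limit evolution satisfies the Stefan problem'' as missing, except for the stationary profile). Your Volterra/contraction argument for the pair $(\scl(\rho_t),\scr(\rho_t))$ is asserted rather than given, and it would have to overcome the over-determination of the boundary conditions and the absence of any a priori regularity (e.g.\ Lipschitz continuity) of the boundary curves. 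In short, Step~1 assumes away the central analytic difficulty, and without it there is no candidate $\rho_t$ to sandwich against.

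The stochastic sandwich also has a gap. SSEP stirring does preserve coordinatewise order, but the nonlocal moves do not: killing the rightmost particle of $\eta$ and of $\eta'\ge\eta$ removes particles at different sites and in general destroys coordinatewise domination, and the same is true when you compare the true dynamics with an auxiliary process whose birth/death sites are frozen. This is exactly why the paper abandons the coordinatewise order and embeds the particle process into an interface (height-function) process: there the death of the rightmost particle and the birth at the leftmost hole become the operation $\xi\mapsto\max\{\xi,V_{v+(\mp1,1)}\}$ of taking the maximum with a translated cone, which \emph{is} monotone (Proposition \ref{thm5p} and Proposition \ref{m3}). The paper's barriers are then not frozen-boundary SSEPs but ``delta processes'' in which the random number of killings accumulated over each macroscopic window of length $\delta$ is applied all at once at the start or at the end of the window; these are provably above and below the true interface for the height-function order, their hydrodynamic limits $\rho^{\delta,\pm}_t$ are computable by classical $v$-function estimates because within each window only the plain exclusion dynamics acts, and the two barriers differ by at most $j\delta$ in the mass-transport sense, which forces a unique separating element $\rho_t$ without ever solving \eqref{0.1}. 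If you want to salvage your plan, you would need either a well-posedness theory for \eqref{0.1} or a monotone coupling adapted to the nonlocal moves; the interface embedding supplies the latter.
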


\dpred{ We characterize the limit $\rho_t$ in terms of ``lower and upper barriers'',
the inequalities being in the sense of mass transport. The notion is defined
by introducing first a map from  density functions $\rho\in\cR$
to
functions $\phi(r|\rho)$, $r\in \mathbb R$, that we call ``interfaces'' and  then by saying that $\rho \le \rho'$
if $\phi(\cdot|\rho)\le \phi(\cdot|\rho')$.  See Section \ref{sec:4} where the notion is first defined at the particles level and then for densities in $\cR$.

The barriers are defined
by functions $\rho^{\delta,\pm}_t$, $\delta>0$, which satisfy
``discretized free boundary problems''. More specifically}
%
%
%
$\rho^{\delta,-}_t$ evolves according to the heat equation in the intervals
$[n\delta,(n+1)\delta)$ and at times $n\delta$ takes the rightmost portion
$j\delta$ of mass from the right and puts it on the left. Namely, define
the $\delta$-quantiles $\scr^\delta(\rho)$ and $\scl^\delta(\rho)$ by
\begin{equation}
  \label{e95}
  \int_{\scr^\delta(\rho)}^{\infty}\rho(r)dr =\delta,
  \qquad  \int^{\scl^\delta(\rho)}_{-\infty}(1-\rho(r))dr =\delta,
\end{equation}
Define also
\begin{equation}
  \label{d111}
  (\Ga^{\delta}\rho)(r):=
\begin{cases}
  1&\text{if } r\le \scl^{\delta}(\rho)\\
\rho(r) &\text{if }  \scl^{\delta}(\rho)< r< \scr^{\delta}(\rho)\\
0&\text{if } r\ge \scr^{\delta}(\rho),
\end{cases}
\end{equation}
and let $G_t(r,r')$ be the Gaussian kernel (see \eqref{k6}). Set $\rho^{\delta,-}_0=\rho$ and iteratively
\begin{eqnarray}
  \label{g88}
  \rho^{\delta,-}_t:= \begin{cases}
G_{t-n\delta}\rho^{\delta,-}_{n\delta},&\text{if } t\in[n\delta,(n+1)\delta),\quad
n=0,1,\dots\\ \Ga^{j\delta}
\rho^{\delta,-}_{n\delta-},&\text{if } t= n\delta,\quad n=1,2,\dots.
\end{cases}
\end{eqnarray}
which is well defined for
$\delta$ small enough. \dpred{ Define $\rho^{\delta,+}_t$ with the same
  evolution but with initial profile
  $\rho^{\delta,+}_0=\Ga^{j\delta}\rho$. We prove that
\[
\rho^{\delta,-}_t \le \rho_t \le \rho^{\delta,+}_t
\]
for any $\delta$ and any $t\in \delta \mathbb N$. We also prove that any function $\tilde \rho_t$
which satisfies the above inequality (for all $\delta$ and $t$ as above) must necessarily be equal to $\rho_t$
(uniqueness of separating elements).
The precise statement is in Theorem \ref{thm12}}.
In particular this allows to show that:
\begin{theorem}
  \label{teo2}
  Let $\rho\in\cR$ and $\rho_t$ be the evolution of Theorem
  \ref{thm1} with initial datum $\rho$. Let $\rho^{\delta,-}_t$ be the
  evolution \eqref{g88} with the same initial datum.  Then for any $a<b$ real
  numbers and for any $\delta>0$,
\begin{equation}
  \label{g87}
   \Bigl|\int_a^b\rho^{\delta,-}_t(r)dr -\int_a^b\rho_t(r)dr\Bigr|\;\le\; 2 j\delta,\qquad \forall t\ge 0
\end{equation}

\end{theorem}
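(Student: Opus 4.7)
The plan is to combine the barrier sandwich of Theorem~\ref{thm12} with a uniform-in-time bound on the $L^1$-gap between the two barriers. By Theorem~\ref{thm12}, for every $\delta>0$ and every $t\in\delta\N$ we have the interface ordering $\rho^{\delta,-}_t\le\rho_t\le\rho^{\delta,+}_t$. Once this ordering is converted into an integrated inequality, \eqref{g87} will follow from the bound $\|\rho^{\delta,+}_t-\rho^{\delta,-}_t\|_{L^1}\le 2j\delta$.

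The first step is to convert the mass-transport ordering of elements of $\cR$ into a control of integrated differences. Using the interface map $\phi(\cdot|\rho)$ from Section~\ref{sec:4}, I would prove that if $\rho\le\tilde\rho$ in the interface sense then
\[
\Bigl|\int_a^b(\tilde\rho-\rho)(r)\,dr\Bigr|\le\|\tilde\rho-\rho\|_{L^1},
\]
so that the barrier sandwich yields $|\int_a^b(\rho_t-\rho^{\delta,-}_t)|\le\|\rho^{\delta,+}_t-\rho^{\delta,-}_t\|_{L^1}$.

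The second step is to bound $\|\rho^{\delta,+}_{n\delta}-\rho^{\delta,-}_{n\delta}\|_{L^1}\le 2j\delta$ by induction on $n$. At $n=0$ the difference $\Gamma^{j\delta}\rho-\rho$ has $L^1$-norm exactly $2j\delta$, being a packet of mass $+j\delta$ on the left and a packet of mass $-j\delta$ on the right. On each interval $[n\delta,(n+1)\delta)$ both barriers evolve by the Gaussian semigroup $G_t$, which is $L^1$-contractive on differences of solutions, so the gap cannot grow between jump times. At the jump time both barriers undergo the same operator $\Gamma^{j\delta}$; using the interface ordering $\rho^{\delta,-}_{n\delta-}\le\rho^{\delta,+}_{n\delta-}$ and the monotonicity of the quantile maps $\scl^{j\delta}(\cdot),\scr^{j\delta}(\cdot)$, a monotone coupling argument shows that $\Gamma^{j\delta}$ is non-expansive in $L^1$ on such an ordered pair, closing the induction.

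The main obstacle I anticipate is precisely the non-expansiveness of $\Gamma^{j\delta}$: this operator is nonlinear through the dependence of the quantiles $\scl^{j\delta}(\rho),\scr^{j\delta}(\rho)$ on $\rho$, and a careful particle-level coupling, in the spirit of the interface construction of Section~\ref{sec:4}, seems to be the cleanest route. Once the barrier gap is bounded by $2j\delta$ for every $t\in\delta\N$, the estimate \eqref{g87} holds at these times; the extension to general $t\ge 0$ is by continuity, since between jump times $\rho^{\delta,-}_t$ evolves continuously by $G_t$ while $t\mapsto\int_a^b\rho_t(r)\,dr$ is continuous via the characterization of $\rho_t$ through the barriers.
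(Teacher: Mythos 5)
Your overall architecture (sandwich between the two barriers plus a uniform-in-time bound on their gap) is the same as the paper's, but you realize the gap bound quite differently, and the step you yourself flag as the main obstacle is precisely where your route gets hard while the paper's stays trivial. The paper never works with $\|\rho^{\delta,+}_t-\rho^{\delta,-}_t\|_{L^1}$: it measures the gap in the sup norm of the \emph{interfaces} and proves $\sup_r|\phi^{\delta,+}_t(r)-\phi^{\delta,-}_t(r)|\le j\delta$ (equation \eqref{5.1}) by induction, where the heat step is handled by the maximum principle and the jump step is immediate because both barriers take a max with cones whose vertices differ by exactly $j\delta$ in height; there is no nonlinear quantile operator to control. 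The conclusion is then a two-line corollary of the identity $\int_a^b\rho^{\delta,-}_t-\int_a^b\rho_t=\frac12\bigl(\phi_t(b)-\phi^{\delta,-}_t(b)+\phi^{\delta,-}_t(a)-\phi_t(a)\bigr)$ together with $\phi^{\delta,-}_t\le\phi_t\le\phi^{\delta,+}_t$. In short: in the interface picture $\Gamma^{j\delta}$ becomes $\phi\mapsto\max\{\phi,V\}$, and its non-expansiveness in sup norm is free.

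Two concrete issues with your version. First, the non-expansiveness of $\Gamma^{j\delta}$ in $L^1$ is left unproven, and your proposed "monotone coupling" cannot rely on pointwise order: the barriers are ordered only in the interface (mass-transport) sense --- indeed already at time $0$, $\Gamma^{j\delta}\rho$ and $\rho$ are not pointwise comparable. The claim is in fact true for interface-ordered pairs (use $\cm(r;\Gamma_{\scr}u)=(\cm(r;u)-j\delta)_+$ and a total-variation argument on $\cm(\cdot;v)-\cm(\cdot;u)\ge0$, and similarly for the left operation), but this is a genuine lemma you would have to supply, not a routine coupling. Second, your conversion step needs care with constants: from the interface sandwich you get $0\le\cm(r;\rho_t)-\cm(r;\rho^{\delta,-}_t)\le\cm(r;\rho^{\delta,+}_t)-\cm(r;\rho^{\delta,-}_t)\le\|\rho^{\delta,+}_t-\rho^{\delta,-}_t\|_{L^1}$ and then $\bigl|\int_a^b(\rho_t-\rho^{\delta,-}_t)\bigr|=|\cm(a;\cdot)-\cm(b;\cdot)|$ evaluated on the difference; written loosely (e.g.\ via $\sup_r|\phi_t-\phi^{\delta,-}_t|\le2\|\rho^{\delta,+}_t-\rho^{\delta,-}_t\|_{L^1}$) you lose a factor of $2$ and land at $4j\delta$. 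With these repairs your proof closes, but it is substantially longer than the paper's, which gets the (stronger) bound $j\delta$ directly from \eqref{5.1} and \eqref{6.8}.
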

 Theorems \ref{thm1} and \ref{teo2} are proved at the end of Subsection \ref{sub6.1}.

\medskip
A formal limit of \dpred{ our particle system}  leads to
conjecture that $\rho_t$ solves
  \begin{eqnarray}
    \label{0.1}
&& \frac{\partial \rho}{\partial t}= \frac 12  \frac{\partial^2 \rho}{\partial r^2},  \quad
r\in (\scl_t,\scr_t),\\&&  \scr_0, \scl_0, \rho(r,0) \;\text{given} \nonumber\\&&
\rho(\scl_t ,t)=1,\;\;\rho(\scr_t ,t)=0;\quad \frac {\partial\rho}{\partial r}(\scl_t ,t)= \frac {\partial\rho}{\partial r} (\scr_t ,t)=-2j \nonumber
  \end{eqnarray}
where
$\scr_t:=\scr(\rho(\cdot,t))$ and $\scl_t:=\scl(\rho(\cdot,t))$.  \dpred{ Indeed the density flux
$J(r)$ associated to the equation $\frac{\partial \rho}{\partial t}= \frac 12  \frac{\partial^2 \rho}{\partial r^2}$
is equal to
$J(r):=-\frac 12  \frac{\partial \rho}{\partial r}$ so that the last two conditions in \eqref{0.1}
just state that
the outgoing flux at $\scr_t$ is equal to the killing rate $j$ and that the
incoming flux at $\scl_t$ is equal to the birth rate $j$.

The free boundary problem \eqref{0.1} is not of Stefan type.  In fact in
\eqref{0.1} we impose
both Dirichlet and Neumann conditions as we prescribe the values of the function
and of its derivative at the boundaries, while in the classical Stefan problem
the  Dirichlet boundary conditions are complemented by assigning the
speed of the boundary (in terms of the derivative of the solution at the boundaries).
We can obviously recover the velocity of the boundaries from
a smooth solution $\rho(r,t)$ of \eqref{0.1} by
differentiating the identities $\rho(\scl_t ,t)=1$,
$\rho(\scr_t ,t)=0$, thus obtaining:}
  \begin{eqnarray}
    \label{0.2}
&& \frac{d \scl_t }{dt} = \frac{1}{2j}  \frac{\partial^2 \rho}{\partial r^2}(\scl_t ,t), \;\; \frac{d \scr_t }{dt} = \frac{1}{2j}   \frac{\partial^2 \rho}{\partial r^2}(\scr_t ,t)
  \end{eqnarray}
%

  \dpred{ The traditional way to study the hydrodynamic limit of particle
    systems is to prove that the limit law of the system is supported by weak
    solutions of a PDE for which existence and uniqueness of weak solutions
    holds true.  In our case this approach is problematic. While we know
    closeness to the heat equation away from the boundaries, we do not control
    the motion of the boundaries: we only know that they do not escape to
    infinity.  We are not aware of existence and uniqueness theorems for
    \eqref{0.1}, however in general in free boundary problems some assumptions
    of regularity on the motion of the boundaries is required, for instance
    Lipschitz continuity.

Our proof of hydrodynamic limit avoids this pattern as we prove
directly existence of the limit by squeezing
the particles density between lower and upper barriers which have a
unique separating element.  This suggests a variational approach to the
analysis of \eqref{0.1} based on a proof that its classical solutions
are also squeezed by the barriers, or more generally that
any limit of ``approximate solutions'' of \eqref{0.1} lies in between the barriers.
Such an approach has been carried in \cite{CDGP} for a simpler version of \eqref{0.1}
where particles are in the semi-infinite line with reflections at the origin, so that
there is a single free boundary $\scr_t$ where particles are killed at rate $j$,
while births occur at the origin, at same rate $j$. An extension to our case
is in preparation.}

A similar equation has been derived in
\cite{L} for a different interface process.
\dpred{ In that case an existence and uniqueness
theorem for the limit equation is proved to hold (see also reference therein).
Regularity of the free boundary motions
in \cite{L} follows from some monotonicity properties
intrinsic to the model and absent in our case.

Free boundary problems have also been derived
in \cite{Durrett} for particles evolutions via branching and in \cite{LandimValle} for a variant of the simple exclusion  process.

We have a proof that the limit evolution $\rho_t$ in Theorem \ref{thm1}
satisfies \eqref{0.1} in the very special case when $\rho_t$ is a time-independent
linear profile
with slope $-2j$.  Stationary solutions for free boundary problems have also been
studied in \cite{DPT} for non local evolutions in systems which undergo a phase transition.}
%
%

The model we study in this paper is inspired by previous works on Fourier law
with current reservoirs, \cite{DPTV1}--\cite{DPTV2}; its actual formulation came out from discussions with Stefano Olla to whom we are indebted. \dpred{ We are also indebted to F. Comets and
H. Lacoin for helpful comments and discussions and to a referee of PTRF for useful comments.}

In Section \ref{sec:2} we define the particle process and in Section \ref{sec:3} we prove the existence of
a unique invariant measure for the process as seen from the median.

Due to the non local nature of the birth-death process the usual techniques for
hydrodynamic limit fail.  To overcome this problem we use inequalities
based on imbedding the particle process in an interface process. The
relationship between the one dimensional nearest neighbors simple exclusion
process and the interface process is known since the seminal paper by Rost
\cite{rost}, where he established the hydrodynamics of the the asymmetric simple
exclusion process. In Section \ref{sec:4} we define the interface dynamics and
show that our particle process can be realized in terms of the interface
dynamics. We also introduce the delta interface processes which correspond to
the delta particle processes where the killing of particles and holes are
grouped together and occur only a finite number of times (uniformly in the
hydrodynamic limit) and give as limit the delta macroscopic evolution defined in
\eqref {g88}.

In Section \ref{sec:4} we also establish basic inequalities between the true and the
delta interface dynamics by giving a simultaneous explicit graphical
construction of all of them; that is, a coupling.  In Section \ref{sec:5} we
prove convergence in the hydrodynamic limit.  The proof uses that any limit
point of the true interface dynamics is squeezed in between two approximate
evolutions which depend on an approximating parameter $\delta>0$.  In Section
\ref{sec:6} we establish basic properties of the macroscopic evolution.  In
particular, we prove the existence of a stationary solution for the limit
evolution and use this result to prove that at any positive time the particle
density (in the limit evolution) is identically 0 and 1 outside of a compact.
In the last section we summarize the results.

\vskip2cm

\section{The free boundary \SSEP}
\label{sec:2}

The space of particle configurations is
\[
\cX:=\big\{\eta\in\{0,1\}^\Z:\sum_{x\ge 0}
\eta(x) <\infty,\; \sum_{x\le 0}(1-\eta(x))<\infty\big\}
\]
that is, for any $\eta\in\cX$ the number of particles to the right of the origin and
the number of holes to its left are both finite.
A configuration $\eta\in\cX$ has a rightmost particle located at $\scr(\eta)$
and a leftmost hole located at $\scl(\eta)$, where
\begin{equation}
	\label{a2.1}
\scr(\eta):= \max\{x\in\Z:\eta(x)=1\};\qquad
\scl(\eta):= \min\{x\in\Z:\eta(x)=0\}.
		\end{equation}
We define the \emph{median} $ \scm(\eta)$
of a configuration $\eta\in\cX$ as the unique $m\in\Z+\frac12$ such that
\begin{equation}
  \label{m45}
 \sum_{x>m} \eta(x) - \sum_{x<m}(1-\eta(x))=0,
\end{equation}
that is, the number of particles to the right of $\scm(\eta)$ is the same as the
number of holes to its left.  The definition is well-posed because for $\eta\in\cX$, as $m$
increases by one,  \eqref{m45} increases by one and goes to
$\pm\infty$ as $m$ goes to $\mp\infty$.
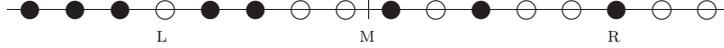
\begin{figure}
\setlength{\unitlength}{.6cm}
\begin{picture}(8,3)(-12,-2)
\linethickness{0.02mm}
\put(-8,-1){\line(1,0){16}}
\put(-7.5,-1){\circle*{.4}}
\put(-6.5,-1){\circle*{.4}}
\put(-5.5,-1){\circle*{.4}}
\put(-4.5,-1){\circle{.4}}
\put(-4.7,-1.7){$\scl$}
\put(-3.5,-1){\circle*{.4}}
\put(-2.5,-1){\circle*{.4}}
\put(-1.5,-1){\circle{.4}}
\put(-.5,-1){\circle{.4}}
\put(0.5,-1){\circle*{.4}}
\put(0,-1.2){\line(0,1){.4}}
\put(-0.2,-1.7){$\scm$}
\put(1.5,-1){\circle{.4}}
\put(2.5,-1){\circle*{.4}}
\put(3.5,-1){\circle{.4}}
\put(4.5,-1){\circle{.4}}
\put(5.5,-1){\circle*{.4}}
\put(5.3,-1.7){$\scr$}
\put(6.5,-1){\circle{.4}}
\put(7.5,-1){\circle{.4}}
\end{picture}
\caption{A typical configuration in $\cX$, black and white circles represent
  respectively particles and holes. $\scr$ is the position of the rightmost particle, $\scl$ the
position of the leftmost hole and $M$ is the median.}
\label{fig:figure1}
\end{figure}
We next define a family of dynamics indexed by a \emph{current} $\scj>0$.  The
particle dynamics is a (countable state) Markov process on $\cX$ whose generator
is
\begin{equation}
  \label{L00}
  \Lpart=L_0+\lr+\ll,
\end{equation}
where
\begin{equation}
  \label{l11}
L_0f(\eta) := \sum_{x\in\Z} \frac12[f(\eta^{x,x+1})-f(\eta)]
\end{equation}
with $\eta^{x,x+1}(y)=\eta(y)$ if $y\ne x,x+1$, $\eta^{x,x+1}(x)=\eta(x+1)$,
$\eta^{x,x+1}(x+1)=\eta(x)$; and
\begin{equation}
  \label{l12}
\ll f(\eta):= \scj\Big(f(\eta\cup\{\scl(\eta)\})-f(\eta)\Big);\quad
\lr f(\eta):= \scj\Big(f(\eta\setminus\{\scr(\eta)\})-f(\eta)\Big),
\end{equation}
where $\eta$ is identified with the set of occupied sites
$\{x\in\Z:\eta(x)=1\}$.
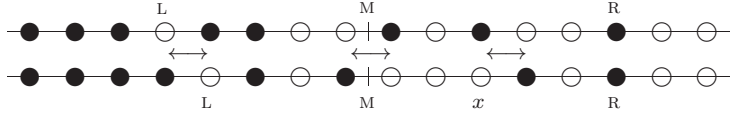
\begin{figure}
\setlength{\unitlength}{.6cm}
\begin{picture}(8,3)(-12,-3)
\linethickness{0.02mm}
\put(-8,-1){\line(1,0){16}}
\put(-7.5,-1){\circle*{.4}}
\put(-6.5,-1){\circle*{.4}}
\put(-5.5,-1){\circle*{.4}}
\put(-4.5,-1){\circle{.4}}
\put(-4.7,-.6){$\scl$}
\put(-4.45,-1.6){$\longleftrightarrow$}
\put(-3.5,-1){\circle*{.4}}
\put(-2.5,-1){\circle*{.4}}
\put(-1.5,-1){\circle{.4}}
\put(-.5,-1){\circle{.4}}
\put(-.4,-1.6){$\longleftrightarrow$}
\put(0.5,-1){\circle*{.4}}
\put(0,-1.2){\line(0,1){.4}}
\put(-0.2,-.6){$\scm$}
\put(1.5,-1){\circle{.4}}
\put(2.5,-1){\circle*{.4}}
\put(3.5,-1){\circle{.4}}
\put(2.6,-1.6){$\longleftrightarrow$}
\put(4.5,-1){\circle{.4}}
\put(5.5,-1){\circle*{.4}}
\put(5.3,-.6){$\scr$}
\put(6.5,-1){\circle{.4}}
\put(7.5,-1){\circle{.4}}

\put(-8,-2){\line(1,0){16}}
\put(-7.5,-2){\circle*{.4}}
\put(-6.5,-2){\circle*{.4}}
\put(-5.5,-2){\circle*{.4}}
\put(-4.5,-2){\circle*{.4}}
\put(-3.7,-2.7){$\scl$}
\put(-3.5,-2){\circle{.4}}
\put(-2.5,-2){\circle*{.4}}
\put(-1.5,-2){\circle{.4}}
\put(-.5,-2){\circle*{.4}}
\put(0.5,-2){\circle{.4}}
\put(0,-2.2){\line(0,1){.4}}
\put(-0.2,-2.7){$\scm$}
\put(1.5,-2){\circle{.4}}
\put(2.5,-2){\circle{.4}}
\put(2.3,-2.7){$x$}
\put(3.5,-2){\circle*{.4}}
\put(4.5,-2){\circle{.4}}
\put(5.5,-2){\circle*{.4}}
\put(5.3,-2.7){$\scr$}
\put(6.5,-2){\circle{.4}}
\put(7.5,-2){\circle{.4}}
\end{picture}
\caption{The effect of three SSEP jumps: the upper line is before and the
  bottom line is after the jumps. The jumps are between the positions $\scl$
  and $\scl+1$, $\scm-\frac12$ and $\scm+\frac12$ and  $x$ and $x+1$. None of
  these jumps change the position of $\scm$.
}
\label{fig:figure2}
\end{figure}

In other words, particles perform symmetric simple
exclusion with generator $L_{0}$ and, at rate $\scj$, the rightmost particle is
``killed'' and
replaced by a hole and at the same rate independently the leftmost hole is
killed and a particle is born at its place.
We omit the proof that the process is well defined at all times. Denote by
$(\eta_t)$ the process with generator $\Lpart$.

\medskip

\noindent
Denote by $B_t$ and $A_t$ the number of particles killed,
respectively born, in the time interval $[0,t]$.  By
definition, $A_t$ and $B_t$ are independent Poisson processes with rate~$\scj$.

\medskip

\begin{lemma}
\label{lemma1.1}
For any $\eta_0\in\cX$,
\begin{equation}
  \label{p1}
 \scm(\eta_t)=\scm(\eta_0)+A_t-B_t
\end{equation}
That is, the marginal distribution of the median of $\eta_t$ is a continuous time
symmetric nearest neighbor random walk on $\Z+\frac12$ with rate $\scj$ to jump
right and $\scj$ to jump left.
\end{lemma}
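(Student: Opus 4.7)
The plan is to check the effect of each jump type in the generator \eqref{L00} on the balance
\[
\Phi(m,\eta) := \sum_{x>m} \eta(x) - \sum_{x<m} (1-\eta(x)),
\]
whose unique zero in $m$ is $\scm(\eta)$. As observed just before the lemma statement, $\Phi(\cdot,\eta)$ is strictly monotone on $\Z+\tfrac12$, changing by exactly one unit per integer step, so it suffices to compute how $\Phi(m,\cdot)$ at the old median is perturbed by each elementary event and then invert the monotonicity to read off the new median.

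For an SSEP exchange (the $L_0$ part) between sites $x$ and $x+1$, if the bond lies entirely on one side of $m = \scm(\eta)$ then neither of the two sums in $\Phi$ changes. The only delicate case is the straddling bond $x = m - \tfrac12$; there, a short computation shows that swapping $\eta(x)$ and $\eta(x+1)$ shifts both $\sum_{y>m}\eta(y)$ and $\sum_{y<m}(1-\eta(y))$ by the same quantity $\eta(x)-\eta(x+1)$, so $\Phi(m,\cdot)$ stays at $0$. Hence $\scm$ is invariant under every exclusion exchange.

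For a birth at $\scl(\eta)$, the site $\scl$ flips from $0$ to $1$; since $\scl\neq m$ (as $m\in\Z+\tfrac12$), this modifies exactly one of the two sums, adding $1$ to $\sum_{y>m}\eta(y)$ when $\scl>m$ or removing $1$ from $\sum_{y<m}(1-\eta(y))$ when $\scl<m$. In either case $\Phi(m,\cdot)$ jumps from $0$ to $+1$ at the old median, and unit-step monotonicity forces $\scm$ to be incremented by $+1$. The symmetric argument for a death at $\scr(\eta)$ gives an increment of $-1$.

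Summing these contributions over all Poisson marks in $[0,t]$ yields $\scm(\eta_t) = \scm(\eta_0) + A_t - B_t$. Because $A_t$ and $B_t$ are independent Poisson processes of rate $\scj$ by construction (they correspond to the independent clocks driving $\ll$ and $\lr$ in \eqref{l12}), the marginal law of $\scm(\eta_t) - \scm(\eta_0)$ is that of a continuous-time symmetric nearest-neighbor random walk on $\Z$ with jump rate $\scj$ in each direction. The only step that really requires a calculation is the straddling-bond identity for $L_0$; the rest is pure bookkeeping and I foresee no substantial obstacle.
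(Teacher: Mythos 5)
Your proposal is correct and takes essentially the same route as the paper: the paper's proof simply asserts that exclusion exchanges leave $\scm$ unchanged while a death (resp.\ birth) shifts it by $-1$ (resp.\ $+1$), and you verify exactly these three facts, only more explicitly, by tracking the balance function $\Phi(m,\eta)$ from the definition \eqref{m45} and using its unit-step monotonicity in $m$. The straddling-bond computation and the observation that $\scl,\scr\in\Z$ cannot coincide with $m\in\Z+\frac12$ are the right details to check, and both come out as you claim.
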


\begin{proof}
  Note that (a)
jumps due to the exclusion dynamics do not
 change $\scm(\cdot)$, (b)
   when the rightmost particle dies, $\scm(\cdot)$ decreases by 1 and (c)
   when the leftmost hole dies (and a particle appears in its place), $\scm(\cdot)$
   increases by~1. This shows \eqref{p1}. \qed
\end{proof}

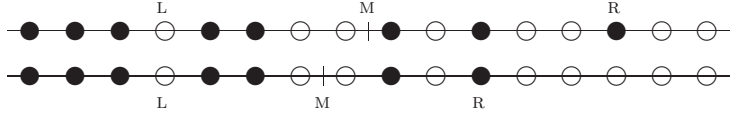
\begin{figure}
\setlength{\unitlength}{.6cm}
\begin{picture}(8,3)(-12,-3)
\linethickness{0.02mm}
\put(-8,-1){\line(1,0){16}}
\put(-7.5,-1){\circle*{.4}}
\put(-6.5,-1){\circle*{.4}}
\put(-5.5,-1){\circle*{.4}}
\put(-4.5,-1){\circle{.4}}
\put(-4.7,-.6){$\scl$}
\put(-3.5,-1){\circle*{.4}}
\put(-2.5,-1){\circle*{.4}}
\put(-1.5,-1){\circle{.4}}
\put(-.5,-1){\circle{.4}}
\put(0.5,-1){\circle*{.4}}
\put(0,-1.2){\line(0,1){.4}}
\put(-0.2,-.6){$\scm$}
\put(1.5,-1){\circle{.4}}
\put(2.5,-1){\circle*{.4}}
\put(3.5,-1){\circle{.4}}
\put(4.5,-1){\circle{.4}}
\put(5.5,-1){\circle*{.4}}
\put(5.3,-.6){$\scr$}
\put(6.5,-1){\circle{.4}}
\put(7.5,-1){\circle{.4}}

\put(-8,-2){\line(1,0){16}}
\put(-7.5,-2){\circle*{.4}}
\put(-6.5,-2){\circle*{.4}}
\put(-5.5,-2){\circle*{.4}}
\put(-4.5,-2){\circle{.4}}
\put(-4.7,-2.7){$\scl$}
\put(-3.5,-2){\circle*{.4}}
\put(-2.5,-2){\circle*{.4}}
\put(-1.5,-2){\circle{.4}}
\put(-.5,-2){\circle{.4}}
\put(0.5,-2){\circle*{.4}}
\put(-1,-2.2){\line(0,1){.4}}
\put(-1.2,-2.7){$\scm$}
\put(1.5,-2){\circle{.4}}
\put(2.5,-2){\circle*{.4}}
\put(2.3,-2.7){$\scr$}
\put(3.5,-2){\circle{.4}}
\put(4.5,-2){\circle{.4}}
\put(5.5,-2){\circle{.4}}
\put(6.5,-2){\circle{.4}}
\put(7.5,-2){\circle{.4}}
\end{picture}
\caption{The effect of killing the rightmost particle: the upperline is before
  the killing and the bottom line is after the killing. The position $\scr$ of the
  rightmost particle moves to the left by 3 (for this configuration) and $\scm$
  moves by 1 to the left (for any configuration). Analogously, the killing of
  the leftmost hole moves $\scm$ one unit to the right (not in the picture).}
\label{fig:figure3}
\end{figure}

\section{The process as seen from the median}
\label{sec:3}
Since $\scm(\eta_t)$ is a symmetric simple random walk the law of $\eta_t$ cannot
be tight but we show below that the process as seen from the median has a
unique invariant measure. Let \[\tilde\eta_t:= \theta_{\scm(\eta_t)-1/2}\eta_t,\]
where for $y\in \Z$, the translation $\theta_y:\cX\to\cX$ is the map
$(\theta_y\eta)(x) = \eta(x-y)$. Clearly $\tilde\eta_t\in\cX^0:=\{\eta\in \cX:
\scm(\eta)=1/2\}$ and we have that $(\tilde\eta_t)$ is a Markov process on
$\cX^0$ with generator
\begin{equation}
  \label{L0}
\tLpart= L_0+\tll+\tlr
\end{equation}
where for any $\eta\in \cX^0$,
\[
\tlr f(\eta) :=\scj\big(f(\theta_{-1}(\eta\setminus\{\scr(\eta)\}))-f(\eta)\big);\quad
\tll f(\eta):= \scj \big(f(\theta_1(\eta\cup\{\scl(\eta)\}))-f(\eta)]\big)
\]
Before stating the result we introduce some notation. For $\eta\in\cX$, define
\begin{equation}
\label{c1}
  N^{0}(\eta):=\sum_{x<\scr(\eta)}(1-\eta(x)),\qquad N^{1}(\eta):=\sum_{x>\scl(\eta)}\eta(x)
\end{equation}
the number of holes in $\eta$ to the
left of the rightmost particle  and the number of particles in $\eta$ to
the right of leftmost hole, respectively. Clearly
\begin{equation}
  \label{b17}
  N^{0}+N^{1}\;=\;\scr-\scl+1.
\end{equation}
Let $\eta^0\in\cX^0$ be the Heaviside configuration given by
\begin{equation}
  \label{b43}
  \eta^0(x) = \one\{x\le 0\}.
\end{equation}
Define $\psi$ on $\cX$ by
\begin{equation}
  \label{b41}
\psi(\eta):= \sum_{x<y}\; (1-\eta(x))\eta(y)
\end{equation}
Observe that $\psi(\eta)<\infty$ for all $\eta\in\cX$.
If $\eta\in\cX^0$, $\psi(\eta)$ is the number of jumps $10\to01$ needed to get
from $\eta^0$ to $\eta$. In particular, $\psi(\eta^0)=0$.
\medskip

\begin{theorem}
\label{thm1.2}
  For any $\scj>0$ the process $(\tilde\eta_t)$ has a unique invariant measure
  $\mu_{\scj}$ on $\cX^0$ and
  \begin{equation}
    \label{1.1}
  {\mu_{\scj}}[ \scr-\scl +1] = \frac1{2\scj}
  \end{equation}
\end{theorem}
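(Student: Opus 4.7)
The plan is to treat $(\tilde\eta_t)$ as an irreducible, non-explosive continuous-time Markov chain on the countable state space $\cX^0$, establish positive recurrence via a Foster--Lyapunov computation centered on the inversion-counting function $\psi$ of \eqref{b41}, and then read \eqref{1.1} off the stationary relation $\mu_\scj[\tLpart\psi]=0$.

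The central computation is the generator identity
\[
\tLpart\psi(\eta)\;=\;\tfrac12\;-\;\scj\bigl(\scr(\eta)-\scl(\eta)+1\bigr),\qquad\eta\in\cX^0.
\]
To verify it I would work term by term on $\tLpart=L_0+\tll+\tlr$. An SSEP swap of type $10\to01$ at a bond $(x,x+1)$ changes $\psi$ by exactly $+1$ (the pair $(x,x+1)$ gains an inversion, while the contributions from pairs $(z,x)$ and $(z,x+1)$ with $z\ne x,x+1$ cancel), and the reverse swap changes it by $-1$; so $L_0\psi=\tfrac12(N_{10}-N_{01})$ where $N_{10}$ and $N_{01}$ count the $10$- and $01$-interfaces of $\eta$. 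Since any $\eta\in\cX$ has value $1$ at $-\infty$ and $0$ at $+\infty$, one has $N_{10}-N_{01}=1$, hence $L_0\psi\equiv\tfrac12$. The death move removes the particle at $\scr$, destroying exactly the inversions $(x,\scr)$ with $\eta(x)=0$, of which there are $N^0(\eta)$; since $\psi$ is translation invariant this gives $\tlr\psi=-\scj N^0$. Symmetrically $\tll\psi=-\scj N^1$, and $N^0+N^1=\scr-\scl+1$ from \eqref{b17} yields the displayed formula.

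With this identity in hand I would first check irreducibility (from $\eta^0$ one reaches any target in $\cX^0$ by a finite sequence of SSEP swaps interleaved with a matching number of births and deaths) and non-explosion (via a standard graphical construction with independent Poisson bond clocks and rate-$\scj$ boundary clocks). Because $\scm=1/2$ pins any $\eta\in\cX^0$ with $\scr-\scl+1\le K$ to a window of width at most $K$ around the origin, the sublevel set $C_K:=\{\eta\in\cX^0:\scr-\scl+1\le K\}$ is finite. Choosing $K$ so that $\tfrac12-\scj K<0$, the identity gives $\tLpart\psi\le-\epsilon$ off $C_K$ for some $\epsilon>0$, with $\tLpart\psi$ bounded on the finite set $C_K$; Foster's criterion then yields positive recurrence. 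Standard CTMC theory produces the unique invariant probability $\mu_\scj$, and a Meyn--Tweedie type bound associated to the same Foster computation gives $\mu_\scj[\scr-\scl+1]<\infty$.

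To close the argument I would apply Dynkin's formula to $\psi$ along the stationary process and extract $\mu_\scj[\tLpart\psi]=0$, which combined with the identity above delivers \eqref{1.1}. The main technical point is precisely this last step: the natural object $\psi(\tilde\eta_t)-\psi(\tilde\eta_0)-\int_0^t\tLpart\psi(\tilde\eta_s)\,ds$ is only a local martingale and $\psi$ is a priori merely $\mu_\scj$-a.s.\ finite, so one cannot quote the identity $\mu_\scj[\tLpart\psi]=0$ directly. I would handle this either by a supplementary Foster argument with a quadratic function (showing $\mu_\scj[\psi]<\infty$ and then using Dynkin's formula in the usual way), or more robustly by dividing the identity $\psi(\tilde\eta_t)-\psi(\tilde\eta_0)=\int_0^t\tLpart\psi(\tilde\eta_s)\,ds+M_t$ by $t$: recurrence and the positive-recurrent renewal structure give $\psi(\tilde\eta_t)/t\to 0$ almost surely, Birkhoff's ergodic theorem identifies the limit of $\frac1t\int_0^t\tLpart\psi(\tilde\eta_s)\,ds$ as $\mu_\scj[\tLpart\psi]$, and a bound on the predictable quadratic variation of $M$ forces $M_t/t\to 0$.
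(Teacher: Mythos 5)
Your proposal is correct and follows essentially the same route as the paper: the same generator identity $\Lpart\psi=\tfrac12-\scj(\scr-\scl+1)$ for the inversion count $\psi$, the same observation that the sublevel sets of $\scr-\scl+1$ are finite in $\cX^0$, and the same resolution of the key integrability issue --- your option (a), a supplementary Lyapunov computation with a higher-order counting function to get $\mu_{\scj}\psi<\infty$ before reading off \eqref{1.1}, is exactly what the paper does with $\psi_2(\eta)=\sum_{x<y<z}(1-\eta(x))(1-\eta(y))\eta(z)$. The only differences are in packaging: the paper obtains existence of $\mu_{\scj}$ from tightness of the Ces\`aro averages $\frac1t\int_0^t\nu_s\,ds$ rather than by citing Foster--Meyn--Tweedie, and your alternative route (b) via Birkhoff plus martingale control is an extra option the paper does not need.
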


\medskip

\begin{proof} We show that $\psi$ is a Lyapunov function for $(\tilde\eta_t)$
  by computing
  $\Lpart\psi$.
 Since $\psi(\theta_x\eta)=\psi(\eta)$ for all $x$ and $\eta$,
\begin{equation}
\label{1.3}
(\tll+\tlr)\psi(\eta)  =(\ll+\lr)\psi(\eta)  =  - {\scj}(N^{0}(\eta) - N^{1}(\eta))
=  - {\scj} \Big(\scr(\eta)-\scl(\eta)+1\Big)
\end{equation}
the second identity holds because when the rightmost particle
disappears, $\psi$ decreases by $N^{0}$,
the number of holes to the left of $\scr$.
Analogously, when the leftmost hole disappears, $\psi$ decreases by  $N^{1}$, the number of
particles to the right of $\scl$.  The third identity follows from \eqref{b17}.

Since  $\psi$ increases by one when there is a transition $10\to 01$ while it decreases by one
due to the opposite transition:
\begin{equation}
\label{1.2}
L_0\psi(\eta)= \frac 12 \sum_{x} \{\eta(x)(1-\eta(x+1))-(1-\eta(x))\eta(x+1)\} = \frac 12,
\end{equation}
because for any configuration $\eta\in\cX$, the number of pairs $10$ exceeds by one
the number of pairs $01$. The sums in \eqref{1.2} are finite for $\eta\in\cX$.

Call $\nu_t$ the law of $\tilde \eta_t$ starting from a configuration $\eta\in
\cX^0$.
For any $t\ge 0$ we have
${\nu_t}\psi<\infty$ and
\begin{equation}
\label{1.3a}
\frac 1t({\nu_t}\psi-\nu_0\psi) = \frac1t\int_0^t \nu_s[\Lpart\psi] ds= \frac 12
- \frac {\scj}t \int_0^t {\nu_s}[\scr-\scl+1]\,ds.
\end{equation}
Hence, calling $\mu_t:=  \frac 1t \int_0^t  \nu_s\,ds$,
\begin{equation}
\label{1.4}
{\mu_t} [\scr-\scl+1] \le  \frac 1{2\scj}+ \frac 1{\scj t}{\nu_0}\psi.
\end{equation}
Since for any $c$ we have $\# \{\eta\in \cX^0:\scr-\scl+1\le c\} <\infty$, then the
family $\{\mu_t,t\ge0\}$ of probabilities on $\cX^0$ is tight on $\cX^0$ and it has
therefore a limit point $\mu$.  $\mu$ is stationary by construction and unique
because the process is irreducible.

Identity \eqref{1.1} follows from \eqref{1.3a} by replacing $\nu_t$ by $\mu$,
but we need first to show that $\mu\psi<\infty$. Introduce a new Lyapunov
function $\psi_2$ on $\cX$ by setting
\[
\psi_2(\eta):= \sum_{x<y<z}\; [1-\eta(x)][1-\eta(y)]\eta(z)
\]
Then,
\begin{eqnarray*}
L_0\psi_2(\eta) &=& \frac 12 \Big( \sum_{x<y}\; [1-\eta(x)]\eta(y)[1-\eta(y+1)] -
\sum_{x<y}\; [1-\eta(x)][1-\eta(y)]\eta(y+1)\Big)\\
&=& \frac 12 \sum_{x<y}\; [1-\eta(x)][\eta(y)-\eta(y+1)]\; =\;\frac 12
\sum_{x}\; [1-\eta(x)]\eta(x+1)
\end{eqnarray*}
and
\begin{eqnarray*}
\lr\psi_2(\eta) &=& -\scj    \sum_{x<y<\scr}\; [1-\eta(x)] [1-\eta(y)]\; =\; -\frac{\scj}2
N^{0}(\eta)[N^{0}(\eta)-1] \\
\ll\psi_2(\eta) &=& -\scj  \sum_{\scl<y< z}\; [1-\eta(y)] \eta(z)
\;=\;-\scj\sum_{y<z}\; [1-\eta(y)] \eta(z) + \scj  \sum_{\scl<z}\;   \eta(z) \\
&=&  -\scj\psi(\eta)+ \scj N^{1}(\eta)
\end{eqnarray*}
Let $\nu_t$ as defined before \eqref{1.3a} and $\mu_t$ after it.
Then ${\nu_t}\psi_2<\infty$ and since
$\sum_{x}\; [1-\eta(x)]\eta(x+1)\le N^{0}(\eta)$,
\[
\frac 1t\Big({\nu_t}\psi_2-{\nu_0}\psi_2\Big) \le - \frac {\scj}t \int_0^t
{\nu_s}\Big[-\frac {N^{0}}{2\scj} +\psi-N^{1}+
\frac{N^{0}}2(N^{0}-1)\Big]\,ds
\]
so that, since $N^{0} (N^{0}-1) \ge 0$,
\[
\mu_t\psi \le \frac 1{\scj t} {\nu_0}\psi_2 +   {\mu_t}\Big[ \frac{N^{0}}{2\scj}
+N^{1}\Big] \le \text{Constant},
\]
by \eqref{b17} and \eqref{1.4}. Then $\mu\psi<\infty$ and setting $\nu_0=\mu$
in \eqref{1.3a} we get \eqref{1.1}.
\qed
\end{proof}

\medskip
The theorem says that under the invariant measure the average distance
between the rightmost particle and the leftmost hole is $(2\scj)^{-1}$. This
is in agreement with the Fick's law (the analogue of the Fourier law for mass
densities).  In fact Fick's law states that the stationary current $J$ flowing
in a system of length $\ell$ when at the endpoints the densities are
$\rho_{\pm}$ is:
\[
J=- \frac 12\; \frac{\rho_+-\rho_-}\ell
\]
$1/2$ being the particle mobility. In our case $J=\scj$,
$\rho_+=1$ and $\rho_-=0$ hence $\ell=(2\scj)^{-1}$.  The validity
of Fick's law in our case is however not completely obvious as the endpoints
$\scr(\eta_t)$ and $\scl(\eta_t)$ depend on time.

\setcounter{equation}{0}

\section{The interface process}
\label{sec:4}

\noindent
{\em The interfaces.}
Let the set of \emph{vertices} be
\begin{equation}
  \label{v1}
 \cV:=\{v= (v_1,v_2)\in \Z\times\Z^+ \text{ with
}v_1+v_2\text{ even}\}.
\end{equation}
and for a vertex $v=(v_1,v_2)\in\cV$ let $V_v:\Z\to\Z$ be the \emph{cone} with
vertex $v$ defined by
\begin{equation}
\label{2.1.0}
V_v(x):= |x-v_1| + v_2.
\end{equation}
Define the space of \emph{interfaces} as
\begin{eqnarray}
\nonumber
\cY_v&:=&\Big\{\xi\in\Z^\Z\,:\,|\xi(x)-\xi(x+1)|=1,\;x\in \Z;\;\#\{x:\xi(x) \neq
V_v(x)\}<\infty\}\\
\cY &:=& \cup_{v\in\cV} \cY_v
	\label{2.1}
\end{eqnarray}
That is, an interface in $\cY_v$ coincides with the cone $V_v$ for all but a
finite number of sites.  Interfaces share the property ``$x+\xi(x)$ even'' which
is conserved by the dynamics defined later.
For an interface $\xi\in \cY$ define
\begin{eqnarray}
  \label{b22a}
  \scl(\xi)&:=& \sup\{x\in\Z\,:\,\xi(x-y)=\xi(x) +y\;\text{ for all $y\ge 0$}\}\\ \label{b22b}
\scr(\xi)&:=& \inf\{x\in\Z\,:\,\xi(x+y)=\xi(x) +y\;\text{ for all $y\ge 0$}\}.
\end{eqnarray}
%
%
Any interface $\xi$ coincides with a cone $V_v$ outside the finite interval
$(\scl(\xi),\scr(\xi))$. The vertex $v=v(\xi)$ is the following function of
$\scr=\scr(\xi))$, $\scl=\scl(\xi)$, $\xi(\scl)$ and $\xi(\scr)$:
\begin{equation}
  \label{2.2}
v_1= v_1(\xi) :=\frac{\xi(\scl)-\xi(\scr)}{2}+ \frac{\scl+\scr}2,\;\;
v_2= v_2(\xi):= \frac{\xi(\scl)+\xi(\scr)}{2}+ \frac{\scl-\scr}2.
\end{equation}

\emph{Correspondence between interfaces and particle
configurations}. Given an interface $\xi$ we say that there is a particle
at $x$ if $\xi(x+1)<\xi(x)$ and that there is a hole at $x$ if $\xi(x+1)>\xi(x)$.
This defines the map $D:\cY\to \cX$ given by
\begin{equation}
  \label{2.2.1}
\eta(x)\equiv
\dD(\xi)(x) =  \frac 12 - \frac{\xi(x+1)-\xi(x)}2
\end{equation}
The map is clearly surjective but it is not injective  as $\dD$ is invariant
under uniform vertical shifts: $\dD(\xi+n) = \dD(\xi)$ for all $n\in \mathbb
Z$. However the extremes of the non conic part of $\xi$ correspond to the leftmost
hole and the rightmost particle of $\dD(\xi)$ and the absise of the vertex of the
cone containing $\xi$ corresponds to the median of $\dD(\xi)$. More precisely:

\vskip.5cm

\begin{lemma}
\label{lemma3}
 For any $\xi\in \cY$,
\[
 \scl(\dD(\xi))=\scl(\xi), \quad \scr(\dD(\xi))=\scr(\xi)-1,\quad \scm(\dD(\xi))=v_1(\xi)-1/2.
\]

\end{lemma}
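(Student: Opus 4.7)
My plan is to verify the three equalities separately, using the local structure of $\xi$ near its ``shoulders'' $\scl(\xi)$ and $\scr(\xi)$ for the first two, and a telescoping count of up/down steps for the third. Write $\eta = \dD(\xi)$, $L = \scl(\xi)$, $R = \scr(\xi)$, $v = v(\xi)$. The set defining $L$ in \eqref{b22a} is nonempty (every $x$ far to the left lies in it, since $\xi$ agrees with the left arm of $V_v$ there) and bounded above, so the supremum $L \in \Z$ is attained. The defining property of $L$ gives $\xi(L-y+1)-\xi(L-y) = -1$ for every $y \ge 1$, so the rule $\dD$ places a particle at every $x < L$ and no hole lies to the left of $L$. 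Because $L+1$ is not in the set, the condition $\xi(L+1-y) = \xi(L+1) + y$ must fail for some $y \ge 1$; but the cases $y \ge 2$ are automatic once $y=1$ holds (setting $y' = y-1$ and using the slope property already known at $L$), so the failure must occur at $y=1$. Combined with $|\xi(L+1)-\xi(L)| = 1$ this forces $\xi(L+1) = \xi(L)+1$, i.e.\ a hole at $L$, whence $\scl(\eta) = L$. The symmetric argument at $R$ shows $\xi(x+1)-\xi(x) = +1$ for every $x \ge R$ while $\xi(R) - \xi(R-1) = 1$, placing only holes at $x \ge R$ and a particle at $R-1$, so $\scr(\eta) = R - 1$.

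For the median equality, set $m = v_1 - 1/2$ and let $U_{[a,b]}$ and $D_{[a,b]}$ be the numbers of up- and down-steps of $\xi$ on the integer interval $[a,b]$. They obey $U_{[a,b]} + D_{[a,b]} = b - a$ and $U_{[a,b]} - D_{[a,b]} = \xi(b) - \xi(a)$. Choosing $N$ large enough that $\xi$ agrees with $V_v$ at $\pm N$ yields $\xi(N) = N - v_1 + v_2$ and $\xi(-N) = N + v_1 + v_2$, whence a direct substitution gives
\[
D_{[v_1, N]} \;=\; U_{[-N, v_1]} \;=\; \frac{\xi(v_1) - v_2}{2}.
\]
Passing to the limit $N \to \infty$, the two left-hand sides become $\sum_{x \ge v_1} \eta(x)$ and $\sum_{x \le v_1 - 1}(1 - \eta(x))$ respectively, so these finite sums coincide; this is exactly the defining relation \eqref{m45} with $m = v_1 - 1/2$, and the uniqueness of the median observed there gives $\scm(\eta) = v_1 - 1/2$.

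The only real calculation is the telescoping count in the second paragraph; the first two equalities essentially read off the local slopes of $\xi$ at $L$ and $R$. No genuine obstacle is anticipated, though one should be careful in the argument ruling out failure of the slope condition at $y \ge 2$, which is what makes the ``one-step'' characterization of the shoulders work.
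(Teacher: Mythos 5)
Your proof is correct and, for the median identity, takes a genuinely different route from the paper's. The paper verifies $\scm(\dD(\xi))=v_1(\xi)-\frac12$ first for the cone $V_{(a,b)}$ itself and then propagates it to all of $\cY_{(a,b)}$ by noting that every interface in the cone is reached from $V_{(a,b)}$ by finitely many local flips \eqref{a1}, each of which corresponds to a particle exchange \eqref{a2} and therefore changes neither $v(\xi)$ nor the median. Your telescoping count of up- and down-steps instead evaluates both sides of the median condition \eqref{m45} directly, showing each equals $\frac{\xi(v_1)-v_2}{2}$; this is more self-contained and bypasses the (easy but unproved in the paper) claim that local flips generate all of $\cY_{(a,b)}$ from the cone. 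For the first two identities the paper simply asserts that they follow from the definitions, whereas you supply the full argument, including the useful observation that failure of membership in the set \eqref{b22a} at $\scl(\xi)+1$ must already occur at $y=1$, which pins down the slope there. One small slip: at the right shoulder you wrote $\xi(\scr)-\xi(\scr-1)=1$, which by \eqref{2.2.1} would put a hole at $\scr-1$; the symmetric argument in fact gives $\xi(\scr)-\xi(\scr-1)=-1$, which is what you then (correctly) use to place the rightmost particle at $\scr-1$. This is a sign typo, not a gap.
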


\begin{proof}
  The first and second identities follows directly from the definitions. The
  third identity is trivially satisfied by any cone and its mapped particle
  configuration: $v_1(V_{(a,b)})=a$ and $\scm(\dD(V_{(a,b)}))=a-\frac12$. Any
  interface $\xi$ in the cone $V_{(a,b)}$ can be attained from $V_{(a,b)}$
  by a finite number of moves of the type:
\begin{equation}
  \label{a1}
  \xi=( \dots,z,z-1,z,\dots) \to (\dots,z,z+1,z,\dots)=\xi'
\end{equation}
The corresponding particle moves are
\begin{equation}
  \label{a2}
\eta=( \dots,1,0,\dots) \to (\dots,0,1,\dots)=\eta'
\end{equation}
and
\begin{equation}
  \label{a3}
  \dD(\xi)=\eta\quad\text{ if and only if }\quad \dD(\xi')=\eta'.
\end{equation}
Observing that $v(\xi)=v(\xi')$ and that $\scm(\dD(\xi))=\scm(\dD(\xi'))$,
we conclude that $\dD(\xi)$ has median $a-\frac12=v_1(\xi)$ for all $\xi\in \cY_{(a,b)}$.
\qed
\end{proof}

\vskip.5cm

\noindent
{\em Interface dynamics.} Let $(\xi_t)$ be the Markov process on $\cY$ with generator
\[
 \Linter:= \Lssep+ \Lr + \Ll,\quad \text{ with}
 \]
 \begin{equation}
   \label{m9}
   \Lssep f(\xi):=  \frac12\sum_{x\in\Z}\{f(\xi +\Delta_x\xi) -f(\xi)\},
 \end{equation}
\[\Lr f(\xi):= \scj \big[f(\max\{ \xi,V_{v(\xi)+(-1,1)}\} )-f(\xi)\big],\quad
 \Ll f(\xi):= \scj \big[f(\max\{\xi, V_{v(\xi)+(1,1)}\} )-f(\xi) \big],
\]
where $\Delta_x\xi(y):= (\xi(x+1)+\xi(x-1)-2\xi(x))\,\one\{y=x\}$.
The jumps of $\xi(x)$ due to $\Lssep$ occur only when $\xi(x)$ has the two
neighbors at equal height. The jump is up by 2 if the neighboring heights are
both above $\xi(x)$ and down by 2 if they are both below $\xi(x)$.  $\Lr$ acts
by changing the rightmost downward variation of $\xi$ into an upward one with
the interface to its right being a straight line with slope 1.  The cone
containing the updated interface is obtained from the previous cone by a
translation up by 1 and left by 1; see Figure \ref{fig:655}. A symmetrical
picture describes the action of $ \Ll$.  We denote by
\begin{equation}
  \label{2.4}
A_t=\#(\text{jumps due to $\Ll$ in $[0,t]$});\;\;\;\;B_t=\#(\text{jumps due to $\Lr$ in $[0,t]$})
\end{equation}
$A=(A_t)$ and $B=(B_t)$ are independent Poisson processes of intensity $\scj$.

The interface evolution induces via the map $\dD$ the particle evolution
described in Section \ref{sec:2}. \paf{Let $(\xi_t)$ be the interface process with
generator $\Linter$ and starting interface $\xi$ and define
\begin{equation}
  \label{pp1}
  \eta_t:=\dD(\xi_t)
\end{equation}}

\paf{
\begin{lemma}
\label{lemma4}
The particle process $(\eta_t)$ defined by \eqref{pp1} is Markov with generator
$\Lpart$, defined in \eqref{L00}. Moreover, if $\xi_0\in \cY_{(0,0)}$, then
\begin{equation}
  \label{2.5}
\xi_t(0)= 2 B_t + 2 \sum_{x\ge 0} \eta_t(x)
\end{equation}
\end{lemma}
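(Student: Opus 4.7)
The lemma has two claims; I would tackle them separately.

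For the Markov/generator statement, the plan is to verify the intertwining identity $\Linter(f\circ\dD)=(\Lpart f)\circ\dD$ for bounded $f:\cX\to\R$, from which the first assertion follows by the standard characterization of a Markov process by its generator. I would check this term-by-term matching the three summands of $\Linter$ with those of $\Lpart$. For $\Lssep$: the move $\xi\to\xi+\Delta_x\xi$ is only nontrivial when $\xi(x-1)=\xi(x+1)$; plugging the definition $\eta(y)=\tfrac12-\tfrac12(\xi(y+1)-\xi(y))$ into both possibilities shows this is precisely the SSEP swap of $\eta(x-1)$ and $\eta(x)$, and the rate $\tfrac12$ per site $x$ matches $L_0$. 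For $\Lr$: one must check that $\dD(\max\{\xi,V_{v(\xi)+(-1,1)}\})=\dD(\xi)\setminus\{\scr(\dD(\xi))\}$. The key point is that $V_{v+(-1,1)}$ equals $V_v$ on $(-\infty,v_1-1]$ and equals $V_v+2$ on $[v_1-1,\infty)$; since on the conic right arm $\xi\equiv V_v$, taking the max lifts the right arm by $2$, which flips the rightmost downward step of $\xi$ (at position $\scr(\xi)-1$) into an upward step while leaving all other slopes untouched. By Lemma \ref{lemma3} this is exactly the removal of the rightmost particle, and the rate $\scj$ matches $\lr$. The case of $\Ll$ is symmetric.

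For the identity \eqref{2.5}, set $S_t:=\sum_{x\ge 0}\eta_t(x)$, which is finite because $\eta_t\in\cX$. Using the definition of $\dD$ telescopically,
\begin{equation*}
\xi_t(N+1)-\xi_t(0)=\sum_{x=0}^{N}(\xi_t(x+1)-\xi_t(x))=(N+1)-2\sum_{x=0}^{N}\eta_t(x).
\end{equation*}
Taking $N$ so large that $N+1\ge \scr(\xi_t)$, we have $\xi_t(N+1)=V_{v(\xi_t)}(N+1)=N+1-v_1(\xi_t)+v_2(\xi_t)$ and $\sum_{x=0}^{N}\eta_t(x)=S_t$, giving
\begin{equation*}
\xi_t(0)=2S_t+\bigl(v_2(\xi_t)-v_1(\xi_t)\bigr).
\end{equation*}

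It remains to show $v_2(\xi_t)-v_1(\xi_t)=2B_t$. The difference $v_2(\xi)-v_1(\xi)$ is precisely the $y$-intercept of the right-going slope-$1$ arm of $\xi$, i.e.\ $\xi(x)-x$ for any $x\ge\scr(\xi)$; this is an asymptotic quantity of the right tail. Jumps of $\Lssep$ change $\xi$ only locally, hence preserve this value. Jumps of $\Ll$ use $V_{v+(1,1)}$, which coincides with $V_v$ on $[v_1+1,\infty)$, so the right arm is unchanged and $v_2-v_1$ is preserved. Jumps of $\Lr$ use $V_{v+(-1,1)}$, which equals $V_v+2$ on the right arm, so $v_2-v_1$ increases by exactly $2$. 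Since $\xi_0\in\cY_{(0,0)}$ gives $v_2(\xi_0)-v_1(\xi_0)=0=2B_0$, induction on the successive jump times yields $v_2(\xi_t)-v_1(\xi_t)=2B_t$ for all $t\ge 0$, and combining with the previous display gives \eqref{2.5}.

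The only nontrivial step is the particle-level interpretation of the $\max$ operations defining $\Lr$ and $\Ll$; everything else is bookkeeping. I expect this to be the main obstacle, in part because one must verify the tacit monotonicity $\xi\ge V_{v(\xi)}$ that makes the max coincide with the old $\xi$ on the portion of the interface that is supposed to stay put, and in part because one needs to confirm that the new cone $V_{v(\xi)+(-1,1)}$ is indeed the cone of the updated interface (so that iterating the argument is consistent). Once that correspondence is in hand, both assertions of the lemma follow cleanly.
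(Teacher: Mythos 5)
Your proposal is correct and follows essentially the same route as the paper: the generator statement is checked directly from the definitions (the paper simply asserts this), and the identity \eqref{2.5} is obtained by telescoping $\xi_t(x+1)-\xi_t(x)=1-2\eta_t(x)$ out to the conic right arm, whose vertical offset $v_2-v_1$ equals $2B_t$. The paper reads this offset off from \eqref{2.2} together with $v(\xi_t)=v(\xi_0)+(A_t-B_t,A_t+B_t)$, whereas you re-derive it by tracking the effect of each jump type on the right arm; the two computations are the same in substance.
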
}
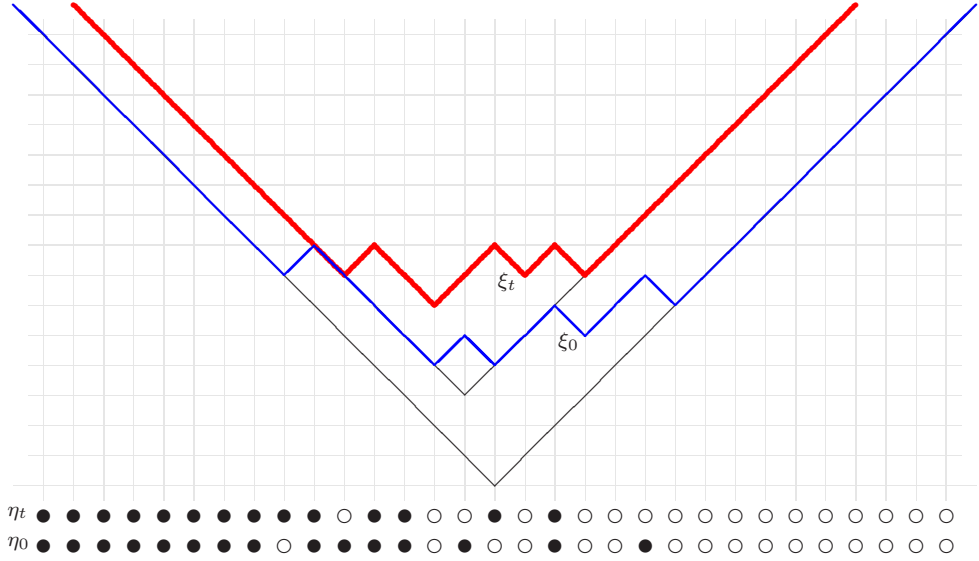
\begin{figure}
\setlength{\unitlength}{.4cm}
\begin{picture}(18,20)(-18,-2.5)
\linethickness{0.02mm}
{\color{grey}
\put(0,-.5){\line(0,1){16}}
\put(-1,-.50){\line(0,1){16}}
\put(-2,-.50){\line(0,1){16}}
\put(-3,-.50){\line(0,1){16}}
\put(-4,-.50){\line(0,1){16}}
\put(-5,-.50){\line(0,1){16}}
\put(-6,-.50){\line(0,1){16}}
\put(-7,-.50){\line(0,1){16}}
\put(-8,-.50){\line(0,1){16}}
\put(-9,-.50){\line(0,1){16}}
\put(-10,-.50){\line(0,1){16}}
\put(-11,-.50){\line(0,1){16}}
\put(-12,-.50){\line(0,1){16}}
\put(-13,-.50){\line(0,1){16}}
\put(-14,-.50){\line(0,1){16}}
\put(-15,-.50){\line(0,1){16}}
\put(1,-.50){\line(0,1){16}}
\put(2,-.50){\line(0,1){16}}
\put(3,-.50){\line(0,1){16}}
\put(4,-.50){\line(0,1){16}}
\put(5,-.50){\line(0,1){16}}
\put(6,-.50){\line(0,1){16}}
\put(7,-.50){\line(0,1){16}}
\put(8,-.50){\line(0,1){16}}
\put(9,-.50){\line(0,1){16}}
\put(10,-.50){\line(0,1){16}}
\put(11,-.50){\line(0,1){16}}
\put(12,-.50){\line(0,1){16}}
\put(13,-.50){\line(0,1){16}}
\put(14,-.50){\line(0,1){16}}
\put(15,-.50){\line(0,1){16}}
\put(-15.5,1){\line(1,0){31}}
\put(-15.5,2){\line(1,0){31}}
\put(-15.5,3){\line(1,0){31}}
\put(-15.5,4){\line(1,0){31}}
\put(-15.5,5){\line(1,0){31}}
\put(-15.5,6){\line(1,0){31}}
\put(-15.5,7){\line(1,0){31}}
\put(-15.5,8){\line(1,0){31}}
\put(-15.5,9){\line(1,0){31}}
\put(-15.5,10){\line(1,0){31}}
\put(-15.5,11){\line(1,0){31}}
\put(-15.5,12){\line(1,0){31}}
\put(-15.5,13){\line(1,0){31}}
\put(-15.5,14){\line(1,0){31}}
\put(-15.5,15){\line(1,0){31}}
\put(-16,0){\line(1,0){32}}
}
%
\put(-1,3){\line(1,1){13}}
\put(-1,3){\line(-1,1){13}}
%
%
{\color{red}
\linethickness{.6mm}
\curve(-5,7,-14,16)
\curve(-5,7,-4,8)
\curve(-4,8,-3,7)
\curve(-3,7,-2,6)
\curve(-2,6,-1,7)
\curve(-1,7,0,8)
\curve(0,8,1,7)
\curve(1,7,2,8)
\curve(2,8,3,7)
\curve(3,7,12,16)}
%
%
\put(0,0){\line(-1,1){16}}  
\put(0,0){\line(1,1){16}}  
%
{\color{blue}
{\thicklines
\put(-1,5){\line(1,-1){1}}
\put(-7,7){\line(-1,1){9}}
\put(-7,7){\line(1,1){1}}
\put(-6,8){\line(1,-1){1}}
\put(-5,7){\line(1,-1){1}}
\put(-4,6){\line(1,-1){1}}
\put(-3,5){\line(1,-1){1}}
\put(-2,4){\line(1,1){1}}
\put(-1,5){\line(1,-1){1}}
\put(0,4){\line(1,1){1}}
\put(1,5){\line(1,1){1}}
\put(2,6){\line(1,-1){1}}
\put(3,5){\line(1,1){1}}
\put(4,6){\line(1,1){1}}
\put(5,7){\line(1,-1){1}}
\put(6,6){\line(1,1){10}}
}}
\put(2.1,4.6){$\xi_0$}
\put(.1,6.6){$\xi_t$}
%
%
\put(-16.2,-1){$\eta_t$}
\put(-15,-1){\circle*{.4}}
\put(-14,-1){\circle*{.4}}
\put(-13,-1){\circle*{.4}}
\put(-12,-1){\circle*{.4}}
\put(-11,-1){\circle*{.4}}
\put(-10,-1){\circle*{.4}}
\put(-9,-1){\circle*{.4}}
\put(-8,-1){\circle*{.4}}
\put(-7,-1){\circle*{.4}}
\put(-6,-1){\circle*{.4}}
\put(-5,-1){\circle{.4}}
\put(-4,-1){\circle*{.4}}
\put(-3,-1){\circle*{.4}}
\put(-2,-1){\circle{.4}}
\put(-1,-1){\circle{.4}}
\put(0,-1){\circle*{.4}}
\put(1,-1){\circle{.4}}
\put(2,-1){\circle*{.4}}
\put(3,-1){\circle{.4}}
\put(4,-1){\circle{.4}}
\put(5,-1){\circle{.4}}
\put(6,-1){\circle{.4}}
\put(7,-1){\circle{.4}}
\put(8,-1){\circle{.4}}
\put(9,-1){\circle{.4}}
\put(10,-1){\circle{.4}}
\put(11,-1){\circle{.4}}
\put(12,-1){\circle{.4}}
\put(13,-1){\circle{.4}}
\put(14,-1){\circle{.4}}
\put(15,-1){\circle{.4}}
%
%
\put(-16.2,-2){$\eta_0$}
\put(-15,-2){\circle*{.4}}
\put(-14,-2){\circle*{.4}}
\put(-13,-2){\circle*{.4}}
\put(-12,-2){\circle*{.4}}
\put(-11,-2){\circle*{.4}}
\put(-10,-2){\circle*{.4}}
\put(-9,-2){\circle*{.4}}
\put(-8,-2){\circle*{.4}}
\put(-7,-2){\circle{.4}}
\put(-6,-2){\circle*{.4}}
\put(-5,-2){\circle*{.4}}
\put(-4,-2){\circle*{.4}}
\put(-3,-2){\circle*{.4}}
\put(-2,-2){\circle{.4}}
\put(-1,-2){\circle*{.4}}
\put(0,-2){\circle{.4}}
\put(1,-2){\circle{.4}}
\put(2,-2){\circle*{.4}}
\put(3,-2){\circle{.4}}
\put(4,-2){\circle{.4}}
\put(5,-2){\circle*{.4}}
\put(6,-2){\circle{.4}}
\put(7,-2){\circle{.4}}
\put(8,-2){\circle{.4}}
\put(9,-2){\circle{.4}}
\put(10,-2){\circle{.4}}
\put(11,-2){\circle{.4}}
\put(12,-2){\circle{.4}}
\put(13,-2){\circle{.4}}
\put(14,-2){\circle{.4}}
\put(15,-2){\circle{.4}}
\end{picture}
\caption{The thick red line represents the interface $\xi_t$; its corresponding
  particle configuration is $\eta_t=D(\xi_t)$. The narrow blue line represents
  $\xi_0$, the interface at time $0$; $\eta_0=D(\xi_0)$. Two particles and one
  hole have been killed and several particles have moved due to the exclusion
  dynamics; in particular the third particle moved to the place originally
  occupied by the second one and the second hole has moved three units to the
  left. Due to the killings the vertex of the cone containing $\xi_t$ has
  moved by $2(-1,1)+1(1,1)$ which amounts to three units up and one unit left so
  that $\xi_0\in\cY_{(0,0)}$ while $\xi_t\in\cY_{(-1,3)}$.}
\label{fig:655}
\end{figure}


\begin{proof}
  It just follows from the definitions of $(\xi_t)$ and $\dD$ that $(\dD(\xi_t))$ is
  Markov with generator $\Lpart$.  Calling $\scr\equiv
  \scr(\xi_t)=\scr(\eta_t)+1$ we get from
  \eqref{2.2.1}
\begin{equation*}
\xi_t(0)=\xi_t(\scr)-\scr +2\sum_{x\ge 0}\eta_t(x)
\end{equation*}
From \eqref{2.2}, $\xi_t(\scr)=\scr -(v_1-v_2)$ and since $v_1=A_t-B_t$ and $v_2=A_t+B_t$,
we get \eqref{2.5}. \qed
\end{proof}

\paragraph{Harris graphical construction} We
construct explicitly the interface process $(\xi_t)$ as a function of the initial
interface and of the  Poisson processes governing the different jumps.

We construct first the process with generator $\Lssep$ and later use it to
define the process with the moving boundary conditions.  The probability space
$( \Om,P)$ is the product of independent rate-$\frac12$ Poisson processes on $\mathbb R_+$
indexed by $\Z\times\{\uparrow,\downarrow\}$. A typical
element of $\Omega$ is $\omega = (\omega^\uparrow_x,\, \omega^\downarrow_x$,
$x\in \Z$). The Poisson points in $\omega^\uparrow_x$ and respectively
$\omega^\downarrow_x$ are called up-arrows and down-arrows, respectively.

We define operators $T_t:\Omega\times\cY\to\cY$ with $t\ge 0$ where
$(T_t(\omega,\xi))$ is the process with initial interface $\xi$ using the
arrows of $\omega$, as follows. We drop the dependence on $\omega$ and write
just $T_t\xi$, instead.  We can take $\om$ such that at most one arrow occurs at
any given time.

Set $T_0\xi=\xi$. Assume that $\xi_{t'}:=T_{t'}\xi$ is defined for all
$t'\in[0,s]$. 
Let $t$ be the first arrow after $s$ belonging to
$\omega^\uparrow_x\cup \omega^\downarrow_x$ for some $x$ such that
$\xi_s(x+1)=\xi_s(x-1)$. Since $\xi\in\cY$, there are a finite number of such
$x$ and $t-s>0$ a.s. These are the arrows \emph{involved} in the evolution at
time $s$.

Set
\[T_{t'}\xi= T_s\xi \text{ for }t'\in [s,t)\] and
(1) If $t$ is an up-arrow, then the interface at $x$ is set to
$\xi_t(x-1)+1$ no matter its value at $t-$ and does not change at the other
sites:
\[
T_t\xi(x):=T_{t-}\xi(x-1)+1;
\quad T_t\xi(y) :=T_{t-}\xi(y),\;y\neq x.
\]
(2) Analogously, if $t$ is a down-arrow,
\[
T_t\xi(x):=T_{t-}\xi(x-1)-1;
\quad T_t\xi(y) :=T_{t-}\xi(y),\;y\neq x.
\]
The reader can show that the process $T_t\xi$ so defined is Markov and evolves
with the generator $\Lssep$ with initial interface
$\xi$ at time $t=0$.

In the next definition we need to use the operator $T_t$ in different time
intervals. With this in mind we define
\begin{equation}
\label{e15}
T_{[s,t]}(\omega,\xi):=
T_{t-s}(\theta_{-s}\omega,\xi),
\end{equation}
where $\theta_s\omega$ is the translation by $s$ of the arrows in $\omega$.
That is, $T_{[s,t]}$ has the same distribution as $T_{t-s}$ but uses the arrows
in $\omega$ belonging to the interval $[s,t]$. We drop the dependence on
$\omega$ in the notation and write simply $T_{[s,t]}\xi$.

\paragraph{Generalizing the boundary conditions} Consider the partial order in
the vertex space $\cV$ given by
\begin{equation}
  \label{o20}
v\le v'\text{ if }V_{v}(x)\le
V_{v'}(x)\text{  for all }x\in\Z,
\end{equation}
so that vertex order corresponds to cone order.  Let $\zr=(\zr_t)$ with
$\zr_t\in\cV$ be a non decreasing path of vertices with finite number of finite
jumps in finite time intervals: $\|\zr_{t}-\zr_{t'}\|<\infty$ for all
$t<t'$. Let $T_{[s,t]}$ be the family of random operators governing the $\Lssep$
motion, defined in \eqref{e15}. Typically $\zr$ will be a function of the
Poisson processes $A$ and $B$ which are independent of the arrows $\omega$
used to define the operators $T_{[s,t]}$. We abuse notation and call $P$ the
probability associated to $\omega$, $A$ and $B$.

Let $0= s_0< s_1<\dots$ be the times of jumps of $\zr$. Define iteratively
$T^\zr_t\xi:=\xi$ for $t<0$ and
\begin{equation}
  \label{n12}
T^\zr_t\xi := \begin{cases}\max\{T^\zr_{t-}\xi,V_{\zr_t}\}&\text{if $t=s_n$},\\
T_{[s_n,t]}T^\zr_{s_n}\xi&\text{if $t\in(s_n,s_{n+1})$}.
\end{cases}
\end{equation}
So $(\xi^\zr_t)$ evolves with $\Lssep$ in the intervals $(s_n,s_{n+1})$  and at
times $s_n$ is updated to the maximum between the interface at time
$s_n-$ and the cone with vertex $\zr_{s_n}$.

Given an initial interface $\xi\in \cY$ and a number $\delta>0$, we will
consider the following choices for $\zr_t$, denoted by $O_t$, $R_t$ and
$\zr^{\delta,-}_t$ and $\zr^{\delta,+}_t$:
\begin{eqnarray}
O_t &:=& v(\xi),\quad \text{ for all }t;\label{o1t}\\
R_t&:=& v(\xi) +(A_t-B_t, A_t+B_t) \label{r1t00}\\
\zr^{\delta,-}_t&:=& R_{n\delta}, \qquad \zr^{\delta,+}_t\;:=\;
R_{(n+1)\delta}, \qquad t\in [n\delta,(n+1)\delta),\;\; n\ge 0.\label{d1t}
\end{eqnarray}
When the path is $(O_t)$, the cone does not move and the resulting process has
generator $\Lssep$. When the path is $(R_t)$ the process has generator $\Linter$.
The process with path $(\zr^{\delta,-}_t)$ records the increments of $(R_t)$ in the
intervals $[n\delta,(n+1)\delta)$ and takes the maximum of the interface at
the end of this interval and the cone with vertex $R_{n\delta}$. The process
with path $(\zr^{\delta,+}_t)$ records the increments of $(R_t)$ in the intervals
$[n\delta,(n+1)\delta)$ but takes the maximum between the interface at the
beginning of this interval and the cone with center $R_{(n+1)\delta}$. The
processes with paths \eqref{d1t} will be used later.

\paragraph{Monotonicity and attractivity}
Consider the
natural partial order in $\cY$ given by: $\xi\le \xi'$ if and only if
$\xi(x)\le\xi'(x)$ for all $x\in\Z$.  Use this order to define stochastic order
for random interfaces in $\cY$. \paf{If $\xi,\xi'$ are random, $\xi$ is stochastically
dominated by $\xi'$  if $Ef(\xi)\le Ef(\xi')$ for non decreasing
$f:\cY\to\R$. This is equivalent to the existence of a coupling
$(\hat\xi,\hat\xi')$ whose marginals have the same distribution as $\xi$ and
$\xi'$, respectively, such that $P(\hat\xi\le \hat\xi') =1$.}

Let $(\xi^1_t)$ and $(\xi^2_t)$ be two realizations of a stochastic process $(\xi_t)$ on $\cY$
with initial random interfaces $\xi^1$ and $\xi^2$, respectively. We say
that the process $(\xi_t)$ is \emph{attractive} if the following holds:
\begin{equation}
  \label{t1}
  \text{If } \xi^1\le \xi^2 \text{ stochastically, then } \xi^1_t\le \xi^2_t \text{
    stochastically, for
    all }t\ge 0.
\end{equation}
\begin{lemma}
  \label{t6}
  If $\xi\le\xi'$ then $T_t\xi\le T_t\xi'$ almost surely. As a consequence, the
  process with generator $\Lssep$ is attractive.
\end{lemma}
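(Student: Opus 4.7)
The plan is to prove the almost sure pointwise monotonicity directly from the Harris graphical construction, and then derive attractivity as an immediate corollary.

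First I would fix the same realization $\omega$ of the Poisson arrows used to build $T_t\xi$ and $T_t\xi'$, so that the coupling is simply the one given by the simultaneous graphical construction. Since almost surely only finitely many arrows involved in either interface fall in any compact time window (both $\xi$ and $\xi'$ agree with a cone outside a finite set), it suffices to show that each individual arrow update preserves the pointwise order. That is, writing $\xi_-,\xi'_-$ for the two interfaces just before an arrow at time $t$ at site $x$, if $\xi_-\le\xi'_-$ then after the (possibly void) update one still has $\xi_+\le\xi'_+$.

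Consider an up-arrow at $x$ (down-arrows are symmetric). Only the value at $x$ can change. The recipe fires for $\xi_-$ iff $\xi_-(x-1)=\xi_-(x+1)$, in which case $\xi_+(x)=\xi_-(x-1)+1$, and similarly for $\xi'$. I would split into the four cases according to which of the two interfaces actually updates. If both update, monotonicity is immediate since $\xi_-(x-1)\le\xi'_-(x-1)$. If neither updates there is nothing to check. The only delicate case is when $\xi_-$ updates but $\xi'_-$ does not: one must check $\xi_-(x-1)+1\le\xi'_-(x)$. Here the key fact is the parity constraint $x+\xi(x)$ even shared by all interfaces in $\cY$, which forces $\xi_-(x-1)$ and $\xi'_-(x\pm1)$ to have the same parity at each site; combined with $|\xi'_-(x)-\xi'_-(x\pm1)|=1$ and $\xi_-(x-1)\le\xi'_-(x-1)$, $\xi_-(x+1)=\xi_-(x-1)\le\xi'_-(x+1)$, one gets $\xi_-(x-1)\le\min\{\xi'_-(x-1),\xi'_-(x+1)\}=\xi'_-(x)-1$, which is the desired bound. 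The symmetric case where only $\xi'_-$ updates is trivial because $\xi'_+(x)=\xi'_-(x-1)+1\ge\xi'_-(x)\ge\xi_-(x)=\xi_+(x)$. This case analysis is the main (though elementary) obstacle; it is exactly the mechanism by which the $\Lssep$ dynamics, although defined via a threshold condition, still respects the coupling.

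Iterating over the successive arrows involved in the two interfaces, one concludes $T_t\xi\le T_t\xi'$ pointwise for all $t\ge0$, almost surely. To deduce attractivity, given stochastically ordered random initial interfaces $\xi^1\le\xi^2$, one first picks (by Strassen) a coupling $(\hat\xi^1,\hat\xi^2)$ of their laws with $\hat\xi^1\le\hat\xi^2$ a.s., then independently samples the Poisson arrows $\omega$ and runs $T_t$ from both initial conditions. By the pointwise statement just proved, $T_t\hat\xi^1\le T_t\hat\xi^2$ a.s., so $E f(T_t\xi^1)=E f(T_t\hat\xi^1)\le E f(T_t\hat\xi^2)=E f(T_t\xi^2)$ for every non-decreasing $f:\cY\to\R$, which is exactly the attractivity property \eqref{t1}.
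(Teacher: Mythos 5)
Your proof is correct and follows essentially the same route as the paper: the same simultaneous Harris coupling with a single set of arrows, reduced to checking that each individual arrow update preserves the pointwise order, followed by the standard Strassen-coupling deduction of attractivity. The only difference is cosmetic — you organize the case analysis by which interface fires (and spell out the delicate sub-case $\xi$ fires, $\xi'$ does not, via $\min\{\xi'_-(x-1),\xi'_-(x+1)\}=\xi'_-(x)-1$), whereas the paper splits on whether the two interfaces coincide at $x\pm1$ and leaves the remaining case to ``a similar argument.''
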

\begin{proof}
  Consider $\xi\le\xi'$ and call $\xi_t=T_t\xi$, $\xi'_t=T_t\xi'$, assume that
  $\xi_{t-}\le \xi'_{t-}$, $\xi_{t-}(x\pm1) = \xi'_{t-}(x\pm1)$ and that $t\in
  \omega^\uparrow_x$. Then $\xi_{t}(x)= \xi'_{t}(x)=\xi_{t-}(x-1)+1$ no matter the
  values $\xi_{t-}(x)$ and $ \xi'_{t-}(x)$. Analogous argument applies when
  $t\in \omega^\downarrow_x$. Hence an arrow does not change the order if the two
  interfaces coincide at $x\pm1$. If at least one of the neighbors of $x$ in
  $\xi_{t-}$ is different of the corresponding neighbor in $\xi'_{t-}$, a
  similar argument shows that no jump can break the domination. We have proven
  that if the the $\xi$ process is dominated by the $\xi'$ process just before
  an arrow, then the domination persist after the jump(s) produced by the
  arrow. Since the set of involved arrows is finite in any time interval, an
  iterative argument concludes the proof.
\qed\end{proof}
\paragraph{Remark} It is usual to realize the process with generator $\Lssep$ by
introducing only one rate-$\frac12$ Poisson process of marks $\omega_x, x\in \Z$
associated to each $x$ and updating $\xi(x)\to\xi(x-1)\pm 1\neq\xi(x)$ whenever a
$\omega_x$-mark appears provided $\xi(x-1)=\xi(x+1)$. This is indeed a
realization of the process, but order is not preserved: the jumps cross if two
interfaces coincide at $x\pm1$ and differ by 2 at $x$ at the updating time of
$x$. Using the up-arrows and down-arrows, only one of the interfaces jumps, and
order is preserved.
\begin{lemma}
  \label{m5}
  If $\xi\in \cY$, then $v(T_t\xi)=v(\xi)$ a.s. As a consequence, $V_{v(\xi)}\le
  T_t\xi$. Furthermore, if $\zr$ is a non decreasing path with
  $v(\xi)=\zr_0$, then $v(T^\zr_t\xi)=\zr_t$, which implies $V_{\zr_t}\le T^\zr_t\xi$.
\end{lemma}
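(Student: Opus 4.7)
The plan is to prove the two assertions in sequence: first the vertex invariance under the $\Lssep$-dynamics, then its immediate geometric consequence, and finally the moving-boundary version by induction on the jump times of the path $\zr$.

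For the first claim $v(T_t\xi)=v(\xi)$, the key observation is that $\Lssep$ updates $\xi(x)$ only at sites where $\xi(x-1)=\xi(x+1)$, and such sites cannot lie in the region where $\xi$ already coincides with the cone $V_{v(\xi)}$ away from its vertex, because on either arm of a cone consecutive values differ by $1$ and second-neighbors differ by $2$. Hence no update at a site outside a bounded neighborhood of the initial discrepancy set $\{x:\xi(x)\ne V_{v(\xi)}(x)\}$ can ever occur, and an induction on the (a.s.\ finitely many in finite time) involved arrows shows that $T_t\xi$ still belongs to $\cY_{v(\xi)}$. Then formula \eqref{2.2} applied to $T_t\xi$ gives back the same vertex $v(\xi)$. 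The inclusion $T_t\xi\in\cY_{v(\xi)}$ immediately yields the inequality $V_{v(\xi)}\le T_t\xi$, once we note that $V_v$ is the minimum element of $\cY_v$: any $\xi\in\cY_v$ is $1$-Lipschitz and satisfies $\xi(y)=V_v(y)$ for all $|y|$ large, so letting $y\to+\infty$ and $y\to-\infty$ in $\xi(x)\ge \xi(y)-|x-y|$ we obtain $\xi(x)\ge v_2+|x-v_1|=V_v(x)$.

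For the moving-boundary statement I would induct on the index $n$ of the last jump of $\zr$ before $t$. In the base case $t\in[0,s_1)$ we have $T^\zr_t\xi=T_t\xi$ and $v(\xi)=\zr_0=\zr_t$, so the first part applies. For the inductive step, assume $v(T^\zr_{s_n-}\xi)=\zr_{s_{n-1}}$. By the first part $V_{\zr_{s_{n-1}}}\le T^\zr_{s_n-}\xi$, and since $\zr$ is non-decreasing in the order \eqref{o20} we have $V_{\zr_{s_{n-1}}}\le V_{\zr_{s_n}}$ pointwise. On the cofinite set where $T^\zr_{s_n-}\xi$ agrees with $V_{\zr_{s_{n-1}}}$, this forces $T^\zr_{s_n-}\xi\le V_{\zr_{s_n}}$, so $\max\{T^\zr_{s_n-}\xi,V_{\zr_{s_n}}\}$ coincides with $V_{\zr_{s_n}}$ outside a finite set; hence $T^\zr_{s_n}\xi\in\cY_{\zr_{s_n}}$ and $v(T^\zr_{s_n}\xi)=\zr_{s_n}$. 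The $\Lssep$-evolution on $(s_n,s_{n+1})$ then preserves the vertex by the first part, giving $v(T^\zr_t\xi)=\zr_{s_n}=\zr_t$ on that interval. The inequality $V_{\zr_t}\le T^\zr_t\xi$ follows because $T^\zr_t\xi\in\cY_{\zr_t}$ and $V_{\zr_t}$ is minimal in $\cY_{\zr_t}$.

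The delicate technical point is to check that the pointwise maximum at the jump times $s_n$ actually produces an element of $\cY$, i.e.\ preserves the conditions $|\xi(x+1)-\xi(x)|=1$ and $x+\xi(x)$ even. Both arguments $T^\zr_{s_n-}\xi$ and $V_{\zr_{s_n}}$ share the parity constraint, so their difference is even at every site; wherever they disagree the difference is therefore at least $2$, and a short case analysis of the four possible sign patterns of the $\pm1$ increments shows that the maximum still takes $\pm 1$ steps. Once this lemma is in hand the induction above runs without further obstacle.
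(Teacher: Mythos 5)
Your proof is correct and follows essentially the same route as the paper's: show that the $\Lssep$-dynamics keeps the interface in $\cY_{v(\xi)}$ (the paper verifies this via the conserved quantities $\scl+\xi(\scl)$ and $\scr-\xi(\scr)$ of \eqref{m7}, you via the observation that updatable sites never lie strictly inside the conic arms so the asymptotics are untouched — two phrasings of the same fact), and then handle the $\zr$-jumps by noting that taking the max with the larger cone resets the vertex to $\zr_{s_n}$. Your extra verifications — that $V_v$ is the minimal element of $\cY_v$ and that the pointwise max of two admissible interfaces of equal parity is again admissible — are details the paper leaves implicit, and they are correct.
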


\begin{proof} By the definition of $(T_t\xi)$, any time $\scl$ jumps due to an arrow,
  the opposite jump is performed by $\xi(\scl)$ and analogously, any jump of $\scr$ is
  replicated by $\xi(\scr)$. This implies
  \begin{equation}
    \label{m7}
    \scl(T_t\xi)+\xi(\scl(T_t\xi))=
  \scl(\xi)+\xi(\scl(\xi)),\quad \scr(T_t\xi)-\xi(\scr(T_t\xi))=
  \scr(\xi)-\xi(\scr(\xi))
  \end{equation}
  Putting these identities in the definition \eqref{2.2} of $v(\xi)$ and noting that
  the total number of jumps of $\scl$ and $\scr$ is a.s. finite (it is dominated by a
  Poisson process of rate $2$), we get $v(T_t\xi)=v(\xi)$.

  The fact that $v(T^\zr_t\xi)=\zr_t$ is true by definition at the $\zr$-events and by
  the first part of the lemma, it is true for $t\in[s,s')$ where $s$ and $s'$
  are successive $\zr$-events.
\qed\end{proof}

\begin{proposition}
\label{thm5p}
Let $\zr$ and $\zr'$ be non decreasing paths in $\cV$. 
\begin{equation}
  \label{n14}
\text{If }{\zr_t}\le {\zr'_t} \text{ for all }t\ge 0 \text{  and }\xi\le
\xi',\text{ then }T^{\zr}_{t}\xi\le
T^{\zr'}_{t}\xi' \text{ for all }t\ge0\quad \text{a.s.}.
\end{equation}
\end{proposition}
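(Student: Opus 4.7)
The plan is to use induction on the merged sequence of jump times of the two paths $\zr$ and $\zr'$, propagating the inequality $T^{\zr}_t\xi \le T^{\zr'}_t\xi'$ through both the diffusive phases and the boundary updates.

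Let me first merge the jump times: let $0 = \sigma_0 < \sigma_1 < \sigma_2 < \dots$ be the strictly increasing sequence of times at which either $\zr$ or $\zr'$ jumps. These times are locally finite by the finite-jump assumption on both paths. The induction hypothesis at step $n$ will be $T^{\zr}_{\sigma_n}\xi \le T^{\zr'}_{\sigma_n}\xi'$ a.s., with the base case $n=0$ given by the hypothesis $\xi \le \xi'$.

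For the inductive step, I would split into two phases. \emph{Diffusive phase:} on the open interval $(\sigma_n, \sigma_{n+1})$, both processes evolve via the same random operator $T_{[\sigma_n,\cdot]}$, which preserves order by Lemma~\ref{t6}. Thus $T^{\zr}_{t}\xi \le T^{\zr'}_{t}\xi'$ for all $t \in [\sigma_n, \sigma_{n+1})$, and taking left limits, $T^{\zr}_{\sigma_{n+1}-}\xi \le T^{\zr'}_{\sigma_{n+1}-}\xi'$. \emph{Boundary update at $\sigma_{n+1}$:} three cases arise. If $\sigma_{n+1}$ is simultaneously a jump time for $\zr$ and $\zr'$, then
\[
T^{\zr}_{\sigma_{n+1}}\xi = \max\{T^{\zr}_{\sigma_{n+1}-}\xi,\, V_{\zr_{\sigma_{n+1}}}\}, \qquad T^{\zr'}_{\sigma_{n+1}}\xi' = \max\{T^{\zr'}_{\sigma_{n+1}-}\xi',\, V_{\zr'_{\sigma_{n+1}}}\},
\]
and since $\zr_{\sigma_{n+1}} \le \zr'_{\sigma_{n+1}}$ implies $V_{\zr_{\sigma_{n+1}}} \le V_{\zr'_{\sigma_{n+1}}}$ via \eqref{o20}, the pointwise max preserves the inequality. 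If only $\zr'$ jumps, the left side does not change while the right side only increases, so the inequality is trivially preserved.

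The only delicate case is when $\sigma_{n+1}$ is a jump time of $\zr$ but not of $\zr'$. Then I need to show $V_{\zr_{\sigma_{n+1}}} \le T^{\zr'}_{\sigma_{n+1}-}\xi'$, because then
\[
T^{\zr}_{\sigma_{n+1}}\xi = \max\{T^{\zr}_{\sigma_{n+1}-}\xi,\, V_{\zr_{\sigma_{n+1}}}\} \le T^{\zr'}_{\sigma_{n+1}-}\xi' = T^{\zr'}_{\sigma_{n+1}}\xi'.
\]
Here Lemma~\ref{m5} is the key tool: it gives $V_{\zr'_{t}} \le T^{\zr'}_{t}\xi'$ throughout, and since $\zr'$ does not jump at $\sigma_{n+1}$, the left limit coincides with the value, yielding $V_{\zr'_{\sigma_{n+1}}} \le T^{\zr'}_{\sigma_{n+1}-}\xi'$. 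Combined with $\zr_{\sigma_{n+1}} \le \zr'_{\sigma_{n+1}}$ and hence $V_{\zr_{\sigma_{n+1}}} \le V_{\zr'_{\sigma_{n+1}}}$, this closes the case.

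The main obstacle is precisely this asymmetric case, where one needs to guarantee that the cone being inserted on the smaller side already lies below the larger interface at the left limit; Lemma~\ref{m5} is designed to provide this, and it is the reason the construction \eqref{n12} uses $\max$ rather than a direct replacement. The remaining work is routine: the induction extends the order to all jump times, and combining this with the diffusive-phase argument yields the inequality for every $t \ge 0$.
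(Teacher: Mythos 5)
Your proof is correct and follows essentially the same route as the paper's: order preservation in the diffusive phases via Lemma~\ref{t6}, trivial handling of $\zr'$-events, and in the asymmetric case where only $\zr$ jumps, invoking Lemma~\ref{m5} to conclude $V_{\zr_t}\le V_{\zr'_t}\le T^{\zr'}_{t-}\xi'$ so that the max does not break the domination. Your write-up merely makes the merged jump-time induction more explicit than the paper does.
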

In particular, by taking $\zr=\zr'$, Proposition \ref{thm5p} says that for any non
decreasing path $\zr$, the process $(T^\zr_t\xi)$ is attractive.

\medskip
\begin{proof}\paf{
Since the domination is preserved in intervals with no $\zr$ or $\zr'$ events by
Lemma \ref{t6}, we only need to check that the inequality $T_t^{\zr}\xi\le
T_t^{\zr'}\xi$ is preserved when $\zr$ and $\zr'$ events occur. We thus suppose the
inequality is satisfied for all $s<t$ and this is evidently still true if $t$ is a
$\zr'$-event.  If instead $t$ is a $\zr$-event and not a $\zr'$ event,
$T^{\zr}_{t-}\xi\le T^{\zr'}_{t-}\xi'$ implies
\[
  T^{\zr}_t\xi\; =\;\max\{ T^{\zr}_{t-}\xi, V_{\zr_t}\} \;\le\; \max\{ T^{\zr'}_{t-}\xi',
  V_{\zr_t}\}  \;\le\; \max\{ T^{\zr'}_{t-}\xi',
  V_{\zr'_t}\}\;=\;  T^{\zr'}_t\xi,
\]
where the last inequality follows from  $V_{\zr_t}\le V_{\zr'_t}$ and the last
identity from the last inequality in Lemma~\ref{m5}.\qed}
\end{proof}

\paf{Let $\xi\in\cY$ and $v=(v_1,v_2)\in\cV$. Define  $\theta_v\xi$, the translation
by $v$ of $\xi$, by 
\[
\theta_v\xi(x):= \xi(x-v_1)-v_2.
\]
Recall the order \eqref{o20} in $\cV$. Taking $v,v'\in\cV$ and $\xi,\xi'\in
\cY$, the following statement is immediate.
\begin{equation}
  \label{7779}
 \hbox{ If }v\ge v'\hbox{ and } \xi\le \xi',\hbox{ then } \theta_v\xi\le\theta_{v'}\xi'.
\end{equation}
Call  $o:=(0,0)$ and take $v\in\cV$
satisfying $v\ge o$ and $\xi\in \cY_o$, then
\begin{equation}
  \label{t79}
  \theta_v\xi\le \xi \quad\text{ and } \quad\max\{\theta_v\xi,V_o\}\le \xi.
\end{equation}}

\paragraph{The interface process as seen from the vertex}\
Take $\xi\in \cY_o$ and let $z$ be a non decreasing path of vertices. Define $(\tilde T^\zr_t\xi)$, the interface process as seen from the vertex, by
\begin{equation}
  \label{170}
  \tilde T^\zr_t\xi(x) := \theta_{\zr_t}T^\zr_t\xi(x),
\end{equation}
 Of course $\tilde T^\zr_t\xi\in \cY_o$.

\paragraph{Monotonicity}
We show that if the initial interface $\xi'$ dominates $\xi$ and any jump of
$\zr'$ is \paf{dominated by a jump of $\zr$}, then the interface process as seen from $\zr'$
dominates the one as seen from $\zr$. More precisely,

\begin{proposition}
  \label{m3}
  Let $\zr$ and $\zr'$ be non decreasing paths on $\cV$ and $\xi\le \xi'$ be
  interfaces in $\cY_o$.
\begin{equation}
  \label{m4}
\begin{array}{c}
\text{If }\zr_t-\zr_{t-}\ge \zr'_t-\zr'_{t-} \text{ for all }t\ge 0,
\text{  then }\tilde T^{\zr}_{t}\xi\le
\tilde T^{\zr'}_{t}\xi' \text{ for all }t\ge0\quad \text{a.s.}.
\end{array}
\end{equation}
\end{proposition}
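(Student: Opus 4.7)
My plan is to induct on events --- the union of $\zr$-jumps, $\zr'$-jumps, and SSEP arrows --- working intrinsically in the moving frame. Using the algebra of the translations $\theta_v$, one first shows that $\tilde T^\zr_t\xi$ evolves by the SSEP dynamics on each open interval free of $\zr$-jumps, and that at a $\zr$-jump of magnitude $\Delta z:=\zr_t-\zr_{t-}$ the interface is updated by
\[
\tilde T^\zr_t\xi \;=\; \max\{\theta_{\Delta z}\,\tilde T^\zr_{t-}\xi,\; V_o\}.
\]
I then couple $(\tilde T^\zr_t\xi)$ and $(\tilde T^{\zr'}_t\xi')$ in the moving frame using a common Poisson arrow field, so that between jump events both processes see the same SSEP updates and Lemma~\ref{t6} applies directly.

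The base case at $t=0-$ reads $\tilde T^\zr_{0-}\xi=\xi\le\xi'=\tilde T^{\zr'}_{0-}\xi'$. For the inductive step on an interval $(s,t]$ free of $\zr$- or $\zr'$-events, Lemma~\ref{t6} (attractivity of $\Lssep$) propagates the inequality since the two moving-frame processes share the same SSEP arrows. It remains to check the inequality at jump times. Let $\Delta z,\Delta z'\ge o$ denote the jumps of $\zr,\zr'$ at the event $t$. The hypothesis $\Delta z\ge\Delta z'$ combined with $\Delta z'\ge o$ rules out the case $\Delta z=o$, $\Delta z'\ne o$. When both paths jump, applying \eqref{7779} with $v=\Delta z\ge v'=\Delta z'$ to the inductive inequality gives $\theta_{\Delta z}\tilde T^\zr_{t-}\xi\le\theta_{\Delta z'}\tilde T^{\zr'}_{t-}\xi'$, and pointwise maximum with $V_o$ preserves this. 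When only $\zr$ jumps, \eqref{t79} (applied to $\tilde T^\zr_{t-}\xi\in\cY_o$ and $\Delta z\ge o$) yields $\theta_{\Delta z}\tilde T^\zr_{t-}\xi\le\tilde T^\zr_{t-}\xi\le\tilde T^{\zr'}_{t-}\xi'$, and $V_o\le\tilde T^{\zr'}_{t-}\xi'$ holds by Lemma~\ref{m5}; thus the max remains bounded by $\tilde T^{\zr'}_{t-}\xi'=\tilde T^{\zr'}_t\xi'$.

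The delicate point is the choice of coupling. In the lab-frame Harris construction the two processes $T^\zr,T^{\zr'}$ naturally share a single arrow field, but their moving-frame images would then be driven by distinct spatial translates of it, and the attractivity of $\Lssep$ would not apply between jump events. Working intrinsically in the moving frame with a single common arrow field sidesteps this obstacle; by spatial stationarity of the Poisson process the resulting joint law is the one implicit in the statement, and the induction above yields the almost sure inequality.
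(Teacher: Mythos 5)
Your proof is correct and follows essentially the same route as the paper's: reduce to the jump times via the attractivity Lemma \ref{t6} on event-free intervals, and handle the jumps with \eqref{7779} (together with \eqref{t79} and Lemma \ref{m5} for the case where only $\zr$ jumps), which is exactly the paper's display \eqref{m44} once the omitted $\max\{\cdot,V_o\}$ is restored on both sides. Your closing remark on the coupling --- that the two moving-frame processes must share a single arrow field \emph{in the moving frame}, since the lab-frame common-arrow coupling would feed them mutually shifted arrows and Lemma \ref{t6} would not then apply between events --- identifies a genuine subtlety that the paper's one-line appeal to Lemma \ref{t6} passes over, and you resolve it correctly by observing that translation invariance of the Poisson field leaves the marginal laws, hence the stochastic domination asserted in \eqref{m4}, unchanged.
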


\begin{proof} \paf{Since by Lemma \ref{t6}, the domination is preserved in intervals with
  no $\zr$ or $\zr'$ events, it suffices to take care of those events.  Assume that
  $\tilde T^\zr_{t-}\xi \le \tilde T^{\zr'}_{t-}\xi'$ and that $t$ is a $z$ event, then
  \begin{equation}
    \label{m44}
   \tilde T^\zr_{t}\xi = \theta_{\zr_t-\zr_{t-}}\tilde T^\zr_{t-}\xi 
    \;\le\;\theta_{\zr'_t-\zr'_{t-}}\tilde T^{\zr'}_{t-}\xi'=\tilde T^{\zr'}_{t}\xi',
  \end{equation}
where the inequality holds by \eqref{7779}.} 
\qed\end{proof}

Let
\begin{equation}
  \label{vp1}
  \zr_t(\scj) := R_t= (A_t-B_t, A_t+B_t), 
\end{equation}
recalling that $A$ and
$B$ are independent Poisson processes of rate $\scj$. For each $\scj>0$ define
the interface process $(\tilde\xi_t)$ by
\begin{equation}
  \label{i22}
  \tilde\xi_t:=\tilde T^{\zr(\scj)}_{t}\xi
\end{equation}
Then, $(\tilde\xi_t)$ has generator is $\tLinter$ given by
\begin{equation}
  \label{m8}
  \tLinter= \Lssep+ \tLr + \tLl
\end{equation}
where $\Lssep$ was defined in \eqref{m9} and  the other generators govern the
updating with the maximum of the cone and the corresponding translation of the
origin:
\renewcommand{\arraystretch}{1.5}
\begin{equation}
  \label{m10}
  \begin{array}{ll}
\tLr f(\xi)&:= \scj \big[f(\max\{\theta_{(-1,1)}\xi, V_{o}\} )-f(\xi)\big],\\
\tLl f(\xi)&:= \scj \big[f(\max\{\theta_{(1,1)}\xi, V_{o}\} )-f(\xi)\big].
\end{array}
\end{equation}
For $\xi\in\cY_o$, the process $(\dD(\tilde\xi_t))$ has the same law as the
particle process $(\tilde\eta_t)$ defined in Section \ref{sec:3} with initial
particle configuration $\dD(\xi)\in\cX_0$.  The map $\dD:\cY_0\to\cX_0$
(interfaces with vertex in the origin to particle configurations with median
$-\frac12$) is bijective. Since the process $(\tilde\eta_t)$ has a unique invariant
measure on $\cX_0$, $(\tilde\xi^\zr_t)$ has a unique invariant measure
$\tilde\mu^\scj$ on $\cY_0$.
\paf{
\begin{corollary}
  \label{t12}
  The process $(\tilde\xi_t)$ is attractive. Furthermore, if
  $\tilde\xi_0=V_o$ the law of $\tilde\xi_t$ is non decreasing in $t$,
  is stochastically dominated by the invariant measure
  $\tilde\mu^{\scj}$ and converges to $\tilde\mu^{\scj}$ as
  $t\to\infty$.
\end{corollary}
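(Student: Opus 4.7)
The plan is to deduce all three claims from Proposition \ref{m3}, the uniqueness of the invariant measure (Theorem \ref{thm1.2}, transported to interfaces via the bijection $\dD:\cY_o\to\cX^0$), and the elementary fact that $V_o$ is the minimum element of $\cY_o$. The last point holds because if $\xi\in\cY_o$, then \eqref{2.2} forces $\xi(\scl)=|\scl|$, $\xi(\scr)=\scr$ with $\scl\le 0\le\scr$, and the $\pm 1$ step constraint then yields $\xi(x)\ge |x|=V_o(x)$ throughout $[\scl,\scr]$, while outside $[\scl,\scr]$ the interface coincides with $V_o$.

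Attractivity is immediate from Proposition \ref{m3} applied with $\zr'=\zr=\zr(\scj)$: the hypothesis $\zr_t-\zr_{t-}\ge \zr'_t-\zr'_{t-}$ holds with equality, hence $\xi\le \xi'$ in $\cY_o$ yields $\tilde\xi_t\le \tilde\xi'_t$ a.s. Write now $(S_t)$ for the semigroup of $(\tilde\xi_t)$ and $\mu_t$ for its law when $\tilde\xi_0=V_o$. Since $V_o$ is the minimum of $\cY_o$, $\delta_{V_o}\preceq \mu_s$ in stochastic order for every $s\ge 0$, and attractivity implies that $S_u$ preserves stochastic order, so
\[
\mu_u=\delta_{V_o}S_u\;\preceq\;\mu_sS_u=\mu_{s+u}\qquad (s,u\ge 0),
\]
which is the monotonicity claim. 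Replacing $\mu_s$ by $\tilde\mu^\scj$ in the same computation and using its invariance yields $\mu_t\preceq \tilde\mu^\scj$ for every $t\ge 0$.

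For the convergence, the family $(\mu_t)$ is non-decreasing in stochastic order and bounded above by the probability measure $\tilde\mu^\scj$ on the countable space $\cY_o$, hence converges to some limiting measure $\mu_\infty\preceq \tilde\mu^\scj$. To identify $\mu_\infty$ as invariant I would pass to the limit $t\to\infty$ in the identity $\mu_{t+s}=\mu_tS_s$: by the Harris graphical construction, the value of $\tilde\xi_s$ on any finite window depends, outside an event of arbitrarily small probability, on the initial interface only in a window of bounded size, so for a bounded cylinder function $g$ the function $S_sg$ is uniformly approximated by cylinder functions. Dominated convergence then gives $\mu_t(S_sg)\to \mu_\infty(S_sg)$, so $\mu_\infty=\mu_\infty S_s$ for every $s\ge 0$, i.e., $\mu_\infty$ is invariant. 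The uniqueness of the invariant measure from Theorem \ref{thm1.2}, transported to interfaces via $\dD$, forces $\mu_\infty=\tilde\mu^\scj$, completing the proof.

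The main obstacle is the localisation step needed to pass to the limit in $\mu_{t+s}=\mu_tS_s$: $S_sg$ is not strictly a cylinder function, but the graphical construction together with the fact that in a bounded time only finitely many arrows affect a given finite window provides the quantitative tail bounds that make the approximation routine. Everything else is a soft consequence of attractivity plus the minimum element property of $V_o$.
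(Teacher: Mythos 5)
Your proposal is correct and follows essentially the same route as the paper: attractivity from Proposition \ref{m3} with $\zr=\zr'$, monotonicity and domination from the minimality of $V_o$ in $\cY_o$ together with order preservation (the paper realizes the same inequalities by an explicit a.s.\ coupling $\tilde T_{[0,t]}^zV_o\le \tilde T_{[-s,t]}^zV_o$ via the time-shifted Harris construction, which is your stochastic-order computation in coupled form). The only divergence is the final convergence step, which the paper dispatches by citing standard ergodic theory for irreducible countable-state chains with an invariant probability measure, whereas you construct the monotone limit and identify it through invariance plus uniqueness --- both are valid.
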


\begin{proof}
  Attractivity follows by taking $\zr=\zr'$ in \eqref{m4}. As before use the
  notation $ \tilde T_{[s,t]}^z\xi$ to indicate that the evolution uses the
  Poisson processes of the Harris contruction in the time interval $[s,t]$ for
  both the interface evolution and the vertex evolution $z$. So that, for
  $s,t\ge 0$,  $\tilde
  T_{t+s}^z\xi$ has the same law as $\tilde T_{[-s,t]}^z\xi$. 
Since $V_o$ is minimal in
  $\cY_o$, using \eqref{m4} we get almost surely
\[
\tilde T_{[0,t]}^zV_o\le \tilde T_{[0,t]}^z\tilde T_{[-s,0]}^zV_o = \tilde
T_{[-s,t]}^zV_o . 
\]
This shows that the law of $\tilde\xi_t$ is stochastically non
decreasing. Take a random $\xi$ with law $\tilde\mu^{\scj}$. Then, 
\[
\tilde T_{[0,t]}^zV_o\le \tilde T_{[0,t]}^z \xi \sim \tilde\mu^{\scj}. 
\]
by invariance of $\tilde\mu^{\scj}$.  The convergence of the law of $\tilde
\xi_t$ to the unique invariant measure $\tilde\mu^{\scj}$ is routine for
countable state irreducible Markov processes. \qed
\end{proof}

\begin{corollary}
  \label{t13}
The invariant measures $\tilde\mu^\scj$ for the interface processes
$(\tilde\xi_{t})$ are stochastically ordered: 
\[
\text{If }\scj\ge \scj', \text{ then } \tilde\mu^\scj \le \tilde\mu^{\scj'}
\]
\end{corollary}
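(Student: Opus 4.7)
The plan is to construct an explicit coupling on a common probability space and then appeal to the monotonicity of Proposition~\ref{m3}. Fix $\scj \ge \scj'$. I would build the driving Poisson processes by superposition: let $A', B'$ be independent Poisson processes of rate $\scj'$ and let $\tilde A, \tilde B$ be independent Poisson processes of rate $\scj - \scj'$, all jointly independent and independent of the graphical construction arrows $\omega$. Set $A := A' + \tilde A$ and $B := B' + \tilde B$, so that $A,B$ are independent rate-$\scj$ Poisson processes. Define the vertex paths $\zr_t := (A_t - B_t, A_t + B_t)$ and $\zr'_t := (A'_t - B'_t, A'_t + B'_t)$, which by \eqref{vp1} realize the vertex processes with parameters $\scj$ and $\scj'$ respectively on a common space.

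Next I would verify the jump hypothesis $\zr_t - \zr_{t-} \ge \zr'_t - \zr'_{t-}$ of Proposition~\ref{m3} in the cone order on $\cV$. Almost surely, no two of the four underlying Poisson processes share a point, so at every jump time of $\zr'$ the same arrival also triggers $\zr$, and the two increments are identical (either $(1,1)$ or $(-1,1)$). At the extra jump times of $\zr$ coming from $\tilde A$ or $\tilde B$, the increment of $\zr'$ is $(0,0)$ while the increment of $\zr$ is $(\pm 1,1)$; since $V_{(\pm 1,1)}(x) = |x \mp 1| + 1 \ge |x| = V_o(x)$ for every $x$, these increments dominate $(0,0)$ in the vertex order. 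At all other times both increments vanish and the inequality is trivial.

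Taking $\xi = \xi' = V_o$ in Proposition~\ref{m3}, I obtain $\tilde T^{\zr}_t V_o \le \tilde T^{\zr'}_t V_o$ almost surely for every $t \ge 0$. By Corollary~\ref{t12}, the laws of these two random interfaces converge as $t \to \infty$ to $\tilde\mu^\scj$ and $\tilde\mu^{\scj'}$ respectively. Since almost sure domination at each $t$ gives stochastic domination at each $t$, and the partial order on $\cY_o$ is closed in the product topology so that stochastic domination passes to weak limits, the limit measures inherit the order, yielding $\tilde\mu^\scj \le \tilde\mu^{\scj'}$.

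The main obstacle is matching the vertex order correctly when building the coupling. Since the order on $\cV$ is defined via cone domination, the two elementary jump directions $(1,1)$ and $(-1,1)$ are \emph{incomparable}; a naive coupling that pairs surplus $\zr$-jumps with $\zr'$-jumps of the opposite type would therefore fail the hypothesis of Proposition~\ref{m3}. The superposition coupling sidesteps this precisely because every jump of $\zr'$ is executed simultaneously and identically by $\zr$, so the only nontrivial comparisons reduce to positive jumps of $\zr$ against the zero jump of $\zr'$, which hold automatically.
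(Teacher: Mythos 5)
Your proposal is correct and follows essentially the same route as the paper: both couple the two vertex processes by realizing the rate-$\scj'$ Poisson processes as subsets (equivalently, summands) of the rate-$\scj$ ones, invoke Proposition~\ref{m3} to get almost-sure domination of the interfaces seen from the vertex, and pass to the invariant measures via the monotone convergence of Corollary~\ref{t12}. Your explicit verification that the surplus increments $(\pm1,1)$ dominate $(0,0)$ in the cone order, together with the remark that $(1,1)$ and $(-1,1)$ are incomparable so the coupling must pair like with like, is a detail the paper leaves implicit.
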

\begin{proof}
  Take Poisson processes $(A,A',B,B')$ such that $(A,A')$ and $(B,B')$ are
  independent. $A$ and $B$ have rate $\scj$ while $A'$ and $B'$ have rate
  $\scj'$ and $A\supset A'$, $B\supset B'$. In this way the vertex paths $z$ and
  $z'$ defined by \eqref{vp1} with $(A,B)$ and $(A',B')$, respectively, satisfy
  the conditions of Proposition \ref{m3}. This implies that $\tilde T_t^zV_o\le
  \tilde T_t^{z'}V_o$ almost surely for all $t$. Like in Corollary \ref{t12}, the coupled
  process $(\tilde T_t^zV_o, \tilde T_t^{z'}V_o)$ is stochastically non
  decreasing for the (partial) coordinatewise order and
  each coordinate is dominated by the respective invariant measure. This implies
  the existence of $\lim_{t\to\infty}(\tilde T_t^zV_o, \tilde T_t^{z'}V_o)$ in
  distribution. The coordinates of the limit are ordered and its marginals have
  distributions $\tilde\mu^\scj, \tilde\mu^{\scj'}$. \qed
\end{proof}
}

 \setcounter{equation}{0}

\section{Hydrodynamic limit}
\label{sec:5}

It is well known that in the diffusive scaling limit (space scaled
as $\vep^{-1}$ and time as $\vep^{-2}$) the hydrodynamic limit of the SSEP process alone converges
to the linear heat equation:
\begin{equation}
\label{4.00}
\frac {\partial \rho_t}{\partial t} = \frac 12 \frac {\partial^2 \rho_t}{\partial r^2},\quad \rho_0=\rho
\end{equation}
Since the distance between the first hole and last particle is random, it is not clear a-priori that our model should be scaled diffusively as well.  From
 Theorem \ref{thm1.2} we know that at equilibrium the mean distance between the rightmost
particle and the leftmost hole is of order $\scj^{-1}$ and this suggest that together with the above diffusive scaling we should  scale $\scj$ proportionally to $\vep$.
Indeed we will prove that under such scaling limit the density
of the process converges as $\vep \to 0$ to a deterministic evolution.

\subsection{{\bf Results}}

\paragraph{Hydrodynamics of interfaces. }
 {\sl
  Initial configurations. } For each $\vep>0$ the interface evolution starts
from an interface $\xi^{(\vep)}$ such that:
\begin{equation}
\label{4.00.1}
\lim_{\vep\to 0} \sup_{x\in \Z} |\vep \xi^{(\vep)}(x)- \phi_0(\vep x)| = 0
\end{equation}
where $\phi_0(r)=r$ for all $r\ge \scr(\phi) >0$;  $\phi_0(r)=-r$ for all $r\le \scl(\phi)<0$
and $\phi_0$ is differentiable in
$(\scl(\phi),\scr(\phi))$ with derivative $\phi'_0$ such that $\dis{\sup_{r\in
    (\scl(\phi),\scr(\phi))
}|\phi'_0(r)| <1}$.

Fix the macroscopic current $j>0$, define $\scj(\vep):=j\vep$
and call
\begin{eqnarray}
  \label{l55}
  (\xi^{(\vep)}_t) &:=& \text{ interface process with generator }
  \Lssep+\Lrvep+\Llvep, \text{ starting from }\xi^{(\vep)}.
\end{eqnarray}
The following is the hydrodynamic limit for the interface process.

\vskip.5cm

\begin{theorem}
\label{thm6a}
There is
a  function $\phi_t(r)$, $t\ge 0$, $r\in \mathbb R$, so that for any $\ga>0$ and $t>0$:
 \begin{equation}
\label{4.00.2}
\lim_{\vep\to 0} P\Big[\sup_{x\in \Z} |\vep \xi^{(\vep)}_{\vep^{-2}t}(x)- \phi_t(\vep x)|
 \ge \ga\Big]= 0
\end{equation}

\end{theorem}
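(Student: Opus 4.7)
The plan is to exploit the barrier construction of Section~\ref{sec:4}. For each $\delta>0$ introduce the two auxiliary interface processes
\[
\xi^{(\vep),\delta,\pm}_t := T^{\zr^{\delta,\pm}}_t \xi^{(\vep)},
\]
built from the Harris coupling using the same arrows and the same Poisson processes $A,B$ of rate $\scj(\vep)=j\vep$ that drive $(\xi^{(\vep)}_t)$, with $\zr^{\delta,\pm}$ the step paths \eqref{d1t}. Since $R_t$ is non-decreasing in the vertex order of $\cV$ (because for any $s<t$ the sum of the two components of $R_t-R_s$ dominates the absolute value of their difference), one has $\zr^{\delta,-}_t \le R_t \le \zr^{\delta,+}_t$ for every $t\ge 0$, and Proposition~\ref{thm5p} gives the almost-sure sandwich $\xi^{(\vep),\delta,-}_t \le \xi^{(\vep)}_t \le \xi^{(\vep),\delta,+}_t$ for all $t\ge 0$.

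I next establish the hydrodynamic limit of the $\delta$-processes. On every macroscopic subinterval $[n\delta,(n+1)\delta)$ the path $\zr^{\delta,\pm}$ is constant, so the $\delta$-processes evolve as pure interface SSEP with generator $\Lssep$; classical hydrodynamics \cite{demasipresutti} then gives convergence in probability of $\vep\,\xi^{(\vep),\delta,\pm}_{\vep^{-2}t}(\lfloor r/\vep\rfloor)$ to the solution of the linear heat equation for the interface profile, the uniformity in $r$ being inherited from the Lipschitz constraint $|\xi(x+1)-\xi(x)|=1$ together with the fact that outside a macroscopically bounded region the interface coincides with a deterministic cone. At each boundary time $n\delta$ the cone vertex is displaced by $R_{k\delta\vep^{-2}}-R_{(k-1)\delta\vep^{-2}}$ for the appropriate $k$; since $R_t=(A_t-B_t,A_t+B_t)$ and $A,B$ are independent rate-$j\vep$ Poisson processes, the law of large numbers gives $\vep$ times this increment converging a.s.\ to $(0,2j\delta)$. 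Taking the maximum with a cone whose vertex has risen by $2j\delta$ corresponds, under the map $D$ and the relation $\rho=\frac12-\frac12(\xi(x+1)-\xi(x))$, to the operator $\Gamma^{j\delta}$ of \eqref{d111} acting on the density (removing a mass $j\delta$ at the right boundary and inserting a mass $j\delta$ at the left boundary). Iterating over $n$, $\vep\,\xi^{(\vep),\delta,\pm}_{\vep^{-2}t}$ converges in probability to the deterministic interface profile $\phi^{\delta,\pm}_t$ corresponding to the discretized macroscopic evolution \eqref{g88}.

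To conclude one shows that $\phi^{\delta,-}_t$ and $\phi^{\delta,+}_t$ have a common limit $\phi_t$ as $\delta\to 0$. Their initial profiles differ only by one $\Gamma^{j\delta}$ step, so $\sup_r|\phi^{\delta,+}_0-\phi^{\delta,-}_0|=O(j\delta)$. The heat semigroup is a contraction for $L^\infty$ differences of interfaces and $\Gamma^{j\delta}$ is monotone and mass-preserving, so this gap is preserved: $\sup_r|\phi^{\delta,+}_t-\phi^{\delta,-}_t|=O(j\delta)$ uniformly for $t$ in compact intervals. Combining with the sandwich and sending first $\vep\to 0$ and then $\delta\to 0$ produces a deterministic $\phi_t$ satisfying \eqref{4.00.2}. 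The hardest part is precisely the quantitative separation estimate $\phi^{\delta,+}_t-\phi^{\delta,-}_t=O(\delta)$ uniformly in $t$: the $\Gamma^{j\delta}$ operator depends nonlinearly on the density through the quantiles $\scr^{j\delta}$ and $\scl^{j\delta}$, so controlling the gap in $L^\infty$ requires showing that the free boundaries of $\rho^{\delta,\pm}_t$ stay bounded and do not collapse on finite time horizons, which is where the initial regularity $\sup|\phi_0'|<1$ and the attractivity and monotonicity results of Section~\ref{sec:4} enter decisively.
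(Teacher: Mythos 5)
Your proposal follows essentially the same route as the paper: the same Harris-coupling sandwich $\xi^{\vep,\delta,-}_t\le\xi^{(\vep)}_t\le\xi^{\vep,\delta,+}_t$ obtained from Proposition \ref{thm5p} with the ordered vertex paths of \eqref{d1t}, hydrodynamics of the delta processes block by block, the $O(j\delta)$ separation of the two barriers, and the limits $\vep\to0$ then $\delta\to0$. The only remarks are that the paper puts the technical weight where you put the least --- on the delta-process hydrodynamics (Proposition \ref{thm7}), where sup-norm control of block averages and of the quantiles is what identifies the microscopic cutting with $\Gamma^{j\delta}$ --- while the separation estimate \eqref{5.1} is a short induction using the maximum principle and the fact that the two cone heights differ by exactly $j\delta$; and the existence of the $\delta\to0$ limit of $\phi^{\delta,-}_t$ is obtained from monotonicity along dyadic $\delta$ rather than from the vanishing gap alone.
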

The proof will be given in subsection \ref{sub5.4} and
properties of  $\phi_t(r)$ will be discussed in Section \ref{sec:6}.

\paragraph{Hydrodynamics of particles.}  As we shall prove at the end of
subsection \ref{sub6.1}, the hydrodynamic limit for the
particle process Theorem \ref{thm1} is a corollary of Theorem~\ref{thm6a}.

\vskip.5cm


\vskip.5cm

The proof that \eqref{4.00} is the hydrodynamic limit for the SSEP is quite
simple because the correlation functions obey closed equations.  The addition of
birth-death processes spoils such a property but if the rates are cylinder
functions and are ``small'' (births and deaths happen at rate $\vep^{2}$) the
proofs carry over and the limit is a reaction diffusion equation as in
\cite{DFL}.  In our case the birth-death rates are not as small (because the
killing rates are of order $\vep$) but the main difficulty is that the killings
are highly non local functions of the configuration, since births and deaths
occur at the position of the leftmost hole and the rightmost particle,
respectively. This spoils completely an analysis based on the study of the
hierarchy of the correlation functions.

The way out is to use inequalities namely to sandwich the interface process
between two \emph{delta processes}, using Proposition \ref{m3}.  The
corresponding particle delta process behave as the exclusion dynamics in
macroscopic time intervals of lenght $\delta$ and the (accumulated) killings
occur at the extremes of those intervals.

\subsection{\bf The delta processes}
\label{d1p}
  {\em The delta interface process} $(\xi^{\delta,\pm}_t)$ is defined via
  \eqref{n12} and \eqref{d1t} by
\begin{equation}
  \label{e23}
  \xi_t^{\delta,\pm} := T^{\zr\pm}_t\xi,\text{ with }  \zr\pm:=\zr^{\pm\delta
  }_{t}
\end{equation}
This process is obtained by patching together finitely many pieces
of the evolution with generator $\Lssep$ as explained after \eqref{d1t}. By
Proposition \ref{thm5p}, as
\[
z_t^{\delta,-}\le R_t\le z_t^{\delta,+},\qquad\hbox{for all }t\ge 0,
\]
\begin{equation}
  \label{r111}
  \xi_t^{\delta,-}\le \xi_t\le \xi_t^{\delta,+},\qquad\hbox{for all }t\ge 0.
\end{equation}

The {\em delta particle processes} are defined by using the map \eqref{2.2.1},
setting $\eta_t^{\delta,\pm}= \dD(\xi_t^{\delta,\pm})$.  In other words,
$(\eta_t^{\delta,-})$ evolves with the generator $L_0$ in the intervals
$[n\delta,(n+1)\delta)$, and $\eta_{n\delta}^{\delta,-}$ is obtained from
$\eta_{n\delta-}$ by removing its \paf{$B_{n\delta}-B_{(n-1)\delta}$ rightmost
particles and its $A_{n\delta}-A_{ (n-1)\delta}$} leftmost holes, where $A$
and $B$ are the independent Poisson processes with intensity $\scj$. The
interpretation of the process $(\eta_t^{\delta,+})$ is analogous but the removal
of particles and holes is done at the beginning rather than at the end of each
time interval.  For a particle configuration $\eta\in\cX$ and positive integers
$a$ and $b$ define the quantiles $\scl^a(\eta)$ and $ \scr^b(\eta)$ as the
lattice points satisfying
\begin{equation}
	\label{lr}
\sum_{x\ge \scr^b(\eta)}\eta(x)=b, \qquad \sum_{x\le \scl^a(\eta)}(1-\eta(x))=a.
\end{equation}
and define
\begin{equation}
 \Gamma^{a,b}(\eta)(x) = \eta(x)(1-\one_{x\ge \scr^b})+ (1-\eta(x))\one_{x\le
  \scl^a};
\end{equation}
this is the configuration obtained from $\eta$ by erasing particles and holes as
explained above.

We abuse notation and write $T_{[s,t]}\eta$ the evolution
$D(T_{[s,t]}\xi)$, with $\eta=D\xi$. Then the delta particle processes satisfy
the following: for $n=0,1,\dots$, 
\begin{eqnarray}
  \eta^{\delta,\pm}_t &=&
  T_{[n\delta,t]}\eta^{\delta,\pm}_{n\delta},\qquad  \hbox{for }t\in [n\delta,(n+1)\delta)\nonumber\\
  \eta^{\delta,-}_t &=& \Gamma^{A_{n\delta},B_{n\delta}}\eta^{\delta,-}_{t-},\qquad\hbox{for }
  t=n\delta\label{g93}\\
  \eta^{\delta,+}_t &=& \Gamma^{A_{(n+1)\delta},B_{(n+1)\delta}}\eta^{\delta,+}_{t-},\qquad\hbox{for }   t=n\delta\nonumber
\end{eqnarray}
where recall $A$ and $B$ are independent Poisson processes of parameter $\scj$.
\medskip

{\em The rescaled delta processes. } We consider a family of processes indexed
by $\vep$ by considering $\vep^{-2}\delta$ instead of $\delta$ and $\vep j$
instead of $\scj$.  We call $(\xi^{\vep,\delta,\pm}_t)$ and
$(\eta^{\vep,\delta,\pm}_t)$ the interface and particle processes so
obtained. Later we consider those processes with time rescaled by a factor $\vep^{-2}$
and space by a factor $\vep^{-1}$.

%

For any fixed (macroscopic) $\delta>0$, we will prove the existence of
$\phi^{\delta,\pm}_t$, the limit as $\vep\to 0$ of
$\vep\xi^{\vep,\delta,\pm}_{t\vep^{-2}}$. We also prove that
$\phi^{\delta,\pm}_t$ are close to each other and that their difference vanishes
as $\delta\to 0$. Taking $\delta$ to zero, their common limit $\phi_t$ is then
the hydrodynamic limit of the rescaled evolution $\xi^{(\vep)}_t$ ---as this is
squeezed between $\xi^{\vep,\delta,-}_t$ and $\xi^{\vep,\delta,+}_t$.

While the above outline involves the interface process alone (which then implies
convergence for the particle process as well), yet the analysis of the limit as
$\vep \to 0$ of $(\xi^{\vep,\delta,\pm}_t)$ is more conveniently studied by
looking at the delta particle process $(\eta^{\vep,\delta,\pm}_t)$ and then
translating the results to the delta interface process.  We start from the
particle model defining first the corresponding approximate macroscopic density
delta-evolutions and then prove existence of the hydrodynamic limit for the
delta particle process.

  \vskip.5cm

\subsection{\bf The macroscopic delta-evolutions.}
\label{d88}

In this subsection we fix $\delta>0$ and define the macroscopic delta evolutions
of densities $\rho_t^{\delta,\pm}$ and interfaces $\phi^{\delta,\pm}_t$.

\emph{Preliminary results. } For any density $\rho:\R\to[0,1]$ define the
$\mathbb R_+ \cup \{+\infty\}$ valued functions
\[
\cm(r;\rho):=\int_r^\infty \rho(r') dr',\quad \tcm(r;\rho):=\int_{-\infty}^r (1-\rho(r'))dr'
\]
representing the \emph{mass} of $\rho$ to the right of $r$ and the \emph{antimass} to its
left. These are the macroscopic analogues of the number of particles,
respectively holes, to the right, respectively left, of $r$. We introduce two
disjoint subsets of densities called $\mathcal R$ and
$\mathcal U$.

Let $\mathcal R$ be the set of  densities $\rho\in  L^\infty(\mathbb R, [0,1])$
satisfying the following conditions: 
{\parindent 0pt
\parskip2mm

(i)\;  $\cm(0,\rho)= \tcm(0,\rho)<\infty$, that is mass to the right and
antimass to the left are finite and the origin is the \emph{median} of $\rho$.

(ii)\; $\rho$ has \emph{finite boundaries}. That is, there exist
$-\infty<\scl(\rho)\le\scr(\rho)<\infty$ such that
$\rho$ has \emph{finite support}
$[\scl(\rho),\scr(\rho)]$:
$\rho$ put no mass to the right of $\scr(\rho)$ and
no antimass to the left of $\scl(\rho)$.

(iii)\; $\rho$ is  \emph{continuous in the interior of the support}. That is, if
$\scl(\rho)<\scr(\rho)$ then $\rho(r)$ is continuous in
$(\scl(\rho),\scr(\rho))$ with values in $ (0,1)$.

 We call $\mathcal R^*$
the set  of $\rho\in \mathcal R$ such that $\scr(\rho)>0$.

\medskip
Let  $\mathcal U=\big\{u\in C(\mathbb R,
  (0,1)):\cm(0,u)= \tcm(0,u)<\infty\}$. This is the set of continuous
  densities with median~0.

Let $h\in\cR$ be the \emph{Heaviside density} defined by $h(r) = \one_{\{r\le
  0\}}$. Clearly $\scl(h)=\scr(h)=0$.
}

\medskip

\begin{lemma}
\label{lemma8}
$\cm(r;u)<\infty$ and $\tcm(r;u)<\infty $ for all $r\in \mathbb R$ and
$u\in \mathcal U\cup  \mathcal R$.

If $\rho \in \mathcal R$ then $\scr(\rho)\ge 0$ and $\scl(\rho)\le 0$; the two
inequalities are strict unless $\rho=h$.

If $\scr(\rho)>0$ then $\scl(\rho)<0$ and
the derivatives $\cm'(r;\rho)$ and $\tcm'(r;\rho)$ exist in $(\scl(\rho),\scr(\rho))$ where they are
respectively strictly negative and positive.

If $u\in \mathcal U$ then
$\cm,\,\tcm\in C^{1}(\mathbb R) $ and $\cm'(r;u)<0$ and $\tcm'(r;u)\ge 0$ for all $r\in
\mathbb R$.

For any $u\in \mathcal U\cup  \mathcal R$
and for any $\delta>0$
%
there are unique points $\scr^\delta(u),\scl^\delta(u)$ such that
\begin{equation}
\label{d09}
  \tcm(\scl^\delta;u) = \delta;\qquad \cm(\scr^\delta;u) = \delta.
\end{equation}

If $\cm(0;u)  \gtreqless \delta$ then
$\scr^{\delta}(u) \gtreqless 0$ and  $\scl^{\delta}(u)  \lesseqgtr 0$.

\end{lemma}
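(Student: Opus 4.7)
The lemma collects a list of elementary analytic facts about densities in $\mathcal R\cup\mathcal U$. My plan is to treat the six assertions in order, each time using only the defining properties together with monotonicity, continuity and the fundamental theorem of calculus.

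\emph{Finiteness and location of the boundaries (parts 1--2).} For $\rho\in\mathcal R$ the support condition gives the trivial bounds $\cm(r;\rho)\le(\scr(\rho)-r)_+$ and $\tcm(r;\rho)\le(r-\scl(\rho))_+$; for $u\in\mathcal U$ the identities $\cm(r;u)=\cm(0;u)-\int_0^r u$ and $\tcm(r;u)=\tcm(0;u)+\int_0^r(1-u)$ propagate the finiteness at $0$ (built into the definition of $\mathcal U$) to every $r$. For part 2 I argue by contradiction: if $\scr(\rho)<0$ then $\cm(0;\rho)=0$ while $\tcm(0;\rho)\ge -\scr(\rho)>0$, violating the median condition; analogously $\scl(\rho)\le 0$. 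The same argument shows that if $\scr(\rho)=0$ then $\tcm(0;\rho)=0$, which together with continuity condition (iii) forces $\rho=h$.

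\emph{Derivatives (parts 3--4).} If $\scr(\rho)>0$, the same type of argument as in part 2 (swapping the roles of $\scl$ and $\scr$) yields $\scl(\rho)<0$; then continuity of $\rho$ on the open support together with the fundamental theorem of calculus gives $\cm'(r;\rho)=-\rho(r)<0$ and $\tcm'(r;\rho)=1-\rho(r)>0$ for $r\in(\scl(\rho),\scr(\rho))$. For $u\in\mathcal U$ the same argument applies globally since $u$ is continuous on all of $\mathbb R$ with values in $(0,1)$.

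\emph{Quantiles (parts 5--6).} The key observation is that $r\mapsto\cm(r;\cdot)$ is continuous and strictly decreasing on the relevant interval. For $u\in\mathcal U$, strict monotonicity on $\mathbb R$ follows from $u>0$, and to obtain a unique $\scr^\delta$ for every $\delta>0$ I need $\lim_{r\to-\infty}\cm(r;u)=+\infty$; this is the only place where the hypothesis $\tcm(0;u)<\infty$ is used essentially, since it forces $\int_{-\infty}^0 u=+\infty$ via $\int_{-\infty}^0 u=\int_{-\infty}^0 1-\tcm(0;u)$. For $\rho\in\mathcal R$, $\cm(\cdot;\rho)$ is strictly decreasing and continuous on $(-\infty,\scr(\rho))$ (decreasing at unit rate on $(-\infty,\scl(\rho))$ since $\rho=1$ there, and at rate $\rho>0$ on $(\scl(\rho),\scr(\rho))$) with limit $+\infty$ at $-\infty$, so the intermediate value theorem gives a unique $\scr^\delta$; the argument for $\scl^\delta$ is symmetric. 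Part 6 is then immediate from strict monotonicity together with the median identity $\cm(0;u)=\tcm(0;u)$: the comparison $\cm(0;u)\gtreqless\delta=\cm(\scr^\delta;u)$ translates directly into $\scr^\delta\gtreqless 0$, and analogously for $\scl^\delta$. No single step poses a serious obstacle; the only one requiring any thought beyond unpacking definitions is the divergence of $\cm(r;u)$ as $r\to-\infty$ for $u\in\mathcal U$.
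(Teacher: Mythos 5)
Your proof is correct and follows essentially the same route as the paper: finiteness via the bound $\cm(r;u)\le \cm(0;u)+|r|$ (or the support bound for $\mathcal R$), the sign of the boundaries by contradiction with the median identity $\cm(0;\rho)=\tcm(0;\rho)$, the derivatives from continuity of the density, and part 6 from strict monotonicity. The only difference is that you explicitly supply the intermediate-value argument for existence and uniqueness of the quantiles (including the divergence $\cm(r;u)\to\infty$ as $r\to-\infty$), a point the paper's proof leaves implicit; that is a welcome but minor addition, not a change of method.
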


\medskip

\begin{proof} Let $u\in  \mathcal U\cup  \mathcal R$ then for any $r\in \mathbb R$
\[
\cm(r;u) =\int_0^\infty u(r') dr' - \int_0^r u(r') dr'\le \cm(0;u) +|r| <\infty,
\]
with an analogous argument showing that also $\tcm(r;u)<\infty$.  If $\rho \in
\mathcal R$ then $\scr(\rho)\ge 0$ because if $\scr(\rho)< 0$ then $\cm(0;\rho)=0$. Since  $\cm(0;\rho)=\tcm(0;\rho)$, then $\tcm(0;\rho)=0$ and this gives a contradiction since $\scr(\rho)< 0$ and $\rho\equiv 1$ is not allowed.  Moreover $\scr(\rho)=0$ if and only if $\rho$ is
the Heaviside density because $\cm(0;\rho)=\tcm(0;\rho)$.

If $u\in \mathcal U$ then $\cm'(r;u)=-u(r)<0$ and $\tcm'(r;u)=1-u(r)>0$, by the
definition of $\mathcal U$. If $\rho\in \mathcal R$
and $\scr(\rho)>0$ then $\scl(\rho)<0$ and by the definition of $\mathcal R$, $\rho$
is continuous in $(\scl(\rho),\scr(\rho))$ and away from $0$ and $1$. Hence
$\cm'(r;\rho)=-\rho(r)<0$ and $\tcm'(r;\rho)=1-\rho(r)>0$ for all
$r\in(\scl(\rho),\scr(\rho))$.

Let $u\in \mathcal U\cup \mathcal R$.  By the monotonicity of $\cm(r;u)$ if
$\cm(0;u)>\delta$ then $\scr^\delta(u)>0$, while if $\cm(0;u)<\delta$ then
$\scr^\delta(u)<0$ with the analogous property for $\scl^\delta(u)$. \qed
\end{proof}

\medskip
\noindent

\noindent{\bf Definition of $\Ga^\delta$ and $G_t$}. We call $\Ga^\delta:\mathcal U \cup \mathcal
R\to \mathcal R$ the following map. If $ \scr^{\delta}(u)\le 0$ and therefore $
\scl^{\delta}(u)\ge 0$ we set $\Ga^\delta(u)=h$, the Heaviside density.  If
instead $ \scr^{\delta}(u)> 0$ and hence $ \scl^{\delta}(u)< 0$ we set
$\rho=\Ga^\delta(u)$ equal to $0$ for $r> \scr^{\delta}(u)$, equal to 1 for
$r< \scl^{\delta}(u)$ and equal to $u$ elsewhere.  In this latter case
$\rho=\Ga^\delta(u)\in\mathcal R^*$ (that is, $\scr(\rho)=\scr^{\delta}(u)>0$).
Thus $\Ga^\delta$ acts by removing a portion $\delta$ of mass from the right of
$\scr^{\delta}(u)$ and put it back to the left of $\scl^{\delta}(u)$.

Denote by $G_t$ the Gaussian kernel:
\begin{equation}
  \label{k6}
  G_t(r,r'):= \frac1{\sqrt{2\pi t}}{e^{-(r-r')^2/2t}}
\end{equation}
And write $G_t\rho(r)= \int dr' G_t(r,r')\rho(r')$. Recall that $G_t\rho$ is the
solution of the heat equation \eqref{4.00} with initial data $\rho$.
\medskip

\begin{lemma}
\label{lemma6}
Let $\rho \in \mathcal R\cup \mathcal U$. Then $G_t\rho\in\mathcal U$ for any
$t>0$.
 Moreover, calling
\begin{equation}
  \label{4.4.1}
  \mathcal U_{\delta}=\big\{ u\in \mathcal U:
  \scl^{\delta}(u)<0< \scr^{\delta}(u)\big\}
\end{equation}
for any $j>0$ there is $\delta(j)>0$ so that for any $u \in \mathcal R\cup \mathcal U$ and
$\delta<\delta(j)$, $G_\delta u\in \mathcal U_{j\delta}$ and therefore
$\Ga^{j\delta}(G_\delta u)\in \mathcal R^*$.
\end{lemma}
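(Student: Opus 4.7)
The plan is to verify each property in the definition of $\mathcal U$ for $G_t\rho$, and then to derive a lower bound for $\cm(0;G_\delta u)$ that is uniform over $u\in\mathcal R\cup\mathcal U$. Smoothness of $G_t\rho$ on $\mathbb R$ follows by differentiation under the integral, using smoothness of the Gaussian kernel and $\rho\in L^\infty$. The strict bounds $0<G_t\rho(r)<1$ follow from strict positivity of $G_t(r,r')$ together with the fact that $\rho$ is neither a.e.\ $0$ nor a.e.\ $1$ (for $\rho\in\mathcal R$ by the support conditions; for $\rho\in\mathcal U$ automatically). The median property is the subtle point: I would write $\rho=h+f$ with $h$ the Heaviside density, and observe that $f\ge 0$ on $[0,\infty)$ with total mass $\cm(0;\rho)$ and $f\le 0$ on $(-\infty,0]$ with total mass $-\tcm(0;\rho)$, so $f\in L^1$ with $\int f=0$ by the median condition on $\rho$. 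Mass conservation $\int G_t f=\int f=0$ then reduces the median claim to $h$ itself, and the symmetry $G_th(-r)=1-G_th(r)$ (from evenness of the Gaussian and $h(-r)=1-h(r)$) gives $\cm(0;G_th)=\tcm(0;G_th)<\infty$, so finiteness and equality of $\cm(0;G_t\rho)$ and $\tcm(0;G_t\rho)$ follow.

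For the second claim, the central identity comes from Fubini:
\begin{equation*}
\cm(0;G_\delta u)\;=\;\int u(r')\,\Phi(r'/\sqrt{\delta})\, dr',\qquad
\Phi(z):=\int_{-\infty}^{z}\frac{e^{-s^2/2}}{\sqrt{2\pi}}\,ds.
\end{equation*}
Writing $u=h+f$ again and using $\int f=0$, this becomes
\begin{equation*}
\cm(0;G_\delta u)-\cm(0;G_\delta h)\;=\;\int f(r')\bigl[\Phi(r'/\sqrt{\delta})-\tfrac12\bigr]\, dr'\;\ge\;0,
\end{equation*}
because the integrand is pointwise non-negative: for $r'>0$ both factors are $\ge 0$, while for $r'<0$ both are $\le 0$. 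A direct computation gives $\cm(0;G_\delta h)=\sqrt{\delta/(2\pi)}$, so $\cm(0;G_\delta u)\ge \sqrt{\delta/(2\pi)}$ uniformly on $\mathcal R\cup\mathcal U$. Combined with the median preservation established above and Lemma \ref{lemma8}, this forces $\scr^{j\delta}(G_\delta u)>0$ and $\scl^{j\delta}(G_\delta u)<0$ whenever $\sqrt{\delta/(2\pi)}>j\delta$, i.e.\ for $\delta<\delta(j):=1/(2\pi j^2)$. The final assertion $\Ga^{j\delta}(G_\delta u)\in\mathcal R^*$ is then immediate from the definition of $\Ga^\delta$.

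The main obstacle is precisely this uniformity in $u$: the naive pointwise convergence $G_\delta u\to u$ as $\delta\to 0$ would give $\cm(0;G_\delta u)>j\delta$ only for $\delta$ depending on $u$, and breaks down completely at $u=h$, where $\cm(0;u)=0$. The decomposition $u=h+f$ together with the opposite signs of $f$ on the two half-lines bypasses this by reducing the problem to a lower bound on the explicitly computable Heaviside contribution, whose size $\sqrt{\delta/(2\pi)}$ dominates $j\delta$ for small $\delta$.
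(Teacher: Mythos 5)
Your proof is correct and follows essentially the same route as the paper: both hinge on comparing $G_\delta u$ with the heat evolution of the Heaviside profile $h$, whose mass to the right of the origin is of order $\sqrt{\delta}$ and therefore beats $j\delta$ for small $\delta$, the paper phrasing the comparison as ``$u$ is stochastically larger than $h$'' while you make the same domination explicit through the decomposition $u=h+f$ with $f\ge 0$ on $\mathbb R_+$, $f\le 0$ on $\mathbb R_-$ and $\int f=0$. The same decomposition also gives a clean derivation of the median preservation (the paper does this directly by symmetry of $G_t$ and Fubini), and as a small bonus your argument yields the explicit threshold $\delta(j)=1/(2\pi j^2)$.
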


\begin{proof} Since $\rho \in \mathcal R\cup \mathcal U$, we have $\cm(0;\rho)
  <\infty$.  Then for any $t>0$:
  \begin{eqnarray*}
 && \hskip-1cm \cm(0;G_t\rho) =\int_0^{\infty} dr \int_{-\infty}^{+\infty} dr' G_t(r,r') \rho(r') \\
 & &\qquad \le \int_0^{\infty} dr \int_{0}^{+\infty} dr' G_t(r,r') \rho(r') +
  \int_0^{\infty} dr \int_{-\infty}^{0} dr' G_t(r,r')
  \;\le\;    \cm(0;\rho) +   c,
  \end{eqnarray*}
  where we used Fubini and $\int G_t(r,r') dr' =1$ to bound the first
  term.  Since $G_t\rho \in [0,1]$, $\cm(r;G_t\rho) <\infty$ for all $r$ (see the
  beginning of the proof of Lemma \ref{lemma8}).  An analogous argument shows
  that $\tcm(r;G_t\rho) <\infty$ for all $r$ and $t>0$ and, being the solution of
  the heat equation, $G_t\rho\in C^{\infty}$ for all $t>0$.  To prove that
  $G_t\rho\in \mathcal U$ it remains to show that $\cm(0;G_t\rho)=\tcm(0;G_t\rho)$
  for all $t>0$. \paf{Using the symmetry properties of $G_t$ and Fubini, for any $t\ge 0$ we have
  \[
  \cm(0;\rho_t)-\tcm(0;\rho_t)=\int_0^{\infty} dr G_t\rho(r) - \int_{-\infty}^0 dr
  [1-G_t\rho(r)]= 0.
  \]}%
  The last statement in Lemma \ref{lemma6} follows from the
  following inequalities
  \[ \int_{\sqrt \delta}^\infty dr \,G_\delta u(r) \ge \int_{\sqrt
    \delta}^\infty dr \,G_\delta h(r) \ge C\, \sqrt \delta,\quad C>0
  \]
  which is larger than $j\delta$ for $\delta$ small enough.  The first
  inequality follows from the fact that $u$ is stochastically larger than the
  Heaviside density $h$, in the sense that $u$ is obtained from $h$ by moving
  mass to the right. The last inequality follows from direct computations.
  \qed \end{proof}
\vskip.5cm

\paragraph{Density delta evolutions} We are finally ready to define the density delta
evolutions $\rho^{\delta,\pm}_t$. Restrict to $\delta\le \delta(j)$ as defined
in Lemma \ref{lemma6}, take the initial density $\rho\in\mathcal
R\cup \mathcal U$ and define iteratively $\rho^{\delta,-}_0=\rho$ and
\begin{equation}
  \label{r1t}
\rho^{\delta,-}_t\;:=\;
  \begin{cases}
       G_{t-n\delta}\rho^{\delta,-}_{n\delta},
  &t\in[n\delta,(n+1)\delta) \\
\Ga^{j\delta}\rho^{\delta,-}_{t-},& t=n\delta.
  \end{cases}
\end{equation}
The evolution
 $\rho_t^{\delta,+}$ is defined as $\rho^{\delta,-}_t$ but with
initial datum $\rho_0^{\delta,+}:=\Ga^{j\delta}(\rho)$. 

\medskip
\paragraph{Interface delta evolutions}
The interface delta evolutions are defined as follows. Fix an initial interface
$\phi$ belonging to the cone with vertex at the origin and for $t\ge 0$ define
iteratively $\phi^{\delta,-}_0=\phi$, $\phi^{\delta,+}_0=\max\{\phi,V_{(0,\delta
  j)}\}$ and for $n\ge0$,
\begin{equation}
  \label{i8}
  \begin{array}{rclcl}
  \phi^{\delta,\pm}_t\;&:=&\;      G_{t-n\delta}\phi^{\delta,-}_{n\delta},
  &\text{if}&t\in[n\delta,(n+1)\delta) \\
\phi^{\delta,-}_t\;&:=&\; \max\{\phi^{\delta,-}_{t-},V_{(0,n
\delta j)}\},&\text{if}& t=n\delta.\\
\phi^{\delta,+}_t\;&:=&\; \max\{\phi^{\delta,+}_{t-},V_{(0,(n+1)
\delta j)}\},&\text{if}& t=n\delta.
  \end{array}
\end{equation}
We leave the proof of the following Lemma to the reader. It relates both
definitions.
\begin{lemma}
  \label{i1}
  The delta density evolutions $\rho^{\delta,\pm}_t$ defined in \eqref{r1t} and
  the delta interface evolutions $\phi^{\delta,\pm}_t$ defined in \eqref{i8} are
  related by
\begin{equation}
  \label{4.22}
 \phi^{\delta,\pm}_{t}(r)-  \phi^{\delta,\pm}_{t}(r') =  2 (r-r')  -
 2 \int_{r'}^r \rho^{\delta,\pm}_{t}(r'')dr''.
\end{equation}
The initial data are related by $\phi(0) = \int_{-\infty}^0 (1-\rho(r)) dr$
(this is the same as $\int^{\infty}_0 \rho(r) dr$ as $\rho\in\cR$); this fixes
the vertex of the cone of $\phi$ at the origin.
\end{lemma}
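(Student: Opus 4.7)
The plan is to prove \eqref{4.22} by induction over the number $n$ of jump times that have occurred by time $t$, checking that the identity holds at $t=0$, is preserved by the heat flow on each open interval $(n\delta,(n+1)\delta)$, and is preserved by the jump update at $t=n\delta$. It is convenient to work with the differential form of \eqref{4.22}, namely $\partial_r\phi^{\delta,\pm}_t=2(1-\rho^{\delta,\pm}_t)$ in the distributional sense, together with the initial normalization $\phi(0)=\int_{-\infty}^0(1-\rho(r))dr$ that pins down the constant of integration.

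For the base case $t=0$ with the minus version, the identity is the integrated form of the hydrodynamic limit of the microscopic relation \eqref{2.2.1} between $\xi$ and $\eta=\dD(\xi)$: on the two wings of $\phi$, which coincide with the cone $V_o$, the derivative is constant and matches the plateau values of $\rho$, while in the interior $(\scl(\rho),\scr(\rho))$ the identity is the defining link between interface and density, and the stated normalization at $r=0$ fixes the constant. The plus initial data $\phi^{\delta,+}_0=\max\{\phi,V_{(0,j\delta)}\}$, $\rho^{\delta,+}_0=\Ga^{j\delta}\rho$ is then handled jointly with the jump step below, since it is exactly one iteration of the same update rule.

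For the heat-flow step on an open interval $(n\delta,(n+1)\delta)$, both $\phi^{\delta,\pm}_t$ and $\rho^{\delta,\pm}_t$ are $G_{t-n\delta}$ applied to their values at $n\delta$. Since $G_t$ is convolution against a symmetric Gaussian it commutes with $\partial_r$ and satisfies $G_t\cdot c=c$ for constants $c$. Assuming inductively that $\partial_r\phi^{\delta,\pm}_{n\delta}=2-2\rho^{\delta,\pm}_{n\delta}$, one gets $\partial_r(G_{t-n\delta}\phi^{\delta,\pm}_{n\delta})=G_{t-n\delta}(2-2\rho^{\delta,\pm}_{n\delta})=2-2G_{t-n\delta}\rho^{\delta,\pm}_{n\delta}$, and integrating from $r'$ to $r$ restores \eqref{4.22} at time $t$.

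At a jump $t=n\delta$, the interface is replaced by $\max\{\phi_{t-},V_{(0,nj\delta)}\}$ (with the analogous cone for the plus version) and the density by $\Ga^{j\delta}\rho_{t-}$. Using \eqref{4.22} at time $t-$ together with the asymptotic cone structure of $\phi_{t-}$, the equation $\phi_{t-}(r^*)=V_{(0,nj\delta)}(r^*)$ on the right wing reduces to the mass condition $\int_{r^*}^{\infty}\rho_{t-}(r)dr=j\delta$, i.e.\ $r^*=\scr^{j\delta}(\rho_{t-})$; a symmetric computation on the left yields $\scl^{j\delta}(\rho_{t-})$. Beyond these matching points the updated interface coincides with the shifted cone, which corresponds on the density side to $\rho=0$ on the right and $\rho=1$ on the left, while on $(\scl^{j\delta},\scr^{j\delta})$ the $\max$ leaves both $\phi_{t-}$ and $\rho_{t-}$ unchanged. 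This is exactly the prescription defining $\Ga^{j\delta}\rho_{t-}$, so \eqref{4.22} is preserved across the jump, and induction on $n$ gives the lemma for all $t\ge 0$.

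The main obstacle is the matching-point computation in the jump step: one must carefully identify the transition points of the $\max$ operation with the quantiles $\scl^{j\delta}$ and $\scr^{j\delta}$ defining $\Ga^{j\delta}$, using \eqref{4.22} at time $t-$ and the asymptotic cone structure of $\phi_{t-}$ (which is preserved in the asymptotic sense by the heat flow). The hypothesis $\delta<\delta(j)$ from Lemma~\ref{lemma6} enters here to guarantee $\scl^{j\delta}<\scr^{j\delta}$, so that the three-region picture (left cone, unchanged interior, right cone) is non-degenerate and $\Ga^{j\delta}\rho_{t-}\in\cR^*$; the heat-flow step and the base case are essentially formal.
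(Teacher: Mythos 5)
The paper gives no proof of this lemma (it is explicitly ``left to the reader''), so there is nothing to compare your argument against; I can only assess it on its own terms. Your plan --- induction over the jump times, passing to the differential form of \eqref{4.22}, using that $G_t$ commutes with $\partial_r$ and preserves constants on the open intervals, and matching the $\max$-with-a-cone update to $\Ga^{j\delta}$ at the jump times --- is the natural and essentially correct strategy, and the heat-flow step is fine.

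The gap lies exactly in the two verifications you defer. First, the base case: with $\partial_r\phi=2(1-\rho)$ the wings of $\phi$ would have slopes $2$ (where $\rho=0$) and $0$ (where $\rho=1$), which do not match the slopes $\pm1$ of the cone $V_o$; so your claim that ``the derivative \dots matches the plateau values of $\rho$'' is numerically false, and with those slopes the $\max$ with a cone would never truncate the right wing at all. The relation consistent with \eqref{2.2.1}, \eqref{5.7} and the stationary pair \eqref{6.2} is $\partial_r\phi=1-2\rho$, i.e.\ $\phi(r)-\phi(r')=(r-r')-2\int_{r'}^{r}\rho$, with normalization $\phi(0)=\int_0^\infty\rho+\int_{-\infty}^0(1-\rho)$ (cf.\ \eqref{2.5}); as printed, \eqref{4.22} and the stated value of $\phi(0)$ carry misplaced factors of $2$ which your proof inherits without noticing. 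Second, the jump step: with the corrected relation one has $\phi_{t-}(r)=2\cm(r;\rho_{t-})+r+c$ on the right half-line, so the crossover $\phi_{t-}(r^*)=r^*+c'$ with a cone occurs where $\cm(r^*;\rho_{t-})=(c'-c)/2$. Matching the $\max$ update to $\Ga^{j\delta}$, which cuts at the $j\delta$-quantile, therefore requires the cone to rise by $2j\delta$ per jump, i.e.\ the vertices in \eqref{i8} should be $V_{(0,2nj\delta)}$ (consistent with the microscopic vertex motion $(A_t-B_t,A_t+B_t)$ and with the harness update \eqref{a8}); with $V_{(0,nj\delta)}$ the truncation lands at the $j\delta/2$-quantile and the stated correspondence fails. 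Your assertion that the matching ``reduces to the mass condition $\int_{r^*}^{\infty}\rho_{t-}=j\delta$'' is precisely the computation that must be carried out, and carrying it out is what detects and fixes these constants. With the corrected identity, normalization and cone heights, your induction does go through.
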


   \vskip.5cm

\subsection{{\bf Hydrodynamic limit of the delta particle process.}}

We now study the hydrodynamic limit for the particle delta process defined in
Subsection \ref{d1p}.
We
introduce partitions $\mathcal D^{(\ell)}$ of $\Z$ into intervals $I^{(\ell)}$
of length $\ell$ where, denoting by $I^{(\ell)}_x$ the interval which contains
$x$, $I^{(\ell)}_0=[0,\ell-1]$ ($\mathcal D^{(\ell)}$ is now completely
specified). We take $\ell$ equal to the integer part of $\vep^{-\beta}$ with $\beta\in
(0,1)$, $\rho\in L^\infty( \mathbb R,[0,1])$ and (by an abuse of notation) we
write
 \begin{equation}
  \label{4.1}
 \mathcal A^{(\ell)}_x(\eta)= \frac 1{\ell} \sum_{y\in I^{(\ell)}_x} \eta(y),\quad
  \mathcal A^{(\ell)}_x(\rho)= \frac 1{\vep\ell} \int_{\vep I^{(\ell)}_x} \rho(r)dr
\end{equation}
not making explicit the dependence on $\vep$.

Introduce an accuracy parameter of the form~$\vep^{\alpha}$, $0<\alpha<\beta$;
the parameter $\beta \in (0,1)$ is fixed while $\alpha$ will change \dpred{ at each step of the
iteration scheme used in the sequel}.
Let $\mathcal G_{\vep,\alpha,\beta} (\rho)$be the set of
particle configurations which $(\vep,\alpha,\beta)$-\emph{recognize} the
macroscopic density $\rho \in \mathcal R$ defined by:
 \begin{eqnarray}
  \label{4.2}
  &&\mathcal G_{\vep,\alpha,\beta} (\rho) =  \Big\{\eta \in \cX: |\vep
  \scl(\eta)-\scl(\rho)| + |\vep \scr(\eta)-\scr(\rho)| \le
  \vep^{\alpha},
  \sup_{x\in \Z\setminus \{I_\scr\cup I_\scl\}}| \mathcal A^{(\ell)}_x(\eta)-
  \mathcal A^{(\ell)}_x(\rho)| \le \vep^{\alpha} \Big\},\nonumber\\
&&
\qquad\qquad\qquad \ell=\text{ integer part of }\vep^{-\beta},\;\;0<\alpha<\beta<1,
\end{eqnarray}
where $ \scl(\eta)$ and $\scr(\eta)$ are defined in \eqref{a2.1}, and $I_\scl$
is the smallest $\mathcal D^{(\ell)}$ measurable interval which contains both
$\scl(\eta)$ and $\vep^{-1} \scl(\rho)$; $I_\scr$ is defined analogously with
reference to $\scr(\eta)$ and $\scr(\rho)$.

\begin{proposition}
\label{thm7}
Let $\rho \in \mathcal R$ and $\eta^{\vep,\delta,\pm}_0\in \mathcal
G_{\vep,\alpha,\beta}(\rho)$, then for any \dpred{ $\alpha'\in (0,\alpha)$ such that
$\alpha'< \min\{\frac \beta 2,1-\beta,\frac 14\}$}
%
the following holds: for any $k\ge 1$
there are coefficients $c_k$ so that
   \begin{equation}
  \label{4.6}
  P\Big[\eta^{\vep,\delta,\pm}_{\vep^{-2}\delta} \in \mathcal
  G_{\vep,\alpha',\beta}(\rho^{\delta,\pm}_\delta) \Big] \ge 1 -c_k\vep^k
  \end{equation}
\end{proposition}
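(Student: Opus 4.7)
By construction \eqref{g93} the minus process on $[0,\vep^{-2}\delta]$ consists of pure SSEP followed by a single erasure $\Gamma^{A,B}$ with $A=A_{\vep^{-2}\delta}$, $B=B_{\vep^{-2}\delta}$ independent Poisson of mean $j\delta/\vep$. The plus version additionally applies $\Gamma$ at time $0$, so it is a concatenation erasure/SSEP/erasure and is handled by the same arguments applied twice. My plan is to treat the two phases separately and paste them by the Markov property, matching $\eta^{\vep,\delta,-}_{\vep^{-2}\delta}$ to $\rho^{\delta,-}_\delta=\Ga^{j\delta}(G_\delta\rho)$.

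\emph{SSEP phase.} Let $\eta^{SSEP}_s$ denote the pure SSEP evolved from $\eta_0\in\mathcal G_{\vep,\alpha,\beta}(\rho)$. I would prove the intermediate bulk density estimate: with probability $\ge 1-c_k\vep^k$,
\[
|\cA^{(\ell)}_x(\eta^{SSEP}_{\vep^{-2}\delta})-\cA^{(\ell)}_x(G_\delta\rho)|\le\vep^{\alpha''}\quad\text{uniformly in }x\text{ over any macroscopic compact},
\]
for any $\alpha''<\min\{\alpha,\beta/2,1-\beta\}$. The mean is exactly $\sum_y p_{\vep^{-2}\delta}(x,y)\eta_0(y)$ by SSEP self-duality and converges to $G_\delta\rho$ with polynomial error via the local CLT, together with the initial accuracy $\vep^\alpha$ (giving $\alpha''<\alpha$) and the discretization scale $\vep^{1-\beta}$ (giving $\alpha''<1-\beta$). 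Concentration around the mean uses the negative correlations of SSEP (equivalently, coalescing random walk duality) in a Hoeffding-type bound on the block average of $\ell=\vep^{-\beta}$ variables, producing Gaussian tails of width $\vep^{\beta/2}$ (giving $\alpha''<\beta/2$); a union bound over $O(\vep^{-(1-\beta)})$ boxes absorbs the polynomial prefactor into the exponential and yields the $c_k\vep^k$ bound.

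\emph{Erasure phase.} Standard Poisson concentration gives $|A-j\delta/\vep|+|B-j\delta/\vep|\le\vep^{-1/2-\eta}$ off a super-polynomially small event for any $\eta>0$. By Lemma~\ref{lemma6}, $G_\delta\rho\in\mathcal U_{j\delta}$, hence $G_\delta\rho$ is $C^1$ with strictly positive density at its $j\delta$-quantile $\scr^{j\delta}(G_\delta\rho)>0$, and the mass function $\cm(\cdot;G_\delta\rho)$ is a local diffeomorphism in a macroscopic neighborhood of that quantile. Combining the bulk density estimate of the SSEP phase (extended across the tail $r\ge\scr(\rho)$ by exponential control of the rightmost SSEP particle, which is dominated by a nearest-neighbor random walk) with the Poisson concentration, inversion of $\cm$ gives $|\vep\scr^B(\eta^{SSEP}_{\vep^{-2}\delta})-\scr^{j\delta}(G_\delta\rho)|\le\vep^{\alpha'}$ on the good event, and symmetrically for $\scl^A$. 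Since the erasure $\Gamma^{A,B}$ alters only sites $\ge\scr^B$ or $\le\scl^A$, which are absorbed into the boundary windows $I_\scr\cup I_\scl$, the bulk averages $\cA^{(\ell)}_x$ for $x\notin I_\scr\cup I_\scl$ are unchanged from the SSEP phase and coincide with those of $\Ga^{j\delta}(G_\delta\rho)=\rho^{\delta,-}_\delta$ up to $\vep^{\alpha'}$.

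The principal obstacle is controlling particle counts throughout the tail region $r>\scr(\rho)$, which is initially empty but is populated diffusively by SSEP over a macroscopic window of size $O(\sqrt\delta)$: locating the new boundary $\scr(\eta^{\vep,\delta,-}_{\vep^{-2}\delta})$ precisely demands quantitative local CLT estimates with exponential tails valid inside this tail layer, coupled with an a-priori stochastic domination of the rightmost SSEP particle. The combined error budget---Poisson $\vep^{1/2}$ count fluctuation, bulk density approximation $\vep^{\alpha''}$, local CLT discretization, and the slope of $G_\delta\rho$ near its $j\delta$-quantile---is what produces the sharper restriction $\alpha'<1/4$ in place of the naive $1/2$.
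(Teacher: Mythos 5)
Your decomposition is the same as the paper's: evolve with pure SSEP for time $\vep^{-2}\delta$, prove a block-average density estimate against $G_\delta\rho$, localize the quantiles $\scr^B,\scl^A$ via Poisson concentration and the strict positivity of $G_\delta\rho$ near $\scr^{j\delta}(G_\delta\rho)$ (Lemma \ref{lemma6}), control the tail beyond $\scr(\rho)$, and observe that $\Gamma^{A,B}$ only touches the boundary windows $I_\scr\cup I_\scl$; the erasure-phase paragraph is essentially the paper's ``quantile bounds'' argument. The one genuinely different ingredient is the concentration step: the paper centers the occupation variables around the solution $u_t$ of the discrete heat equation and bounds the $2n$-th moment of $\cA^{(\ell)}_x(T_{\vep^{-2}t}\eta)-\cA^{(\ell)}_x(u_{\vep^{-2}t})$ by $c(\vep^{\beta n}+[t\vep^{-2}]^{-n/4})$ using the $v$-function estimates \eqref{4.7} from \cite{demasipresutti}, then applies Chebyshev at high power; you instead invoke negative dependence of SSEP to get Hoeffding-type Gaussian tails at scale $\ell^{-1/2}=\vep^{\beta/2}$. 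That route is legitimate, but two points need fixing. First, the parenthetical ``(equivalently, coalescing random walk duality)'' is wrong: SSEP is self-dual with finitely many exclusion walkers, not dual to coalescing walks; the negative association you need is a nontrivial theorem (symmetric exclusion preserves negative association) and must be cited as such, not derived from duality in passing. Second, your accounting of the exponent $1/4$ is off: in the paper it comes precisely from the $t^{-n/8}$ decay of the $v$-functions, which at $t=\delta\vep^{-2}$ yields fluctuations of order $\vep^{1/4}$; under your Gaussian concentration scheme no $1/4$ barrier arises at all (the binding constraints would be $\alpha'<\min\{\alpha,\beta/2,1-\beta\}$, since the Poisson count fluctuation contributes only $\vep^{1/2}$ to the quantile location), so your claim that your error budget ``produces'' the restriction $\alpha'<1/4$ is internally inconsistent — you would simply be proving a slightly stronger statement than asserted. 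Neither issue invalidates the proof, but both should be corrected before the argument is written out.
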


\medskip
\noindent
{\bf Remark.} By iteration the result extends to any finite macroscopic time
interval and we \dpred{ also have}:

\vskip.5cm

\begin{corollary}
\label{thm10}
Under the same assumptions of Proposition \ref{thm7}, for any integer $m\ge 1$
and for any $k\ge 1$ there are coefficients $c_k$ so that
\begin{equation}
  \label{4.6a}
  P\Big[\bigcap_{n=1}^m\{\eta^{\vep,\delta,\pm}_{\vep^{-2}n\delta} \in \mathcal
  G_{\vep,\alpha',\beta }(\rho^{\delta,\pm}_{n\delta})\} \Big] \ge 1 -c_k\vep^k
\end{equation}

\end{corollary}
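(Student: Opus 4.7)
The plan is to prove the corollary by induction on $n=1,\dots,m$, iterating Proposition~\ref{thm7} and invoking a union bound. Two preparatory steps are needed: choosing a sequence of intermediate accuracy parameters so that each application of the proposition is legal, and checking that the intermediate macroscopic densities remain in $\mathcal R$ so that the hypothesis of Proposition~\ref{thm7} is satisfied at each step.

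First, I would fix a strictly decreasing sequence $\alpha=\alpha_0>\alpha_1>\cdots>\alpha_m=\alpha'$, for instance by linear interpolation in $[\alpha',\alpha]$, chosen so that $\alpha_n<\min\{\beta/2,\,1-\beta,\,1/4\}$ for every $n\ge 1$. This is possible because $\alpha'$ itself already satisfies this bound by the hypothesis of Proposition~\ref{thm7}. Next, I would check that $\rho^{\delta,\pm}_{n\delta}\in\mathcal R$ for each $n\ge 0$: by \eqref{r1t} and Lemma~\ref{lemma6}, each macroscopic step ends with an application of $\Ga^{j\delta}$ to an element of $\mathcal U$, which produces an element of $\mathcal R^*\subset\mathcal R$ as soon as $\delta<\delta(j)$.

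For the inductive step, suppose that $\eta^{\vep,\delta,\pm}_{(n-1)\delta\vep^{-2}}\in\mathcal G_{\vep,\alpha_{n-1},\beta}(\rho^{\delta,\pm}_{(n-1)\delta})$. By the Markov property of the delta process, together with the stationary-increments property of the Poisson processes $A$ and $B$, the conditional law of the evolution in the window $[(n-1)\delta\vep^{-2},\,n\delta\vep^{-2}]$, given the history up to time $(n-1)\delta\vep^{-2}$, coincides with that of a fresh delta process started from the current configuration and run for time $\vep^{-2}\delta$. Applying Proposition~\ref{thm7} to this restart, with input parameter $\alpha_{n-1}$, output parameter $\alpha_n$ and initial macroscopic density $\rho^{\delta,\pm}_{(n-1)\delta}\in\mathcal R$, yields conditional failure probability at most $c_k\vep^k$ on the inductive event.

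A union bound over $n=1,\dots,m$ then gives total failure probability at most $mc_k\vep^k$, which is of the form $c'_k\vep^k$ for a new constant $c'_k$ depending on the (fixed) $m$. Since $\alpha_n\ge\alpha'$ for every $n$ and $\vep<1$, the inclusion $\mathcal G_{\vep,\alpha_n,\beta}(\cdot)\subset\mathcal G_{\vep,\alpha',\beta}(\cdot)$ converts this into \eqref{4.6a}. All of the genuine analytic work is already contained in Proposition~\ref{thm7}; the only obstacle at this step is bookkeeping the monotone parameter sequence and verifying that the macroscopic iterates stay in $\mathcal R$, both of which are routine.
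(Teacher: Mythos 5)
Your proposal is correct and follows exactly the route the paper intends: the paper disposes of this corollary with the one-line remark ``by iteration the result extends,'' and your argument simply fills in the bookkeeping (the decreasing chain of accuracy exponents $\alpha=\alpha_0>\dots>\alpha_m=\alpha'$, the check via Lemma \ref{lemma6} that $\rho^{\delta,\pm}_{n\delta}\in\mathcal R$, the Markov restart, and the union bound). No discrepancy with the paper's proof.
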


\vskip.5cm

To show Proposition \ref{thm7} we need to control the position
of the quantiles $\scr^a$ and $\scl^b$ of the process evolving with the
exclusion by the macroscopic time $\delta\vep^{-2}$. Here $a= A_{\vep^{-2}t}$,
$b= B_{\vep^{-2}t}$ \dpred{ which are} Poisson processes of parameter $\vep
j$. These bonds only depend on the exclusion dynamics governed by $L_0$.

\emph{Sharp convergence of the
  exclusion process to the solution of the heat equation}. Abusing notation
denote $(T_t\eta)$ the process in $\cX$ with initial configuration $\eta$ evolving
only with the exclusion generator $L_0$. The SSEP evolution is close to a linear
diffusion in the following sense: for any $n\ge 2$ there is $c_n$ so that for
any $t>0$
 \begin{equation}
  \label{4.7}
\sup_{(x_1,\dots,x_n)\in \Z_{\ne}^n}  | v(x_1,\dots,x_n;t)| \le c_n t^{-n/8}
\end{equation}
where $\Z_{\ne}^n$ is the set of all $n$-tuple of mutually distinct elements of $\Z$,
 \begin{equation}
  \label{4.8}
  v(x_1,\dots,x_n;t) = E\Big [ \prod_{i=1}^n \{T_t\eta(x_i) - u_t(x_i)\}\Big]
\end{equation}
\dpred{hereafter called $v$-functions,} and $u_t(x)$ solves the discretized heat equation
 \begin{equation}
  \label{4.9}
  \frac{du_t(x)}{dt}= \frac 12\, \Delta u_t(x)= \frac
  12\Big(u_t(x+1)+u_t(x-1)-2u_t(x)\Big),\quad u_0=\eta
\end{equation}
\eqref{4.7} is proved in \cite{demasipresutti}.
The solution of \eqref{4.9} is
 \begin{equation*}
u_{t}(x) = \sum_{y\in\Z} p_{t}(x,y) \eta(y)
\end{equation*}
$p_t(x,y)$ the transition probability kernel of the symmetric nearest-neighbors
random walk. The solution  of the heat equation starting from
$\rho$ is $G_t\rho$ (recall \eqref{k6}).
Thus, since $\eta\in \mathcal  G_{\vep,\alpha,\beta}(\rho)$
 \begin{equation}
  \label{4.10}
 |u_{\vep^{-2}t}(x)-G_t\rho(\vep x)|\le c' \Big( \vep t^{-1/2} +\vep^{1-\beta}t^{-1/2}+\vep^\alpha
 + \vep t^{-1/2}\vep^{-(1-\alpha)}
 \Big)\le c
 \Big( \vep^{1-\beta}t^{-1/2}+\vep^\alpha\Big)
\end{equation}
\dpred{ The proof of the inequality is done by changing $u_{\vep^{-2}t}(x)$
into  $G_t\rho(\vep x)$ in successive steps:}

\begin{itemize}
\item \dpred{ Replace $p_{\vep^{-2}t}(x,y)$ by $G_t(\vep x, \vep y)$. By
the local central limit theorem the error is bounded by the first term on the right hand side of \eqref{4.10}.}

\item  \dpred{  Replace $G_t(\vep x, \vep y)$ by its average in the intervals $\vep I_z^{(\ell)}$
 of length $\vep^{1-\beta}$, hence the second term on the right hand side of \eqref{4.10}.}

\item \dpred{  The contribution of the difference between averages of $\eta$ and $\rho$
in good intervals (i.e\ those not in $I_\scl\cup I_\scr$) is bounded by $\vep^\alpha$,
the
contribution of the intervals in $I_\scl\cup I_\scr$ by $\vep t^{-1/2} \vep^{-(1-\alpha)}$.}

\item  \dpred{  We finally reconstruct in each interval $\vep I_z^{(\ell)}$ the correct term
from $G_t\rho(\vep x)$ with an error given again by
the second term on the right hand side of \eqref{4.10}.}

\end{itemize}
\medskip

\dpred{ {\sl
Bounds on $|\mathcal A^{(\ell)}_x(T_{\vep^{-2}t}\eta)-
  \mathcal A^{(\ell)}_x(\rho_t)|$}. For any $x\in \mathbb Z$
  \begin{eqnarray}
  \label{4.11.00}
  |\mathcal A^{(\ell)}_x(T_{\vep^{-2}t}\eta)-
  \mathcal A^{(\ell)}_x(\rho_t)| &\le& |\mathcal A^{(\ell)}_x(T_{\vep^{-2}t}\eta)-
 \mathcal A^{(\ell)}_x(u_{\vep^{-2}t})|+|\mathcal A^{(\ell)}_x(u_{\vep^{-2}t})-
  \mathcal A^{(\ell)}_x(\rho_t)| \nonumber\\ &\le& |\mathcal A^{(\ell)}_x(T_{\vep^{-2}t}\eta)-
 \mathcal A^{(\ell)}_x(u_{\vep^{-2}t})|+ c
 \Big( \vep^{1-\beta}t^{-1/2}+\vep^\alpha\Big),
  \end{eqnarray}
 by \eqref{4.10}. 
We are going to show that for any integer $n$,}
 \begin{equation}
  \label{4.11}
\dpred{E\Big[ |\mathcal A^{(\ell)}_x(T_{\vep^{-2}t}\eta)-
  \mathcal A^{(\ell)}_x(u_{\vep^{-2}t})|^{2n}\Big] \le c \Big( \vep^{\beta n}   +
  [t\vep^{-2}]^{-n/4}\Big)}
\end{equation}
\dpred{Proof of \eqref{4.11}:}

\begin{itemize}
\item \dpred{  We expand $|\mathcal A^{(\ell)}_x(T_{\vep^{-2}t}\eta)-
  \mathcal A^{(\ell)}_x(u_{\vep^{-2}t})|^{2n}$ getting a sum of products
  of factors $\eta_{\vep^{-2} t}(z) - u_{\vep^{-2}t}(z)$.}

\item Each term of the form $\Big(\eta_{\vep^{-2} t}(z) -
  u_{\vep^{-2}t}(z)\Big)^k$ with $k>1$ can be rewritten as $c +
  c'(\eta_{\vep^{-2} t}(z) - u_{\vep^{-2}t}(z))$ with constants $c$ and $c'$ not
  depending on $\eta$. $c$ and $c'$ depend on the value of $u_{\vep^{-2}t}(z)$
  but that each of them is always smaller (in absolute value) than one.

\item \dpred{  Thus $E\big[|\mathcal A^{(\ell)}_x(T_{\vep^{-2}t}\eta)-
  \mathcal A^{(\ell)}_x(u_{\vep^{-2}t})|^{2n}\big]$ is a sum of product of constants times $v$-functions.
  We then use \eqref{4.7} to get \eqref{4.11}.}

\end{itemize}

Let $\ga>0$, then  since $\eta\in \mathcal  G_{\vep,\alpha,\beta}(\rho)$,
 \begin{equation*}
   \sum_{x\ge \vep^{-1-\ga}}P[ T_{\vep^{-2}t}\eta(x)=1] \le c'_k\vep^k,\quad
   \sum_{x\le -\vep^{-1-\ga}}P [ T_{\vep^{-2}t}\eta(x)=0] \le
   c'_k\vep^k
\end{equation*}
 As a consequence
  \begin{equation}
   \label{4.12}
   P\Big[  \scr(T_{\vep^{-2}t}\eta)\le  \vep^{-1-\ga};
   \;\;\scl(T_{\vep^{-2}t}\eta)\ge -\vep^{-1-\ga}\Big]\ge 1- c''_k\vep^k
 \end{equation}
By the hypotheses on $\rho$,
 \begin{equation}
  \label{a5.24}
\int_{r\ge \vep^{-\ga}}G_t\rho(r) dr \le c'_k\vep^k,\quad \int_{r\le
  -\vep^{-\ga}} dr [1- G_t\rho(r)] \le c'_k\vep^k.
\end{equation}
\dpred{ which proves that
  \begin{equation}
  \label{4.5.bb}
P\Big[\sup_{|x|\ge
\vep^{-1-\ga}}  | \mathcal A^{(\ell)}_x(T_{\vep^{-2}t}\eta)-
  \mathcal A^{(\ell)}_x(G_t\rho)| \le c'_k\vep^k\Big] \ge 1 -c'''_k\vep^k.
\end{equation}
We shall use \eqref{4.5.bb} to prove that for any $\alpha'$ as in Proposition \ref{thm7}
  \begin{equation}
  \label{4.5}
P\Big[\sup_{x\in \Z}| \mathcal A^{(\ell)}_x(T_{\vep^{-2}t}\eta)-
  \mathcal A^{(\ell)}_x(G_t\rho)| \le \vep^{\alpha'}\Big] \ge 1 -c_k\vep^k.
\end{equation}
By \eqref{4.5.bb} and \eqref{4.11.00} it suffices to prove that for any $\alpha'$ as above
  \begin{equation}
  \label{4.5cc}
P\Big[\sup_{|x|\le \vep^{-1-\ga}}| \mathcal A^{(\ell)}_x(T_{\vep^{-2}t}\eta)-
  \mathcal A^{(\ell)}_x(u_{\vep^{-2}t})| \le \vep^{\alpha'}\Big] \ge 1 -c_k\vep^k.
\end{equation}
which follows using the
Chebishev's inequality with power $2n$ for $n$ sufficiently large and  \eqref{4.11},  because
$\alpha'< \min\{\frac \beta 2,\frac 14\}$.}
%

\medskip

\emph{Quantile bounds.} \dpred{ To complete the proof of \eqref{4.6} we fix $\alpha'$ as in Proposition
\ref{thm7} and  take $\alpha'' < \min\{\frac \beta 2,1-\beta,\alpha,\frac 14\}$ such that
$\alpha'' >\alpha'$.  Then there is a positive $\ga$ such that $\alpha''-2\ga >\alpha'$.

We now fix $t=\delta$ and use \eqref{4.5} (with $\alpha''$) }
to obtain bounds for the quantiles
defined in \eqref{d09} and \eqref{lr}.  Recalling
\eqref{d09} let
\[
\scr':= \vep^{-1}\scr^{j\delta}(G_\delta\rho) + \vep^{-1+\alpha''-2\ga}.
\] Then
 \begin{equation*}
P\Bigl(   \Bigl| \sum_{\scr'\le  x \le \vep^{-1-\ga}}  T_{\vep^{-2}\delta}\eta(x) -
   \vep^{-1}\int_{\vep \scr'}^ {\vep^{-\ga}}G_\delta\rho( r)dr \Bigr|
   \le c \vep^{\alpha''} \vep^{-1-\ga}\Bigr)\;\ge\; 1 - c_k\vep^k,
\end{equation*}
On the other hand by the definition of the quantile
$\scr^{j\delta}(G_\delta\rho)$ and by \eqref{a5.24}
 \begin{eqnarray*}
\int_{\vep \scr'}^ {\vep^{-\ga}}G_\delta\rho( r)dr&=&\int_{\scr^{j\delta}(G_\delta\rho)}^
{\infty}G_\delta\rho( r)dr-
\int_{\scr^{j\delta}(G_\delta\rho)}^ {\vep \scr'}G_\delta\rho( r)dr-\int_{\vep^{-\ga}}^
{\infty}G_\delta\rho( r)dr
\\& \le&   j\delta  -c' \vep^{\alpha''-2\ga}
\end{eqnarray*}
with $c'=\min\{G_\delta\rho(r):|r-\scr^{j\delta}(G_\delta\rho)|\le 1 \}>0$.
Hence with probability $\ge 1 - c_k\vep^k$,
 \begin{equation*}
\vep \sum_{\scr'\le  x \le \vep^{-1-\ga}}  T_{\vep^{-2}\delta}\eta(x) \le
j\delta  -c' \vep^{\alpha''-2\ga}
+ c \vep^{\alpha''} \vep^{-\ga} < j\delta-\frac{c'}2 \vep^{\alpha''-2\ga}
\end{equation*}
Let $\scr^b(\eta)$ be the quantile defined in \eqref{lr}
with $b=B_{\vep^{-2}\delta}$, $B$ a Poisson process of rate $j\vep$.
Observing that for any $\kappa>0$
\begin{equation*}
P\Big(\big|\vep B_{\vep^{-2}\delta}-j\delta\big|\le \vep^{\frac 12
  -\kappa}\Big)\ge 1-c_k\vep^k
\end{equation*}
we get that for $\kappa$ small enough and with probability $\ge 1 - c_k\vep^k$,
\begin{equation*}
\vep \sum_{ x \ge \scr^b(T_{\vep^{-2}\delta}\eta)} T_{\vep^{-2}\delta}\eta(x)=\vep B_{\vep^{-2}\delta}
\;\ge\; j\delta- \vep^{\frac 12-\kappa}
\;\ge\;  j\delta-\frac{c'}2 \vep^{\alpha''-2\ga}
\;\ge\;  \vep \sum_{\scr'\le  x \le \vep^{-1-\ga}}  T_{\vep^{-2}\delta}\eta(x)
\end{equation*}
that implies $\scr^b(T_{\vep^{-2}\delta}\eta)\ge R'= \vep^{-1}\scr^{j\delta}(G_\delta\rho) + \vep^{-1+\alpha''-2\ga}$.
Using an analogous argument for the lower bound we get
\begin{equation}
  \label{f56}
  \begin{array}{rcl}
   P(|\vep \scr^b(T_{\vep^{-2}\delta}\eta) -
\scr^{j\delta}(G_\delta\rho)|\le  \vep^{\alpha''-2\ga})&>&1-c_k\vep^k,\\
P(|\vep \scl^a(T_{\vep^{-2}\delta}\eta) -
\scl^{j\delta}(G_\delta\rho)|\le \vep^{\alpha''-2\ga})&>&1-c_k\vep^k;
  \end{array}
\end{equation}
the second inequality is proved by using the same arguments for
$a=A_{\vep^{-2}\delta}$, $A$ being a Poisson process of rate $\vep j$.

\medskip
\noindent{\bf Proof of Proposition \ref{thm7}.} By the definitions
\eqref{g93} and \eqref{r1t},
 \[
\eta^{\vep,\delta,-}_{\vep^{-2}\delta} =
\Gamma^{A_{\vep^{-2}\delta},B_{\vep^{-2}\delta}}T_{\vep^{-2}\delta}\eta,\qquad
\rho^{\delta,-}_\delta= \Gamma^{j\delta}G_\delta\rho.
\]
Since the left and right boundaries after applying $\Gamma$ are the quantiles
before applying it, inequality \eqref{4.6} for the delta$-$ processes follows from
\eqref{4.5} and \eqref{f56}. The same argument applies for the delta$+$
processes.\qed

\vskip.5cm

\subsection
{\bf Hydrodynamic limit of interfaces}
\label{sub5.4}
\nopagebreak

\medskip

\noindent {\bf Proof of Theorem \ref{thm6a}.} We call  $\tau>0$ the time $t$ fixed in
Theorem \ref{thm6a}.
For each $n\in \mathbb N$ we
let $\delta\in \{ \tau 2^{-n} \}$ and consider
the evolutions $(\xi_t^{\vep,\delta,\pm})$ in a bounded time interval, $t\le T=  2^{N+n}\delta=2^N\tau$, $N$ an arbitrary, fixed non negative integer.
We have by \eqref{2.5},
\begin{eqnarray}
  \label{4.20}
&&\vep \xi^{\vep,\delta,\pm}_{\vep^{-2}(k+1)\delta}(0)-\vep
\xi^{\vep,\delta,\pm}_{\vep^{-2}k\delta}(0)\nonumber\\
&&\qquad= 2 \vep B_{\vep^{-2}(k+1)\delta}
-2\vep B_ {\vep^{-2}k\delta}
+ 2 \vep \sum_{x\ge 0} \eta^{\vep,\delta,\pm}_{\vep^{-2}(k+1)\delta}(x)
-  2 \vep \sum_{x\ge 0} \eta^{\vep,\delta,\pm}_{\vep^{-2}k\delta}(x)
\end{eqnarray}
By \eqref{2.2.1} for $k=1,\dots,2^{n+N}$ and $x>y$,
\begin{equation}
  \label{4.21}
\vep \xi^{\vep,\delta,\pm}_{\vep^{-2}k\delta}(x)-\vep
\xi^{\vep,\delta,\pm}_{\vep^{-2}k\delta}(y)
=  2 {\vep(x-y)}  - 2\vep \sum_{z=y}^{x-1}\eta^{\vep,\delta,\pm}_{\vep^{-2}k\delta}(z)
\end{equation}
By \eqref{4.6a} and \eqref{4.20}--\eqref{4.21}--\eqref{4.22} we then get that
for any $\ga>0$ and any $t\in \{k\delta: k\le 2^{N+n}\}$
\begin{equation}
  \label{4.23}
\lim_{\vep\to 0}P\Big[ \sup_{x\in \mathbb Z} |\vep
\xi^{\vep,\delta,\pm}_{\vep^{-2}t}(x)- \phi^{\delta,\pm}_{t}(\vep x)| \ge
\ga\Big] =0
\end{equation}
In the next section we shall prove that for any $t$:
\begin{equation}
  \label{4.24}
\lim_{n\to \infty} \sup_{r\in \mathbb R} | \phi^{\tau 2^{-n},+}_{t}(r)-\phi^{\tau 2^{-n},-}_{t}(r)| =0
\end{equation}
and that there is a function $\phi^{(\tau)}_t(r)$, $r\in \mathbb R, t\ge 0$, so that
\begin{equation}
  \label{4.25}
\lim_{n\to \infty} \sup_{r\in \mathbb R} | \phi^{(\tau)}_t(r)-\phi^{\tau 2^{-n},-}_{t}(r)| =0
\end{equation}
Then by  \eqref{4.23}, \eqref{4.24}, \eqref{4.25} and \eqref{r111}, for all
$\gamma>0$, 
 \begin{equation}
\label{4.26}
\lim_{\vep\to 0} P\Big[\sup_{x\in \Z} |\vep \xi^{(\vep)}_{\vep^{-2}t}(x)- \phi^{(\tau)}_t(\vep x)|
 \ge \ga\Big]= 0, \quad t\in \mathcal T(\tau)
\end{equation}
where
 \begin{equation}
\label{4.27}
\mathcal T(\tau)=\{k2^{-n}\tau, k\in \mathbb N, n\in \mathbb N\}
\end{equation}
Since $\tau\in \mathcal T(\tau)$ we have proved \eqref{4.00.2} for $t=\tau$ and since $\tau$ was arbitrary,
Theorem~\ref{thm6a} is proved.  \qed

 \setcounter{equation}{0}
\vskip2cm

\section{The macroscopic evolution}
\label{sec:6}

In Subsection \ref{sub6.1} we prove that as $\delta\to 0$ the macroscopic delta
processes converge --that is, we prove \eqref{4.24} and \eqref{4.25}-- and that
$\phi_t$ is well defined by
        \begin{equation*}
\phi_{t}  =\lim_{\delta\to 0} \phi^{\delta,\pm}_{t},\qquad t\ge 0
    \end{equation*}
We also collect
some properties of the macroscopic
evolutions $\phi_t$ and $\rho_t$, in particular monotonicity properties of
$\phi_t$ and existence of boundary points for both motions.
In subsection \ref{sec:7} we construct macroscopic stationary profiles.

\medskip

\subsection{{\bf Existence and regularity of the macroscopic profiles}}  \label{sub6.1}
{\em Proof of \eqref{4.24} and \eqref{4.25}}.
Let $\tau>0$ and $\delta \in \{\tau 2^{-n}, n\in \mathbb N\}$.  We shall first prove by induction on $k$ that for any such $\delta$,
\begin{equation}
  \label{5.1}
 \sup_{r\in \mathbb R} | \phi^{\delta,+}_{k\delta}(r)-\phi^{\delta,-}_{k\delta}(r)| \le j\delta.
\end{equation}
\eqref{5.1} holds for $k=0$ because
\[
\phi^{\delta,+}_{0}(r) = \max\{ \phi_0(r),j\delta +|r|\},\quad \phi^{\delta,-}_{0}(r) =  \phi_0(r).
\]
Suppose next that \eqref{5.1} holds for $k-1$, then by the maximum principle (for the linear heat equation)
and calling $t=(k\delta)^-$,
\[| \phi^{\delta,+}_{t}(r)-\phi^{\delta,-}_{t}(r)| \le j\delta \]
hence \eqref{5.1} holds for $k$ because
\[
\phi^{\delta,+}_{k \delta}(r) = \max\{  \phi^{\delta,+}_{k \delta}(r),j(k+1)\delta +|r|\},\quad
\phi^{\delta,-}_{k \delta}(r) =  \max\{  \phi^{\delta,-}_{k \delta}(r),jk\delta+ |r|\}
\]
 \eqref{5.1} and  \eqref{4.24} are thus proved.

 It is not difficult to see that
     \begin{eqnarray*}
 \phi^{\delta,-}_{t}(r)\le \phi^{\delta',-}_{t}(r),\quad \phi^{\delta ,+}_{t}(r)\ge \phi^{\delta',-}_{t}(r),\qquad \delta=k\delta' {\text { for some integer }}k>0
            \end{eqnarray*}
Thus for any $n\in \mathbb N$ and $t\ge 0$
\begin{equation}
  \label{5.2}
 \phi^{\tau 2^{-n},-}_{t}(r)\le \phi^{\tau 2^{-(n+1)},-}_{t}(r) \le \phi^{\tau 2^{-n} ,+}_{t}(r)
\end{equation}
Hence for any fixed $t$, $\phi^{\tau 2^{-n},-}_{t}(r)$ converges pointwise
to a function that we call $\phi_t^{(\tau)}(r)$ (it may
depend on $\tau$).
Since by definition $|\phi^{\delta,\pm}_t(r)-\phi^{\delta,\pm}_t(r')|\le |r-r'|$,
the convergence is in sup norm and
\eqref{4.25} is then proved with $\phi_t^{(\tau)}(r)$ a Lipschitz function with
Lipschitz constant 1. \qed

\medskip
In the next Theorem we prove   that  $\phi_t^{(\tau)}$ is independent of $\tau$ and also regularity properties of this function.
\vskip.2cm

\begin{theorem}
\label{thm12}
\dpred{ The function  $\phi_t^{(\tau)}$  is independent of $\tau$ and will be denoted by
$\phi_t$ (the same as in Theorem \ref{thm6a}). $\phi_t$ } is continuous in $r$ and $t$,
more precisely there is $c>0$ so that for all $t$, $t'$ such that $|t-t'|\le 1$
and all $r$ and $r'$,
\begin{equation}
  \label{5.5}
|\phi_{t}(r) -\phi_{t'}(r)  |\le c \sqrt{|t-t'|},\;\;\;
|\phi_{t}(r) -\phi_{t}(r ') |\le |r-r'|
\end{equation}
Denoting by $\delta_n$ any sequence of positive numbers such that $\delta_{n+1}=\delta_n/2$ then
\begin{equation}
  \label{5.6}
\phi_{t}  =\lim_{n\to \infty} \phi^{\delta_n,\pm}_{t},\qquad \forall t\ge 0,
\end{equation}
with $\phi^{\delta_n,-}_{t}$ monotonically increasing and $\phi^{\delta_n,+}_{t}$ monotonically decreasing.

\end{theorem}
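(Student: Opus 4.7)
I first extend the gap estimate \eqref{5.1} and the monotonicity \eqref{5.2} from the grid $\{k\delta\}$ to all $t\ge 0$: for $t\in[k\delta,(k+1)\delta)$ both $\phi^{\delta,\pm}_t$ are obtained by applying the same heat propagator $G_{t-k\delta}$ to $\phi^{\delta,\pm}_{k\delta}$, and $G_s$ is an $L^\infty$-contraction preserving the pointwise order. An analogous induction gives monotonicity of $\phi^{\delta,+}_t$ in $\delta$. Along $\delta_n=\tau 2^{-n}$, $\phi^{\delta_n,-}_t$ is then nondecreasing and $\phi^{\delta_n,+}_t$ nonincreasing, with sandwich gap at most $j\delta_n\to 0$; both converge uniformly in $r$ to a common limit $\phi^{(\tau)}_t$, proving \eqref{5.6}. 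Since each $\phi^{\delta_n,\pm}_t$ is 1-Lipschitz in $r$ by \eqref{4.22} (using $\rho^{\delta_n,\pm}_t\in[0,1]$), so is the limit.

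\textbf{H\"older-$\tfrac12$ in $t$.} This is the heart of the proof. The key observation is that the jump operator $J_k\phi:=\max\{\phi,V_{(0,k\delta j)}\}$ lifts the interface by at most $j\delta$ pointwise, because by induction on $k$ one has $\phi^{\delta,-}_{k\delta-}\ge V_{(0,(k-1)\delta j)}$: the base case is $\phi_0\ge V_o$, and the inductive step uses $G_\delta V_{(0,c)}(r)=E|r+\sqrt{\delta}\,Z|+c\ge V_{(0,c)}(r)$ together with monotonicity of $G_\delta$. Thus
\[
J_k\phi^{\delta,-}_{k\delta-}-\phi^{\delta,-}_{k\delta-}\;\le\;V_{(0,k\delta j)}-V_{(0,(k-1)\delta j)}\;=\;j\delta.
\]
Decomposing the evolution from $t'$ to $t$ as heat propagators interleaved with jumps, the pointwise bounds $\mathrm{id}\le J_k\le\mathrm{id}+j\delta$ and monotonicity of $G_s$ give
\[
G_{t-t'}\phi^{\delta,-}_{t'}\;\le\;\phi^{\delta,-}_t\;\le\;G_{t-t'}\phi^{\delta,-}_{t'}+j(t-t')+j\delta.
\]
For any 1-Lipschitz $\phi$, $|G_s\phi(r)-\phi(r)|=|E[\phi(r+\sqrt{s}\,Z)-\phi(r)]|\le\sqrt{s}\,E|Z|$, so $|\phi^{\delta,-}_t-\phi^{\delta,-}_{t'}|_\infty\le c\sqrt{|t-t'|}+j|t-t'|+j\delta$; letting $\delta\to 0$ yields \eqref{5.5} for $|t-t'|\le 1$. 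The main obstacle is precisely this uniform jump bound: without it, the $O(1/\delta)$ individual heat estimates of size $\sqrt{\delta}$ would accumulate to a divergent total.

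\textbf{$\tau$-independence.} By Theorem~\ref{thm6a}, for each $t>0$ the in-probability limit $\phi_t:=\lim_{\vep\to 0}\vep\xi^{(\vep)}_{\vep^{-2}t}$ exists and is unique, and the proof of that theorem identifies it with $\phi^{(\tau)}_t$ whenever $t\in\mathcal T(\tau)$. Hence $\phi^{(\tau)}$ coincides with the globally-defined $\phi$ on the dense subset $\mathcal T(\tau)$ of $\mathbb R_+$; together with the H\"older continuity from the previous step, this forces $\phi^{(\tau)}\equiv\phi$ independently of $\tau$.
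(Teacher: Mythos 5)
Your first two steps are correct and follow essentially the same route as the paper: the sandwich and monotonicity in $\delta$ along the dyadic sequence give \eqref{5.6} and the Lipschitz bound in $r$, and your time-H\"older argument is exactly the paper's inequality \eqref{5.3} --- the inductive fact $\phi^{\delta,-}_{k\delta}\ge V_{(0,kj\delta)}$ showing that each cone-update lifts the interface by at most $j\delta$, hence $G_{t-t'}\phi^{\delta,-}_{t'}\le \phi^{\delta,-}_{t}\le G_{t-t'}\phi^{\delta,-}_{t'}+j(t-t')+j\delta$, followed by $|G_s\phi-\phi|\le c\sqrt s$ for Lipschitz $\phi$.

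The $\tau$-independence step, however, has a genuine gap. You argue that $\phi^{(\tau)}$ agrees with the ``globally-defined'' $\phi$ on the dense set $\mathcal T(\tau)$ and that H\"older continuity then forces $\phi^{(\tau)}\equiv\phi$. But the only continuity you have established is that of each $\phi^{(\tau)}$ separately; the function $\phi_t$ produced by the proof of Theorem \ref{thm6a} is, at this stage, the diagonal $\phi^{(t)}_t$, and its continuity in $t$ is essentially equivalent to the $\tau$-independence you are trying to prove. Two continuous functions $\phi^{(\tau)}$ and $\phi^{(\tau')}$ that each agree with a possibly discontinuous $\phi$ on two \emph{different} dense sets need not coincide, and here the two dense sets can be disjoint: $\mathcal T(\tau)\cap\mathcal T(\tau')=\{k2^{-n}\tau\}\cap\{k'2^{-n'}\tau'\}$ is empty whenever $\tau/\tau'$ is irrational, so there is no common dense set on which to compare them. (This is exactly the case the paper singles out with ``we next suppose that $\tau$ and $\tau'$ are not related in such a way.'')

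The missing ingredient is a quantitative comparison between the two discretizations on incommensurable grids. The paper takes $\delta\le\delta'$ with $k\delta<\delta'<(k+1)\delta$ and shows, by the same cone-lifting mechanism you used for the H\"older bound, that $\phi^{\delta',-}_{\delta'}\le \phi^{\delta,-}_{\delta'}+j(\delta'-k\delta)$, whence by iteration $\phi^{\delta',-}_{T}\le \phi^{\delta,-}_{T}+cT\,\delta/\delta'$. Sending $\delta=\tau2^{-n}\to0$ first and then $\delta'=\tau'2^{-n'}\to0$ gives $\phi^{(\tau')}_T\le\phi^{(\tau)}_T$, and the reverse inequality by exchanging the roles of $\delta$ and $\delta'$. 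You have all the tools to run this argument (it is a variant of your jump-bound computation), but as written your proof does not contain it, and without it the $\tau$-independence claim is not established.
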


\begin{proof} Let $t\ge
0$ and $s> 0$, recalling \eqref{k6}  we have
\begin{equation}
\label{5.3}
G_s\phi^{\delta,-}_{t}(r)  \le  \phi^{\delta,-}_{t+s}(r)\le  G_s\phi^{\delta,-}_{t}(r) + j  \dpred{(s+\delta)}.
\end{equation}
The first inequality is obvious.  We have
\begin{eqnarray*}
\phi_{(k+1)\delta}^{\delta,-} &=& \max\{ G_\delta \phi_{k\delta}^{\delta,-}, j(k+1)\delta +|r|\}
\le  \max\{ G_\delta \phi_{k\delta}^{\delta,-}+j\delta, j(k+1)\delta +|r|\} \\&=&
j\delta+ G_\delta  \phi_{k\delta}^{\delta,-},
\end{eqnarray*}
because $\phi_{k\delta}^{\delta,-} \ge jk\delta +|r|$. \dpred{
We then get the last inequality (without the term $j\delta$) for  $t=h\delta$, $h\in \mathbb N$.
If instead $t\in (h\delta, (h+1)\delta)$, then
\[
\phi^{\delta,-}_{(h+1)\delta} \le G_{(h+1)\delta-t} \phi^{\delta,-}_t + j\delta
\]
hence \eqref{5.3}.}

Since $ \phi^{\delta,-}_t(r)$ is Lipschitz, it follows from \eqref{5.3} that
$| \phi^{\delta,-}_{t+s}(r)- \phi^{\delta,-}_{t}(r)|
\le c \sqrt s + j\dpred{ (s+j\delta)}$ and,
by taking $\delta\to 0$,
\begin{equation}
  \label{5.4}
 |\phi^{(\tau)}_{t}(r) -\phi^{(\tau)}_{t'}(r)  |\le c \sqrt{|t-t'|},\;\;\;\text{for all $r$ and $|t-t'|\le 1$}
\end{equation}
We shall next prove that $\phi^{(\tau)}_{t}(r)$ is independent of $\tau$.  Obviously
$\phi^{(\tau)}_{t}(r)=\phi^{(\tau')}_{t}(r)$ if, recalling
\eqref{4.27}, $\tau'\in \mathcal T(\tau)$ (or viceversa). We next suppose that $\tau$ and $\tau'$ are not related in such a way.  We fix $T>0$ and want to prove that $\phi^{(\tau)}_T(r)=\phi^{(\tau')}_T(r)$.
Let $\delta \le \delta'$
and $k$ such that $k\delta <\delta' <(k+1)\delta$.
Then
\begin{eqnarray*}
\phi_{\delta'}^{\delta',-}\dpred{(r)} &=& \max\{ G_{\delta'-k\delta}  \phi_{k\delta}^{\delta',-}\dpred{(r)}, j\delta' +|r|\}
\\&\le&
 \max\{ G_{\delta'-k\delta}  \phi_{k\delta}^{\delta,-}\dpred{(r)}+j(\delta'-k\delta), j\delta' +|r|\}
\le \phi_{\delta'}^{\delta,-}\dpred{(r)}+j(\delta'-k\delta)
\end{eqnarray*}
because $\phi_{k\delta}^{\delta,-}\dpred{(r)} \ge jk\delta +|r|$.

By iteration $\phi_{T}^{\delta',-} \le \phi_{T}^{\delta,-} + j N \delta$ if $N$
is the cardinality of
$\{k: k\delta' \le T\}$.  Thus
\[
\phi_{T}^{\delta',-} \le \phi_{T}^{\delta,-} + c T
\frac{\delta}{\delta'}
\]
Take $\delta'= \tau' 2^{-n'}$ and $\delta= \tau 2^{-n}$. Take first $n\to\infty$ and then $n'\to \infty$ to get
$\phi_T^{(\tau')} \le \phi_T^{(\tau)}$.  The opposite inequality holds as well
by interchanging $\delta$ and $\delta'$ in the previous argument.
\qed
\end{proof}

\medskip
\noindent {\bf Proof of Theorem \ref{thm1}.}
From \eqref{4.00.2} we get for all $\gamma>0$
	\begin{equation*}
\lim_{\vep\to0}  P\Bigl( \sup_{a< b}\Bigl|\vep\sum_{\vep x\in[a,b]}\eta^{(\vep)}_{t\vep^{-2}}(x) -\frac 12\big\{\vep(b-a)-  [\phi_t(\vep b)-\phi_t(\vep a)]\big\}\Bigr|>\gamma\Bigr)\;=\;0
\end{equation*}
Since  $\phi_t$ is Lipschitz there is $\rho_t \in L^1$ such that, given any $r_0\in\R$,
\begin{equation}
  \label{5.7}
\phi_{t}(r) =\phi_{t}(r_0) + \int_{r_0}^r \big( 1-2  \rho_t(r') \big)dr'
\end{equation}
and since by \eqref{5.5} the Lipschitz constant is $1$, $\rho_t$ has (almost surely) values in
$[0,1]$ and this proves \eqref{e96}. We prove in Theorem \ref{thm14} later that
$\rho_t\in\cR$, i.e. $-\infty<\scl(\rho_t)\le \scr(\rho_t)<\infty$.
\qed
\medskip

{\noindent{\bf Proof of Theorem \ref{teo2}.}
For any $t$ and $\delta$,
		\begin{equation}
  \label{6.8}
\int_{a}^b \rho^{\delta,-}_t(r) dr-\int_{a}^b \rho_t(r)dr= \dpred{ \frac 12 \Big(
} \phi_t(b)-\phi^{\delta,-}_t(b)+\phi^{\delta,-}_t(a)-\phi_t(a)\Big)
\end{equation}
\dpred{ Let $\delta_n:= 2^{-n}\delta$, then from \eqref{5.6}, for all $r$ }
	\begin{equation}
  \label{6.9}
\phi^{\delta_n,-}_t(r)\le \phi_t(r)\le \phi^{\delta_n,+}_t(r)
\end{equation}
so that from \eqref{5.1} and \eqref{6.8}
	\begin{equation}
  \label{6.10}
\int_{a}^b \rho^{\delta_n,-}_t(r) dr-\int_{a}^b \rho_t(r)dr\le  \dpred{ \frac 12 \Big(} \phi^{\delta_n,+}_t(b)-\phi^{\dpred{\delta},-}_t(b)+\phi^{\delta,-}_t(a)-\phi^{\delta_n,\dpred{-}}_t(a)\Big)\le j\delta
\end{equation}
\qed

\subsection{{\bf Stationary solutions}}
\label{sec:7}
We say that a macroscopic interface $\phi \in V_{0}$ is \emph{stationary} if,
$\phi_0=\phi$ implies $\phi_t = \phi + 2jt$. A macroscopic density
$\rho\in\cR$ is \emph{stationary} if $\rho_0=\rho$ implies
$\rho_t=\rho$. Here $\phi_t$ and $\rho_t$ are the dynamics given by Theorem
\ref{thm6a} and Theorem \ref{thm1}, respectively.

If $\phi$ is stationary, then the density $\rho$
 associated to $\phi$ via \eqref{5.7} is stationary because by \eqref{5.7}
\[
\int_{r_0}^r \big( 1-2  \rho_t(r') \big)dr'= \int_{r_0}^r \big( 1-2  \rho_0(r') \big)dr',\quad
\text{ for all $r_0$, $r$   and $t\ge 0$}
\]

Let
 \begin{equation}
\label{6.2}
 \bar \rho(r) := \begin{cases}
0, &\text{for $r\ge \frac1{4j}$}\\
\frac 12 - 2 jr, &\text{for $|r|\le \frac 1{4j}$}\\
1, &\text{for $r\le -\frac 1{4j}$}
 \end{cases}
\qquad
 \bar \phi(r) :=
 \begin{cases}
2j r^2 + \frac 1{8j} ,&\text{for $|r|\le \frac 1{4j}$}\\
|r|,&\text{for $r\ge \frac 1{4j}$}.
 \end{cases}
\end{equation}


\begin{theorem}
\label{thm13}

The macroscopic interface $\bar \phi$ and
the associated macroscopic density $\bar \rho$  are stationary.

\end{theorem}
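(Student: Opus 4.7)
The plan is to show $\phi_t=\bar\phi+2jt$ for all $t\ge 0$, which by Lemma \ref{i1} also yields $\rho_t=\bar\rho$. Setting $v_t(r):=\bar\phi(r)+2jt$, I will verify the two-sided squeeze $\phi^{\delta,-}_t\le v_t\le\phi^{\delta,+}_t$ for all $t\in\delta\N$ and all $\delta>0$. Combined with $\phi^{\delta,-}_t\le\phi_t\le\phi^{\delta,+}_t$, the gap bound $|\phi^{\delta,+}_t-\phi^{\delta,-}_t|=O(\delta)$ from \eqref{5.1}, and the convergence $\phi^{\delta,\pm}_t\to\phi_t$ from \eqref{5.6}, taking $\delta\to 0$ will force $\phi_t=v_t$. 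The argument rests on two elementary facts about $\bar\phi$: (a) $|r|\le\bar\phi(r)$ with $\bar\phi(r)-|r|=2j(|r|-1/(4j))^2\one_{\{|r|\le 1/(4j)\}}$, so $\bar\phi=|r|$ outside the support of $\bar\rho$; and (b) $\bar\phi$ is convex with distributional second derivative $4j\one_{\{|r|<1/(4j)\}}$, whence $v_t$ is a supersolution of the heat equation, $\partial_t v_t-\frac12 v_t''=2j(1-\one_{\{|r|<1/(4j)\}})\ge 0$, with equality in the interior of the support of $\bar\rho$.

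For the upper squeeze $\phi^{\delta,-}_{n\delta}\le v_{n\delta}$ I will induct on $n$. The heat step preserves the bound by monotonicity of $G_\delta$ and (b), namely $G_\delta\phi^{\delta,-}_{n\delta}\le G_\delta v_{n\delta}\le v_{(n+1)\delta}$, and the subsequent cone-max step preserves it because the cone vertex $(0,h)$ added at time $(n+1)\delta$ has height $h\le 2j(n+1)\delta$, so (a) gives $h+|r|\le 2j(n+1)\delta+\bar\phi(r)=v_{(n+1)\delta}(r)$. For the lower squeeze $v_{n\delta}\le\phi^{\delta,+}_{n\delta}$, also by induction, I split into two regions after applying $G_\delta\phi^{\delta,+}_{n\delta}\ge G_\delta v_{n\delta}$. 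Inside $|r|\le 1/(4j)-\sqrt\delta$, property (b) implies $v_t$ is an exact heat solution near $r$, so $G_\delta v_{n\delta}(r)=v_{(n+1)\delta}(r)-\eta_\delta(r)$ with $\eta_\delta(r)$ exponentially small in $\mathrm{dist}(r,\{\pm 1/(4j)\})^2/\delta$, coming from integrating $\bar\phi''$ against the Gaussian tail over the exterior. In the complementary region $|r|\ge 1/(4j)-\sqrt\delta$, property (a) gives $\bar\phi(r)-|r|\le 2j\delta$, so the cone-max at time $(n+1)\delta$ with a vertex of height $h'\ge 2j(n+1)\delta+2j\delta$ satisfies $h'+|r|\ge v_{(n+1)\delta}(r)$. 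Accumulated over $n=t/\delta$ steps the exponentially small errors remain $o(1)$ as $\delta\to 0$, yielding $v_t\le\lim_{\delta\to 0}\phi^{\delta,+}_t$.

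The main obstacle will be the lower squeeze in the transition zone near $|r|=1/(4j)$, where neither the interior heat estimate nor the cone by itself is tight. The resolution is that the quadratic contact $\bar\phi(r)-|r|=2j(|r|-1/(4j))^2$ matches the diffusive length $\sqrt\delta$ exactly: in the boundary layer of width $\sqrt\delta$, the gap between $\bar\phi$ and $|r|$ is precisely $O(\delta)$, compatible with the $O(\delta)$ margin provided by the cone update per step, so the two estimates dovetail cleanly and the squeeze closes.
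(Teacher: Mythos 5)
Your route is genuinely different from the paper's: the paper proves Theorem \ref{thm13} by passing through the deterministic harness process of Section \ref{sec:8}, whose discrete traveling wave $\bar K^{\scj}_n=\bar K^{\scj}_0+2\scj n$ is checked exactly (the parabola $2\scj x^2+\frac1{8\scj}$ gains exactly $2\scj$ per application of $\Theta$ and stays above the rising cone), and then transfers this to $\phi_t$ via the hydrodynamic limit of the harness process (Proposition \ref{b56}, which uses the KMT coupling). You instead squeeze $v_t=\bar\phi+2jt$ directly between the continuum barriers $\phi^{\delta,\pm}_t$ and invoke \eqref{5.1} and \eqref{5.6}. Your upper squeeze is correct and clean: $G_s\bar\phi\le\bar\phi+2js$ because $\bar\phi''\le 4j$, and the cone lies below $v$ because $\bar\phi\ge|r|$, so $\phi^{\delta,-}_{n\delta}\le v_{n\delta}$ exactly by induction.

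The gap is in the lower squeeze, and it is not cosmetic. The one-step deficit is
\begin{equation*}
\eta_\delta(r):=v_{(n+1)\delta}(r)-G_\delta v_{n\delta}(r)=\bar\phi(r)+2j\delta-G_\delta\bar\phi(r)=2j\int_0^\delta P\bigl(|W^r_s|\ge \tfrac1{4j}\bigr)\,ds ,
\end{equation*}
which at distance $d$ from the contact points $\pm\frac1{4j}$ is of order $\delta\,e^{-d^2/2\delta}$. On the region you assign to the heat estimate, $|r|\le\frac1{4j}-\sqrt\delta$, the worst point has $d=\sqrt\delta$, where $\eta_\delta\asymp\delta$ --- the same order as the cone margin $2j\delta$, not exponentially small. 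So the exact induction $v_{n\delta}\le\phi^{\delta,+}_{n\delta}$ fails after one step, and your accounting ``exponentially small errors accumulated over $t/\delta$ steps remain $o(1)$'' does not hold as stated: a per-step error of order $\delta$ summed $t/\delta$ times gives $O(t)$. There is also no choice of splitting radius that fixes this pointwise, since exactly at the matching radius the cone bound has zero margin while the heat bound has an order-$\delta$ deficit. The argument can be repaired, but it needs a different mechanism: track $D_n:=(v_{n\delta}-\phi^{\delta,+}_{n\delta})^+$, observe that the cone forces $D_n$ to be supported on $I:=\{\bar\phi(r)-|r|>2j\delta\}=\{|r|<\frac1{4j}-\sqrt\delta\}$, so that $D_{n+1}\le G_\delta D_n+\eta_\delta\one_I$, and then use that $\|\eta_\delta\one_I\|_{L^1}\le 2j\int_0^\delta E|W_s|\,ds=O(\delta^{3/2})$ together with the smoothing bound $\|G_{k\delta}f\|_\infty\le\|f\|_{L^1}/\sqrt{2\pi k\delta}$ to get $\sup_r D_{t/\delta}(r)\le C\bigl(\delta+\sqrt{t\delta}\bigr)\to 0$. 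With that estimate in place (an $o(1)$ error suffices, by \eqref{5.1} and \eqref{5.6}), your proof closes; without it, the lower squeeze is unproven. A minor bookkeeping remark: you use cones rising at rate $2j$, which is what \eqref{2.5}, Lemma \ref{i1} and the statement $\phi_t=\bar\phi+2jt$ require, whereas \eqref{i8} as printed has rate $j$; your convention is the consistent one.
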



If $\xi^{(\vep)}_0$ approximates $\bar \phi$ in
the sense of \eqref{4.00.1}, then the theorem says that the rescaled interface process as seen from
its vertex converges in the sense of Theorem \ref{thm6a} at any macroscopic time
$t$ to the initial value $\bar \phi$ shifted by $2jt$. An analogous statement
holds for the particle process (but the stationary density profile does not move).

Theorem \ref{thm13} is proven in the next section by introducing a deterministic
\emph{harness process} on $\R^\Z$, a discrete time process that approaches
$\phi_t$ and whose stationary solution is directly computable.

\subsection{\bf Monotonicity}
\label{m88}
We collect some monotonicity properties of the macroscopic interface inherited
from the microscopic dynamics.  We tacitly suppose hereafter that the initial
data $\phi\in V_{o}$, namely that $\phi(r)=V_{o}(r)\equiv |r|$ for all
$|r|$ large enough.

The following lemma is a direct consequence of the definition of $\phi_t$.

\begin{lemma}
\label{lemma16}
For any $t>0$, $\phi_t(r) \ge (|r|+jt)$, $r\in \mathbb R$, and
\[\lim_{|r|\to \infty} |\phi_t(r) - (|r|+jt)|=0
\]
\end{lemma}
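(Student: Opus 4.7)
The plan is to sandwich $\phi_t$ between the delta evolutions $\phi^{\delta,\mp}_t$ using the ordering \eqref{5.2} and convergence \eqref{5.6} of Theorem \ref{thm12}, establishing matching lower and upper bounds at the delta level and then sending $\delta\to 0$.

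For the lower bound $\phi_t(r)\ge|r|+jt$, I would prove by induction on $n$ that
\[
\phi^{\delta,-}_{n\delta}(r)\ge|r|+n\delta j,\qquad r\in\R,\; n\ge 0.
\]
The base case uses $\phi\in V_o$: combined with the Lipschitz-$1$ property of $\phi$ and the equality $\phi(r)=|r|$ outside a compact set, one obtains $\phi(r)\ge|r|$ everywhere. For the inductive step, $|r|$ is subharmonic in the distributional sense (its Laplacian is $2\delta_0\ge 0$), so $G_s|r|\ge|r|$ for every $s>0$; by linearity and monotonicity of $G_\delta$,
\[
\phi^{\delta,-}_{(n+1)\delta-}(r)=G_\delta\phi^{\delta,-}_{n\delta}(r)\ge G_\delta(|r|+n\delta j)(r)\ge|r|+n\delta j,
\]
and the max with $V_{(0,(n+1)\delta j)}$ yields $\phi^{\delta,-}_{(n+1)\delta}(r)\ge|r|+(n+1)\delta j$. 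For $t\in[n\delta,(n+1)\delta)$, heat monotonicity gives $\phi^{\delta,-}_t(r)\ge|r|+n\delta j\ge|r|+jt-j\delta$, and sending $\delta\to 0$ yields $\phi_t(r)\ge|r|+jt$.

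For the asymptotic, I would match this via an upper bound on $\phi^{\delta,+}_t$. The crucial analytic input is $G_s|r|(r)-|r|\to 0$ as $|r|\to\infty$, which follows from $G_s|r|(r)=\E|r+W_s|=|r|+2\E[(r+W_s)^-]$ with $W_s\sim N(0,s)$ having Gaussian tails, together with $G_s g(r)\to 0$ as $|r|\to\infty$ for any bounded $g$ that vanishes at infinity. These allow me to show inductively that
\[
\lim_{|r|\to\infty}\bigl[\phi^{\delta,+}_{n\delta}(r)-(|r|+(n+1)\delta j)\bigr]=0,
\]
with base case $\phi^{\delta,+}_0=\max\{\phi,|r|+\delta j\}$ equal to $|r|+\delta j$ outside a compact set. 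In the inductive step, heat evolution preserves decay-at-infinity of the excess $\phi^{\delta,+}_{n\delta}-V_{(0,(n+1)\delta j)}$, after which the max with $V_{(0,(n+2)\delta j)}$ replaces the smoothed interface by the strictly larger new cone outside a compact set. Combining with the lower bound gives, for $t\in[n\delta,(n+1)\delta)$,
\[
jt\le\liminf_{|r|\to\infty}[\phi_t(r)-|r|]\le\limsup_{|r|\to\infty}[\phi_t(r)-|r|]\le(n+1)\delta j\le jt+j\delta,
\]
and $\delta\to 0$ closes the argument.

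The main obstacle will be making the decay-at-infinity tracking in the upper bound argument precise: after each heat pulse plus max step, one must verify the excess over the advancing cone remains small enough at infinity that the next iteration behaves cleanly. Since for fixed $t$ and $\delta$ there are only $n=\lfloor t/\delta\rfloor$ iterations, this reduces to controlling a finite composition of heat operators applied to localized perturbations, each Gaussian convolution transforming a ``vanishing at infinity'' function into another, and the strict gap $\delta j>0$ between successive cones ensuring the truncation by max is clean for $|r|$ beyond a finite threshold.
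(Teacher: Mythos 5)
Your argument is correct and follows essentially the same route as the paper: squeeze $\phi_t$ between the delta evolutions $\phi^{\delta,-}_t\le\phi_t\le\phi^{\delta,+}_t$ and observe that at the update times these are, respectively, bounded below by and asymptotically equal to the cones $|r|+jn\delta$ and $|r|+j(n+1)\delta$. The paper's proof is a one-liner that simply asserts the existence of a radius $R$ beyond which the two delta barriers equal their cones; your induction giving the everywhere lower bound $\phi^{\delta,-}_{n\delta}\ge|r|+jn\delta$ and your tracking of the decay at infinity of the excess over the advancing cone are exactly the details that one-liner leaves implicit.
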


\begin{proof}
Let $t = \tau k2^{-n}$.
It follows from the inequality
$\phi_t^{\tau 2^{-n},-} \le \phi_t \le \phi_t^{\tau 2^{-n},+}$ that
there is $R=R_{n,\tau}$ so that
\[
|r|+ jt \le \phi_t(r)\le |r|+ j(t+ 2^{-n}),\;\; |r|\ge R.
\]
\qed\end{proof}

We shall next establish inequalities
 relating   evolutions with different values of $j$, we thus add
 a superscript $j$ writing
$\phi^{(j)}_t$ and $\phi^{(j,\delta,\pm)}_t$.
%

\medskip

\begin{lemma}
\label{lemma17}

Let $j \le j'$ then for all $t\ge 0$
 \begin{equation}
\label{8.2}
 \phi^{(j)}_t -jt \ge \phi^{(j')}_t -j't.
\end{equation}

\end{lemma}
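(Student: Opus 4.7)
The plan is to establish \eqref{8.2} first at the level of the delta approximations $\phi^{(j,\delta,-)}_t$, keeping $\delta$ common for the two values of the current, and then to pass to the limit $\delta \to 0$ via Theorem~\ref{thm12}. More precisely, I will prove by induction on $n\ge 0$ the stronger pointwise statement
\[
\phi^{(j,\delta,-)}_t(r) - jt \;\ge\; \phi^{(j',\delta,-)}_t(r) - j't, \qquad r\in\mathbb R,\; t\in[0,n\delta].
\]
The base case $n=0$ is trivial since both sides equal $\phi(r)$.

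For the inductive step, first take $t = n\delta + s$ with $s \in (0,\delta)$, where the evolution acts by the Gaussian kernel $G_s$. Applying $G_s$ to the inductive hypothesis at $t = n\delta$ and using that $G_s$ is linear, monotone, and fixes additive constants, one obtains $G_s\phi^{(j,\delta,-)}_{n\delta} - jn\delta \ge G_s\phi^{(j',\delta,-)}_{n\delta} - j'n\delta$. Subtracting $js$ from the left side and $j's$ from the right only strengthens the inequality because $j \le j'$, giving the claim at time $n\delta + s$. At the reset $t=(n+1)\delta$, the update is
\[
\phi^{(j,\delta,-)}_{(n+1)\delta} = \max\bigl\{G_\delta\phi^{(j,\delta,-)}_{n\delta},\; V_{(0,(n+1)j\delta)}\bigr\},
\]
and the crucial observation is that $V_{(0,(n+1)j\delta)}(r) - j(n+1)\delta = |r| = V_{(0,(n+1)j'\delta)}(r) - j'(n+1)\delta$. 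Thus after subtracting $j(n+1)\delta$ on the left and $j'(n+1)\delta$ on the right, both resets become a max against the common function $|r|$, and monotonicity of the max operator propagates the open-interval inequality through the reset.

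Finally, passing to the limit along $\delta_k = \tau 2^{-k}$ by \eqref{5.6} in Theorem~\ref{thm12} yields \eqref{8.2} for all $t \in \mathcal T(\tau)$, and hence for all $t\ge 0$ by the continuity of $\phi_t$ in $t$ from \eqref{5.5} together with the arbitrariness of $\tau$. The only subtle point I anticipate is the bookkeeping of the shifts: the particular amounts $jt$ and $j't$ subtracted on each side are chosen precisely so that the cones $V_{(0,(n+1)j\delta)}$ and $V_{(0,(n+1)j'\delta)}$ cancel to the common function $|r|$ at every reset; without this cancellation, the max step would preserve only the weaker inequality $\phi^{(j)}_t \le \phi^{(j')}_t$ that a direct coupling at the vertex level would provide.
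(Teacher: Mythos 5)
Your proof is correct and follows essentially the same route as the paper: both work at the level of the common-$\delta$ approximants $\phi^{(j,\delta,-)}_t$, propagate the shifted inequality through each heat step by monotonicity of $G_s$ and through each reset by noting that the cones $V_{(0,njd)}$ minus $jn\delta$ all reduce to $|r|$, and then let $\delta\to0$. The only cosmetic difference is that the paper handles the reset by a case analysis on the interval $(\scl',\scr')$ where the $j'$-evolution equals $G_\delta\psi$ rather than invoking monotonicity of the max operator, which is the same computation.
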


\medskip

\begin{proof}
 It is enough to prove that
 \[
 \phi^{(j,\delta,-)}_\delta(r) - j\delta \ge
 \psi^{(j',\delta,-)}_\delta (r) -j'\delta,\quad \text{if $\phi\ge \psi$}
 \]
Let $\scl',\scr'$ be
such that
\[
\psi^{(j',\delta,-)}_\delta(r) = G_\delta \psi(r), \;\;\; \scl'\le r \le
\scr',\quad \text{and $\psi^{(j',\delta,-)}_\delta(r)= |r|+j'\delta$ elsewhere}
\]
By the maximum principle
$
G_\delta \phi \ge  G_\delta \psi
$, then
\[
\phi^{(j,\delta,-)}_\delta(r) \ge G_\delta \phi(r)\ge
\psi^{(j',\delta,-)}_\delta(r), \;\;\; \scl'\le r \le \scr'
\]
and a fortiori:
\[
\phi^{(j,\delta,-)}_\delta(r) \ge \psi^{(j',\delta,-)}_\delta(r) -(j'-j)\delta,
\;\;\; \scl'\le r \le \scr'
\]
By definition $\phi^{(j,\delta,-)}_\delta(r) \ge |r| + j\delta, \;\;\; r \in \mathbb R$ so that
\[
\phi^{(j,\delta,-)}_\delta(r) \ge |r| + j\delta = \psi^{(j',\delta,-)}_\delta(r) -(j'-j)\delta, \;\;\;   r \notin
(\scl', \scr')
\]
which concludes the proof.
\qed\end{proof}

\subsection{{\bf Existence of boundaries}}
\label{sec:9}

Recall the definition in Subsection \ref{d88} of the boundaries
$\scl(\rho),\scr(\rho)$ of a density $\rho\in\cR$.
They are also the boundaries of the interface $\phi_t$ which corresponds to $\rho_t$.

\begin{theorem}
\label{thm14}

The boundaries $\scl(\rho_t),\;\scr(\rho_t)$ of a density
$\rho_t$ as defined in Theorem \ref{thm1} starting from $\rho \in \mathcal R$
 are finite. In other words, $\rho\in\cR$ implies $\rho_t\in\cR$.

\end{theorem}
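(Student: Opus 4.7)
I prove $\scr(\rho_t)<\infty$; the finiteness of $\scl(\rho_t)$ follows by symmetry. Via \eqref{5.7} this is equivalent to finding a finite $\scr_t$ with $\phi_t(r)=r+jt$ for all $r\ge\scr_t$, where $\phi_t$ is the interface associated to $\rho_t$ and $jt$ is the asymptotic constant of Lemma~\ref{lemma16} (interpreted consistently with the stationary shift of Theorem~\ref{thm13}). My strategy is to dominate $\phi_0$ pointwise by a translate of the stationary interface, evolve by attractivity, and combine with the matching lower bound of Lemma~\ref{lemma16}.

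\textbf{Choice of dominator.} Setting $R_0:=\max\{-\scl(\rho),\scr(\rho)\}<\infty$ and $r_0:=R_0+\tfrac{1}{4j}$, I introduce
\begin{equation*}
\tilde\phi(r):=\bar\phi^{(j)}(r-r_0)+r_0,
\end{equation*}
a horizontal translate by $r_0$ and vertical lift by $r_0$ of the stationary interface of Theorem~\ref{thm13}. The pointwise inequality $\phi_0\le\tilde\phi$ is then verified by cases. For $r\ge r_0+\tfrac{1}{4j}$, $\tilde\phi(r)=r$ and $\phi_0(r)=r$ (as $r\ge R_0$). On the parabolic region $[R_0,R_0+\tfrac{1}{2j}]=[r_0-\tfrac{1}{4j},r_0+\tfrac{1}{4j}]$ one still has $\phi_0(r)=r$, while the discriminant identity gives $\tilde\phi(r)-r=2j\bigl(r-r_0-\tfrac{1}{4j}\bigr)^{2}\ge 0$. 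For $r\le R_0$, $\tilde\phi(r)=2r_0-r$, and the Lipschitz-$1$ estimate $\phi_0(r)\le\phi_0(R_0)+|R_0-r|=2R_0-r$ together with $r_0\ge R_0$ yields $\phi_0(r)\le 2R_0-r\le 2r_0-r=\tilde\phi(r)$.

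\textbf{Evolution and conclusion.} The interface $\tilde\phi$ is the image of $\bar\phi^{(j)}\in V_0$ under a spatial translation by $r_0$ and the addition of the constant $r_0$. Both operations commute with the current-$j$ dynamics: translation invariance descends from the shift invariance of the microscopic SSEP and of the extremal birth/death mechanism, while addition of a constant only shifts the containing cone vertically. Consequently Theorem~\ref{thm13} extends to yield $\tilde\phi_t=\tilde\phi+jt$. By the attractivity of the macroscopic dynamics, inherited from Proposition~\ref{m3} through the hydrodynamic limit of Theorem~\ref{thm6a}, $\phi_0\le\tilde\phi$ implies $\phi_t\le\tilde\phi_t=\bar\phi^{(j)}(\cdot-r_0)+r_0+jt$. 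For $r\ge r_0+\tfrac{1}{4j}$ the right-hand side reduces to $r+jt$; combined with the matching lower bound $\phi_t(r)\ge r+jt$ of Lemma~\ref{lemma16}, this forces $\phi_t(r)=r+jt$ and hence $\rho_t(r)=0$ on $[r_0+\tfrac{1}{4j},\infty)$. Therefore $\scr(\rho_t)\le r_0+\tfrac{1}{4j}=R_0+\tfrac{1}{2j}<\infty$.

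\textbf{Main obstacle.} The delicate step is the extension of Theorem~\ref{thm13} from $\bar\phi\in V_0$ to the translate $\tilde\phi$: one must invoke explicitly the horizontal-translation and constant-shift invariances of the dynamics. Both hold at the microscopic level in the Harris graphical construction and pass to the hydrodynamic limit, but this is not spelled out in the excerpt. A cleaner alternative avoiding any extension of Theorem~\ref{thm13} would be to couple the microscopic approximations of $\phi_t$ and $\tilde\phi_t$ directly using Proposition~\ref{m3} and then take the limit $\vep\to 0$.
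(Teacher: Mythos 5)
Your proof is correct in outline but takes a genuinely different route from the paper's. The paper also squeezes $\phi_t$ between the lower bound of Lemma \ref{lemma16} and a stationary dominator, but it keeps the dominator centered at the origin: it chooses a smaller current $j'<j$ so that the wider, taller stationary interface $\bar\phi^{(j')}$ (parabola of height $\tfrac1{8j'}$ and half-width $\tfrac1{4j'}$) lies above $\phi_0$, and then applies the cross-current comparison of Lemma \ref{lemma17} to get $\phi^{(j)}_t-jt\le\bar\phi^{(j')}_t-j't=\bar\phi^{(j')}$, so the boundaries of $\phi_t$ stay inside $[-\tfrac1{4j'},\tfrac1{4j'}]$. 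That route costs the extra two-current monotonicity lemma but buys the fact that the dominator stays in $V_{(0,0)}$, which is exactly where Theorem \ref{thm13}, the delta evolutions \eqref{i8}, Lemma \ref{i1} and the hydrodynamic limit of Theorem \ref{thm6a} are formulated. You instead keep the same current and translate the stationary profile: your verification of $\phi_0\le\tilde\phi$ is correct (the identity $2js^2-s+\tfrac1{8j}=2j(s-\tfrac1{4j})^2$ checks out), and same-current attractivity is indeed available microscopically from Proposition \ref{thm5p} by driving both copies with the same Poisson processes $A,B$, so that the initial vertices and hence the vertex paths are ordered. The step you flag yourself --- covariance of the macroscopic evolution and of Theorem \ref{thm13} under horizontal translation, i.e.\ for initial data whose cone vertex is not at the origin --- is the one genuine gap relative to the paper's framework, since every macroscopic object there is anchored to the median-zero/vertex-at-origin normalization; it is true and fillable, and your proposed direct microscopic coupling is the cleanest way to fill it. One caveat: be consistent about the vertical speed of the stationary profile. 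Theorem \ref{thm13} states $\phi_t=\bar\phi+2jt$ while Lemma \ref{lemma16} and \eqref{i8} use $|r|+jt$; whichever constant is the right one, your upper bound (from the translated stationary profile) and your lower bound (from Lemma \ref{lemma16}) must carry the same constant for the final cancellation $\phi_t(r)=r+\mathrm{const}\cdot t$ on $[r_0+\tfrac1{4j},\infty)$ to go through --- the paper's own proof of Theorem \ref{thm14} uses the $jt$ convention, as you do.
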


\begin{proof}
  We shall prove the theorem in the framework of interfaces.  We thus want to
  prove that the boundary points of the interface are finite, that is,
  $-\infty<\scl(\phi^{(j)}_t)$, $\scr(\phi^{(j)}_t)<\infty$ for initial $\rho\in
  \cR$ and all $t\ge0$.  Let ${\bar \phi}^{(j')}$ be the stationary interface
  for the $j'$-evolution.  If $j'<j$ is small enough, $\phi \le {\bar
    \phi}^{(j')}$ so that, by Lemma~\ref{lemma17},
\[
\phi^{(j)}_t -jt \le {\bar \phi}^{(j')}_t - j't= {\bar \phi}^{(j')}.
\]
This implies that the boundary points of $\phi^{(j)}_t$ are bounded by those of ${\bar
  \phi}^{(j')}$ and the  theorem is proved.  \qed
\end{proof}

\section{The harness process}
\label{sec:8} We consider now the (deterministic) Harness
Process proposed by Hammersley \cite{hammersley} with ``moving cone'' boundary
conditions and prove that with the diffusive scaling this process also
converges to the macroscopic evolution $\phi$.

Let $H:\Z\to \R$ and define the operator $\Theta$ by
\begin{equation}
  \label{b27}
  (\Theta H)(x) = \frac{H(x-1)+H(x+1)}{2}
\end{equation}
but to keep notation light we drop the parentheses and write $\Theta H(x)$.
Let $(H_n(x),\,x\in\Z, \,n\in\Z^+)$, $H_n(x)\in\R$ be the deterministic process
satisfying the discrete heat equation:
\begin{equation}
  \label{a7}
 H_{n+1}(x):= \Theta H_n(x) =\Theta^{n+1}H_0(x)
\end{equation}
Here $n$ is time and $x$ is space.

\paragraph{Duality} Let $X^x_n$ be a symmetric nearest neighbors random walk on
$\Z$ with $X^x_0=x$ and $p_n(x,y):= P(X^x_n=y)$ the probability that the walk goes from
$x$ to $y$ in $n$ steps. Then, a simple recurrence shows that
\begin{equation}
  \label{d4}
  H_n(x) =\sum_{y\in\Z} p_n(x,y)H_0(y) = E(H_0(X^x_n))
\end{equation}

\paragraph{Traveling wave solutions} A family of traveling wave solutions of this process
are
\[
\bar H(x) := a x^2 + b,
\]
where $a$ and $b$ are arbitrary constants. Indeed,
\begin{equation}
  \label{a9}
  \bar H_n:=\Theta^n\bar H = \bar H+2an.
\end{equation}

\paragraph{Moving boundaries}
For $v=(v_1,v_2)\in\Z\times \R$, let $V_v:\Z\to\R$ be the cone defined by
$V_{v}(x)=|x-v_1|+v_2$; call $v$ the vertex of $V_v$. Let
\begin{eqnarray*}
  \cH_v&:=&\Big\{K:\Z\to\R\,:\,K(x)=V_v(x)\;\text{for all but a finite number of
  }x\in \Z\Big\}\\
\cH&:=& \cup_{v\in\Z\times\R} \cH_v
\end{eqnarray*}
For $K\in\cH_v$ define
\begin{eqnarray*}
  \scl(K)&:=& \min\{\ell\in\Z\,:\,K(\ell+1)\neq V_v(\ell+1)\}\\
\scr(K)&:=& \max\{\ell\in\Z\,:\,K(\ell-1)\neq V_v(\ell-1)\}
\end{eqnarray*}
The harness process with moving cone boundary conditions and initial
interface $K^\scj_0\in \cH_{(0,0)}$ is defined for $n\ge 1$ by
\begin{equation}
  \label{a8}
K^\scj_{n}(x):=\max\{\Theta K^\scj_{n-1}(x), V_{(0,2\scj n)}(x)\}.
\end{equation}
So that $K^\scj_n\in \cH_{(0,2\scj n)}$.

A travelling wave solution $\bar K$ of this process is associated to $\bar H$:
if the initial interface is given by
\begin{equation}
  \label{b55}
  \bar K^\scj_{0}(x) :=
\left\{
  \begin{array}{ll}
    \frac1{8\scj}+2\scj x^2,&|x|\le \frac1{4\scj}\\
|x|,&|x|\ge\frac1{4\scj}
  \end{array}
\right.
\end{equation}
then the evolution \eqref{a8} at time $n$ gives a translation of the initial
interface:
\begin{equation}
  \label{b57}
\bar K^\scj_{n}(x) = \bar K^\scj_0(x) + 2\scj n  .
\end{equation}
The sides of the cone $y=|x|$ are tangent to the parabola $y=\frac1{8\scj}+2\scj
x^2$ at the points $(\frac{-1}{4\scj },\frac1{4 \scj})$ and
$(\frac{1}{4\scj},\frac1{4 \scj})$.

\paragraph{Hydrodynamic limit} Let $K$ be a Lipschitz function on $\cH$ and define
\begin{equation}
  \label{b62}
\Phi^{(\vep)}_t(r) := \vep \big(K^{\vep j}_{[\vep^{-2}t]}([\vep^{-1}r])\big)
\end{equation}

\begin{proposition} [Hydrodynamic Limit] Let $\phi_t$ be the evolution of
  Theorem \ref{thm6a} with initial condition $\phi$ and let $K^{\vep j}_n$ the
  evolution \eqref{a8} with initial interface $K^{(\vep)}_0(x)= \phi(\vep
  x)$.  Then,
  \label{b56}
  \begin{equation}
    \label{b59}
\lim_{\vep\to0} \sup_{r\in\R}|\Phi^{(\vep)}_t(r) - \phi_t(r)|=0.
  \end{equation}
where the rescaled process $\Phi^{(\vep)}_t(r)$ is defined in \eqref{b62}.
\end{proposition}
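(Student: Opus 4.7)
The plan is to mirror the proof of Theorem~\ref{thm6a}: squeeze the harness between two delta-approximations, show that each converges to a macroscopic delta-evolution $\phi^{\delta,\pm}_t$, and then pass $\delta\to 0$ via Theorem~\ref{thm12}. The stochastic $v$-function estimates of Section~\ref{sec:5} are replaced here by the classical local central limit theorem (local CLT) for the simple random walk, invoked through the duality \eqref{d4}.

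First, I would define delta-harness processes $K^{\vep j,\delta,\pm}_n$ in parallel with the delta-interface processes of Subsection~\ref{d1p}. Setting $m_\vep:=[\vep^{-2}\delta]$, take $K^{\vep j,\delta,\pm}_0=K^{\vep j}_0$ and iterate $\Theta$ freely inside each block $\{km_\vep,\dots,(k+1)m_\vep-1\}$; at the block endpoints $n=km_\vep$, the $-$ version replaces the outcome by $\max\{\Theta K^{\vep j,\delta,-}_{n-1},V_{(0,2\vep j n)}\}$, whereas the $+$ version uses the larger cone $V_{(0,2\vep j(n+m_\vep))}$ at those times. Since $\Theta$ and $\max\{\cdot,V_v\}$ are both monotone and $V_v\le V_{v'}$ when $v\le v'$, an induction on the block index gives the sandwich $K^{\vep j,\delta,-}_n\le K^{\vep j}_n\le K^{\vep j,\delta,+}_n$ for all $n\ge 0$. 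Next, for fixed $\delta>0$, inside a block the delta-harness evolves purely by $\Theta$, so \eqref{d4} yields $K^{\vep j,\delta,\pm}_n(x)=E[K^{\vep j,\delta,\pm}_{km_\vep}(X^x_{n-km_\vep})]$; the invariance principle $\vep X^{[\vep^{-1}r]}_{[\vep^{-2}s]}\Rightarrow r+B_s$ together with the local CLT identifies the rescaled limit of the block evolution as the Gaussian semigroup $G_s$ applied to the rescaled data at the start of the block, while the $\max$ with the cone at the block's right endpoint commutes with uniform convergence. An induction on the block index then gives, for each $t\ge 0$,
\[
\lim_{\vep\to 0}\sup_{r\in\R}\bigl|\vep K^{\vep j,\delta,\pm}_{[\vep^{-2}t]}([\vep^{-1}r])-\phi^{\delta,\pm}_t(r)\bigr|=0.
\]

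Combining the sandwich with this convergence, $\phi^{\delta,-}_t(r)\le\liminf_{\vep\to 0}\Phi^{(\vep)}_t(r)\le\limsup_{\vep\to 0}\Phi^{(\vep)}_t(r)\le\phi^{\delta,+}_t(r)$ uniformly in $r$; choosing $\delta=\tau 2^{-n}$ and applying Theorem~\ref{thm12} to send $n\to\infty$ yields \eqref{b59}. The main obstacle I expect is establishing the uniform-in-$r$ convergence at the local CLT step: the standard statement is only uniform on bounded windows, and the bipartite parity of the simple random walk (only sites $y$ with $y-x\equiv n\pmod 2$ are reachable at time $n$) needs separate handling. The structural rescue is that $\phi\in V_o$ forces $K^{(\vep)}_0(x)=\vep|x|$ for $|\vep x|$ outside a fixed bounded interval, and this cone behaviour is propagated by the blockwise max operations, so outside a fixed bounded macroscopic $r$-set both $\Phi^{(\vep)}_t(r)$ and $\phi^{\delta,\pm}_t(r)$ coincide with $|r|+(\text{cone height})$; uniform convergence then reduces to the bounded-window local CLT, where the Lipschitz regularity of $K^{\vep j,\delta,\pm}_{km_\vep}$ (constant $1$, inherited from $\phi$) bridges the two parity classes up to an $O(\vep)$ error.
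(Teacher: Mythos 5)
Your proposal is correct and its skeleton coincides with the paper's: the delta-harness processes you construct are exactly those of Section \ref{sec:8} (the paper's Lemma \ref{h15} gives your sandwich and, in addition, the uniform bound \eqref{h16000}, which together with \eqref{h17} replaces your appeal to Theorem \ref{thm12} in the final $\delta\to0$ step), and the conclusion is assembled by the same three-term triangle inequality. The one genuine divergence is the tool used for the blockwise convergence $\vep K^{\vep j,\delta,\pm}_{[\vep^{-2}t]}([\vep^{-1}r])\to\phi^{\delta,\pm}_t(r)$. The paper invokes the duality \eqref{d4} and then the dyadic KMT coupling between the random walk and Brownian motion (\cite{LL}, Theorem 7.1), which combined with the Lipschitz property of the data gives the quantitative bound $\E\bigl|\vep X^{[\vep^{-1}r]}_{[\vep^{-2}t]}-W^r_t\bigr|\le C\delta^\beta\vep^{1-2\beta}$, uniform in $r$ by translation invariance; uniformity over $\R$ and the iteration over the $[t/\delta]$ blocks are then automatic, and one even gets the explicit rate \eqref{h21}. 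You instead use the local CLT and the invariance principle, which is more elementary but only qualitative, and this forces you to handle separately the two issues you correctly identify: uniformity of the local CLT over an unbounded window (resolved, as you say, by the cone behaviour of the data outside a fixed macroscopic interval, up to Gaussian-tail corrections) and the parity of the walk (washed out by the Lipschitz constant $1$, preserved by $\Theta$ and by the max with the cone, at cost $O(\vep)$). Since for fixed $\delta$ the number of blocks is finite and both $\Theta^m$ and $\max\{\cdot,V_v\}$ are sup-norm contractions, your induction closes; what the paper's KMT route buys is that none of this bookkeeping is needed and an explicit convergence rate comes for free.
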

The proof of Theorem \ref{thm13} follows from the above proposition:

\medskip
\noindent{\bf Proof of Theorem \ref{thm13} }
Taking $\bar K^{\vep j}$ as the explicit stationary solution in \eqref{b55} with
$\scj=\vep j$ and $\bar\Phi^{(\vep)}$ the corresponding renormalization as in
\eqref{b62},
\begin{equation}
  \label{b58}
\phi_t(r) = \lim_{\vep\to0} \bar\Phi^{(\vep)}_t(r) = \Phi^{(\vep)}_0(r) + 2jt = \bar\phi(r)+2jt
\end{equation}
where the first identity is consequence of \eqref{b59}, the second one comes
from the stationarity of $\bar K$ given by \eqref{b57} and the last one is a
computation based on the explicit expressions of $\bar K^{(\vep)}$ and
$\bar\phi$.
\qed

To prove Proposition \ref{b56} we introduce the delta processes associated to
$K$.

\paragraph{The delta harness processes}
We define the delta harness processes $K^{\scj,\delta,-}_\ell$,
$K^{\scj,\delta,+}_\ell$ as follows. Take $\delta\ge 1$, fix an initial
condition $K^{\scj,\delta,-}_{0}\in\cH_{(0,0)}$,  $K^{\scj,\delta,+}_{0}=
\max\{K^{\scj,\delta,-}_{0},V_{(0,\delta\scj)}\}$ and define iteratively
\begin{equation}
\begin{array}{rclcl}
K^{\scj,\delta,\pm}_{\ell} &:=& \Theta^{\ell-[n\delta]}K^{\scj,\delta,\pm}_{[n\delta]},
&\;\text{if}& \ell\in [[n\delta],[(n+1)\delta]-1],\;n\ge0\\
 K^{\scj,\delta,-}_{[n\delta]} &:=& \max\{  K^{\scj,\delta,-}_{[n\delta]-1},
 V_{(0,n\delta 2\scj)}\},
&\;\text{if}&\;n\ge 1\\
 K^{\scj,\delta,+}_{[n\delta]} &:=& \max\{  K^{\scj,\delta,+}_{[n\delta]-1},
 V_{(0,(n+1)\delta 2\scj)}\},
&\;\text{if}&\;n\ge 1.
\end{array}
\end{equation}
Both processes evolve with \eqref{b27} in the time intervals
$[[n\delta],[(n+1)\delta]-1]\cap \Z$ and update the interface at times
$[n\delta]$: the process delta$-$ takes the max with the cone with vertex $(0,n\delta2\scj)$
while the process
delta$+$ takes the max with the cone with vertex $(0,(n+1)\delta2\scj)$.
The following dominating Lemma follows immediately.

\begin{lemma}
\label{h15}
 If   $K^{\scj,\delta,-}_0\le K^\scj_0\le
  K^{\scj,\delta,+}_0$, then $K^{\scj,\delta,-}_\ell\le K^\scj_\ell\le
  K^{\scj,\delta,+}_\ell$, for all $\ell\ge1$; for all $\delta\ge 1, \scj\ge
  0$. Furthermore,
  \begin{equation}
    \label{h16000}
  \sup_{x\in\Z,\ell\ge 0}\{  K^{\scj,\delta,+}_\ell(x)-
  K^{\scj,\delta,-}_\ell(x)\}\le \delta 2\scj.
  \end{equation}
\end{lemma}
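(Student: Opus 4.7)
The plan is to prove both statements by induction on $\ell$, leaning on two elementary order-preserving features of the recursion: (i) the averaging operator $\Theta$ is a positive linear operator, so $F\le G$ pointwise implies $\Theta F\le \Theta G$; and (ii) pointwise $\max$ with a fixed function preserves order, i.e.\ $F\le G$ implies $\max\{F,V\}\le \max\{G,V\}$. An auxiliary observation used throughout is that $\Theta V_v\ge V_v$ pointwise for any cone $V_v$, obtained by direct computation from \eqref{b27}.

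For the ordering I would split the induction step into two cases. At a \emph{bulk step} $\ell\in ([n\delta],[(n+1)\delta]-1]$, all three processes update by a single application of $\Theta$, while the true process additionally takes a $\max$ with $V_{(0,2\scj\ell)}$. The lower inequality is then immediate from (i): $K^{\scj,\delta,-}_\ell=\Theta K^{\scj,\delta,-}_{\ell-1}\le\Theta K^\scj_{\ell-1}\le K^\scj_\ell$. For the upper inequality I would observe that at its last boundary update $K^{\scj,\delta,+}_{[n\delta]}\ge V_{(0,(n+1)\delta 2\scj)}\ge V_{(0,2\scj\ell)}$ (the cone comparison uses $\ell\le (n+1)\delta$), and that this cone lower bound is preserved through the bulk iterations of $\Theta$; hence $\Theta K^{\scj,\delta,+}_{\ell-1}$ pointwise dominates both $\Theta K^\scj_{\ell-1}$ and $V_{(0,2\scj\ell)}$, giving $K^\scj_\ell\le K^{\scj,\delta,+}_\ell$. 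At a \emph{boundary step} $\ell=[n\delta]$ all three updates are $\max$ operations, with cone heights $n\delta 2\scj$, $2\scj\ell$, $(n+1)\delta 2\scj$ respectively; the chain $V_{(0,2\scj\ell)}\le V_{(0,n\delta 2\scj)}\le V_{(0,(n+1)\delta 2\scj)}$ that follows from $[n\delta]\le n\delta\le (n+1)\delta$, combined with (ii) and the inductive hypothesis on the preceding iterate, propagates both inequalities.

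For the uniform gap bound I would run a parallel induction on $K^{\scj,\delta,+}_\ell-K^{\scj,\delta,-}_\ell\le\delta 2\scj$. The initial gap equals $\max\{K^{\scj,\delta,-}_0,V_{(0,\delta\scj)}\}-K^{\scj,\delta,-}_0\le\delta\scj$. At a bulk step the gap transports under the linear operator $\Theta$, which sends the constant $\delta 2\scj$ to itself and so preserves any uniform bound. At a boundary step the two delta cones differ by exactly the constant vertical shift $\delta 2\scj$, and the elementary inequality $\max(a,c)-\max(b,d)\le \max(a-b,c-d)$, applied pointwise together with the induction hypothesis, keeps the gap within $\delta 2\scj$.

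The delicate point is the boundary step of the monotonicity argument: the delta$-$ cone at $[n\delta]$ can be strictly taller than the true-process cone there, and reconciling this with the extra $\max$ operations performed by the true process over the preceding bulk phase eventually calls upon the discrete subharmonicity $\Theta K^\scj_\ell\ge K^\scj_\ell$ of the true-process iterates. This subharmonicity propagates from convex initial data because both $\Theta$ and $\max$ with a cone preserve discrete convexity; once it is in place, the boundary ordering is immediate and the remainder of the proof is routine.
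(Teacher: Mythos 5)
The paper gives no proof of this lemma (it is asserted to ``follow immediately''), and the intended argument is precisely your ingredients (i) and (ii): all three processes are compositions of the order-preserving maps $\Theta$ and $\max\{\cdot,V\}$ with cone heights ordered as $-$, true, $+$, so both the sandwich and the gap bound propagate one step at a time. Your bulk-step arguments, the use of $\Theta V_v\ge V_v$ to keep the $+$ process above the cone $V_{(0,2\scj\ell)}$, and the parallel induction for \eqref{h16000} via $\max(a,c)-\max(b,d)\le\max(a-b,c-d)$ are all correct and are what ``immediately'' refers to.

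The gap is in your boundary step for the lower inequality. You reduce it to the discrete subharmonicity $\Theta K^{\scj}_\ell\ge K^{\scj}_\ell$, which you justify only under the assumption that the initial interface is discretely convex. That is not a hypothesis of the lemma, nor is it available where the lemma is used: in Proposition \ref{b56} the initial condition is $K_0(x)=\phi(\vep x)$ for a general Lipschitz $\phi$ as in Theorem \ref{thm6a}, which need not be convex; at a local spike one has $\Theta K_0<K_0$, and then the chain $K^{\scj,\delta,-}_{[n\delta]-1}\le K^{\scj}_{[n\delta]-1}\le K^{\scj}_{[n\delta]}$ breaks. (Taken literally with $\delta=1$ and a spiked $K_0$, the delta$-$ process $\max\{K_0,V_{(0,2\scj)}\}$ retains the spike while $K^{\scj}_1$ flattens it, so the asserted ordering fails.) The clean way out is not convexity but a consistent reading of the boundary update as $\max\{\Theta K^{\scj,\delta,\pm}_{[n\delta]-1},V\}$, i.e.\ with $\Theta$ acting at every integer time as in the continuous-time analogue \eqref{i8}; then $K^{\scj}_\ell\ge\Theta K^{\scj}_{\ell-1}\ge\Theta K^{\scj,\delta,-}_{\ell-1}$ at every $\ell$ and no convexity is needed. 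Note also that even granting subharmonicity it does not repair the other defect you correctly flag: when $n\delta\notin\Z$ the delta$-$ cone height $n\delta\,2\scj$ exceeds the height $2\scj[n\delta]$ seen by the true process by up to $2\scj$, and iterating $\Theta$ never lifts an interface above a strictly taller cone away from the vertex; this $O(\scj)$ mismatch is a rounding artifact of the definitions, not something a convexity argument can absorb.
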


\paragraph{Hydrodynamic limit of the delta processes} Take a macroscopic
initial condition $\phi$ as in Theorem \ref{thm6a}.  Fix $j>0$ and take $\scj=\vep j$, fix $\delta>0$ and
$\vep$ small such that $\delta\vep^{-2}>1$, take $\delta\vep^{-2}$ in the place
of $\delta$ and define
\[
\Phi^{\vep,\delta,\pm}_t(r)
:= \vep K^{\vep j,\vep^{-2}\delta,\pm}_{[\vep^{-2}t]}([\vep^{-1}r])
\]
with initial $K^{(\vep)}_0 (x)=\phi(\vep x)$, which implies
\[
\Phi^{\vep,\delta,\pm}_0(r,0) = \Phi^\vep_0(r) = \phi(\vep[r\vep^{-1}]).
\]
From Lemma \ref{h15} we have
\begin{equation}
  \label{h16}
  \Phi^{\vep,\delta,-}_t(r)\le \Phi^\vep_t(r)\le\Phi^{\vep,\delta,+}_t(r)
\end{equation}
\begin{equation}
    \label{h17}
\sup_{r,t,\vep}(\Phi^{\vep,\delta,+}_t(r)- \Phi^{\vep,\delta,-}_t(r)) \le 2\delta j.
\end{equation}
\begin{proposition}[hydrodynamics] Let $\phi$ be Lipschitz. Then,
there exists a
constant $C>0$ such that,
\begin{equation}
  \label{h21}
\sup_r|\Phi^{\vep,\delta,\pm}_t(r)- \phi_t^{\delta,\pm}(r)| \le
{Ct}{\delta^{\beta-1}}\vep^{1-2\beta},
\end{equation}
for any $\beta>0$.
\end{proposition}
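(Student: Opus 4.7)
The plan is to prove the estimate by induction on the number of delta-update times in $[0,t]$, combining a one-interval comparison between $\Theta^{\vep^{-2}\delta}$ and $G_\delta$ with the fact that both the cone-max operation and the heat semigroup are $\sup$-norm contractions.

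First I will establish a one-interval estimate: for any $1$-Lipschitz $\psi:\R\to\R$ lying above the cone $V_o$ and equal to it outside a compact set, setting $\tilde K(x):=\psi(\vep x)$, I claim
\[
\sup_{r\in\R}\bigl|\vep\,\Theta^{\vep^{-2}\delta}\tilde K([\vep^{-1}r]) - G_\delta\psi(r)\bigr|\;\le\;C\delta^{\beta}\vep^{1-2\beta}.
\]
By the duality \eqref{d4}, $\Theta^{\vep^{-2}\delta}\tilde K(x)=E[\tilde K(X^x_{\vep^{-2}\delta})]$ where $X^x$ is a symmetric nearest-neighbor walk. I replace $\tilde K$ by its block average at microscopic scale $\vep^{-\beta}$ (an $O(\vep^{\beta})$ error by Lipschitzness), and then compare the discrete transition kernel $p_{\vep^{-2}\delta}(x,\cdot)$ with $G_\delta(\vep x,\cdot)$ via the local central limit theorem; the Riemann-sum approximation of the Gaussian integral on blocks of macroscopic size $\vep^{1-\beta}$ contributes $O(\vep^{1-2\beta}\delta^{-1/2})$, and the two errors together yield the claimed one-interval bound. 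The linear growth $\psi(r)\sim|r|$ is handled by subtracting $V_o$, which is invariant under both evolutions up to an explicit, uniformly bounded Gaussian correction.

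Second, I use the elementary inequalities $\|\max\{f,h\}-\max\{g,h\}\|_\infty\le\|f-g\|_\infty$ and $\|G_s f-G_s g\|_\infty\le\|f-g\|_\infty$. Since $\Phi^{\vep,\delta,\pm}_t$ and $\phi^{\delta,\pm}_t$ alternate between heat-type propagation and cone-max updates at the \emph{same} times $n\delta$ (by construction, \eqref{i8}), discrepancies are preserved but not amplified by these steps. Letting $u_n:=\sup_r|\Phi^{\vep,\delta,\pm}_{n\delta}(r)-\phi^{\delta,\pm}_{n\delta}(r)|$, the first two ingredients give $u_{n+1}\le u_n+C\delta^{\beta}\vep^{1-2\beta}$; starting from $u_0=O(\vep)$ (discretization of the initial data) and iterating at most $\lfloor t/\delta\rfloor$ times, then extending to intermediate $s\in(n\delta,(n+1)\delta)$ by the $G_s$-contraction, yields $\sup_r|\Phi^{\vep,\delta,\pm}_t(r)-\phi^{\delta,\pm}_t(r)|\le Ct\delta^{\beta-1}\vep^{1-2\beta}$.

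The main technical obstacle is the one-interval estimate: the interfaces grow linearly at infinity, so pointwise LCLT bounds on $p_{\vep^{-2}\delta}-G_\delta$ cannot be summed naively. One must work throughout with the bounded correction $\psi-V_o$, whose support grows by at most $O(\sqrt\delta)$ per delta-interval (controlled by the Gaussian tails of $G_\delta$ applied to a compactly supported function), keeping the integrals uniformly controlled. The parameter $\beta\in(0,1/2)$ optimizes the balance between the coarse-graining error and the LCLT error; outside that range the bound is trivial, since $\vep^{1-2\beta}\ge 1$.
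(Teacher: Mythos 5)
Your architecture is exactly the paper's: a one-interval comparison, the observation that both the cone-max and the heat propagation are sup-norm contractions, and an iteration over the $\lfloor t/\delta\rfloor$ update times (the paper's \eqref{g26}, which also absorbs the $O(\vep)$ mismatch between the two cones at each update). The genuine difference is in the one-interval estimate. The paper disposes of it in one line: by the duality \eqref{d4} and the Lipschitz property, $|\E\phi(\vep X^{[\vep^{-1}r]}_{[\vep^{-2}t]})-\E\phi(W^r_t)|\le \E|\vep X^{[\vep^{-1}r]}_{[\vep^{-2}t]}-W^r_t|$, and the dyadic KMT coupling of Lawler--Limic gives $C\delta^{\beta}\vep^{1-2\beta}$ directly; the linear growth at infinity causes no trouble because only the increment $|\vep X-W|$ enters. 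You instead coarse-grain and invoke the local CLT, which forces you to subtract the cone $V_o$ to keep the sums finite. That route is viable and more elementary (no strong approximation needed), and your handling of the linear growth is the right idea, but it is more work and, as written, your bookkeeping does not deliver the stated inequality.

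Concretely: your quoted Riemann-sum/LCLT error $O(\vep^{1-2\beta}\delta^{-1/2})$ is \emph{not} dominated by $C\delta^{\beta}\vep^{1-2\beta}$ (that would require $\delta^{-1/2}\le C\delta^{\beta}$), so after $t/\delta$ iterations you obtain $Ct\,\delta^{-3/2}\vep^{1-2\beta}$ rather than $Ct\,\delta^{\beta-1}\vep^{1-2\beta}$. Likewise the block-averaging error of a $1$-Lipschitz macroscopic profile over blocks of macroscopic length $\vep^{1-\beta}$ is $O(\vep^{1-\beta})$, not $O(\vep^{\beta})$. Since your bound still vanishes as $\vep\to0$ for each fixed $\delta$, and in the proof of Proposition \ref{b56} the limit $\vep\to0$ is taken before $\delta\to0$, the downstream application survives; but the Proposition as stated is not what your estimates prove. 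A sharper execution of your own strategy actually closes this: writing $\psi=V_o+g$ with $g$ bounded and compactly supported, the uniform LCLT error $\sup_{x,y}|p_n(x,y)-\bar p_n(x,y)|\le Cn^{-3/2}$ summed over the $O(\vep^{-1})$ sites of the support of $g$ gives a one-interval error that is polynomially small in $\vep$ with no negative power of $\delta$, which more than suffices. So either tighten the LCLT accounting along these lines or, more economically, replace the whole step by the KMT coupling as the paper does.
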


\begin{proof}
  Take $t<\delta$.  In this case $\phi_t$ obeys the heat equation. Let $W^r_t$
  be Brownian motion with starting point $r$; then the solution $\phi_t$ is
  given by $\phi_t(r) = G_t\phi(r)=
  E\phi(W^r_t)$. By duality \eqref{d4} and assuming $X_n$ and $W_t$ are defined
  in the same probability space,
\begin{eqnarray}
\nonumber    &&|\Phi^{\vep,\delta,\pm}_t(r)- \phi_t^{\delta,\pm}(r)| \;=\;
\big|\E  \big(\phi(\vep X_{[\vep^{-2}t]}^{[\vep^{-1}r]})
-\phi(W^r_t)\big)\big|\\
&&\qquad\le\;\E  \big|\phi(\vep X_{[\vep^{-2}t]}^{[\vep^{-1}r]})
-\phi(W^r_t)\big|\;\le\; \E  \big|\vep X_{[\vep^{-2}t]}^{[\vep^{-1}r]}
-W^r_t\big|\label{g24}\\
&&\qquad\le\; C \delta^ {\beta}\vep^{1-2\beta},\qquad \text{for any }\beta>0,\;\;t<\delta,\label{g25}
\end{eqnarray}
where in \eqref{g24} we used that $\phi$ is Lipschitz and in \eqref{g25} the
dyadic KMT coupling between the Brownian motion and the random walk
 \cite{LL}, Theorem 7.1. 

At $t=\delta$ we have
\begin{eqnarray}
  \label{g26}
      |\max\{\Phi^{\vep,\delta,\pm}_{\delta-},V_{(0,[\vep^{-2}\delta]2j\vep^2)}\}-
      \max\{\phi_{\delta-}^{\delta,\pm},V_{(0,\delta2j)}\}| \le C \delta^{\beta}\vep^{1-2\beta}
      +\vep,
\end{eqnarray}
because the two cones differ at most by $\vep$. Changing the constant $C$,
\eqref{g26} is bounded by $C \delta^{\beta}\vep^{1-2\beta}$, so that  iterating
\eqref{g26} $[(t+1)/\delta]$ times we get \eqref{h21}. \qed
\end{proof}

\medskip
\noindent{\bf Proof of Proposition \ref{b56}}
As a consequence of \eqref{h16} and \eqref{h17},
\begin{eqnarray*}
  |\phi_t-\Phi^{(\vep)}_t(r)| &\le&
  |\phi_t-\phi_t^{\delta,\pm}|+|\phi_t^{\delta,\pm}-\Phi^{\vep,\delta,\pm}_t|
  +|\Phi^{\vep,\delta,\pm}_t - \Phi_t^{(\vep)}|\nonumber\\
&\le& 2\delta  + Ct\delta^{1-\beta} \vep^{1-2\beta} + 2\delta.
\end{eqnarray*}
Taking first $\vep\to0$ and then $\delta\to0$, we get \eqref{b59}.\qed

\section{Conclusions}
\label{sec:80}
In this section we summarize the results we have obtained so far.  In Theorem \ref{thm6a}
and Theorem \ref{thm1} we have proved convergence in
the hydrodynamic limit to a deterministic evolution
for the interface and, respectively, the density.
The limit interface $\phi_t$ is Lipschitz continuous in space and
continuous in time, see Theorem \ref{thm12}.
At each time $t>0$ it ``belongs'' to a cone, in the sense that there
are real numbers $\scl(\phi_t)$ and $\scr(\phi_t)$ so that $\phi_t(r)= |r| + jt$ for $r\notin
(\scl(\phi_t),\scr(\phi_t))$, Theorem \ref{thm14}.  The limit particle densities
inherit analogous properties from the interface.

The interface evolution $\phi_t$ is characterized in terms of a sequence
of upper and lower bounds $\phi_t^{\delta,\pm}$ which in the limit $\delta\to 0$
become identical. $\phi_t^{\delta,\pm}$ are solutions of time-discrete
Stefan problems in the sense that they are obtained
by alternating linear heat diffusion and motion of the boundaries.

We miss however a proof that the limit evolution satisfies
the Stefan problem described in the introduction for the particle density.
We do know however that the stationary solution of \eqref{0.1}-\eqref{0.2}
is indeed stationary for the limit evolution, Theorem \ref{thm13}.   The formula for the velocity
of the boundaries is quite natural once we observe that the levels
of the solution of the heat equation have velocity $-\rho''_t/(2\rho'_t)$ ($\rho'$
and $\rho''$ the space derivatives of $\rho$).  To get \eqref{0.2} we need to add
the information that at the endpoints $\rho' = -2j$ which is consistent with the analysis
of the stationary solution.

We have proved in Theorem \ref{thm1.2} that there is a stationary measure for
the particle process at fixed $\scj=j\vep>0$; we miss however a proof that in
the limit $\vep\to 0$ it becomes supported by the stationary solution of
\eqref{0.1}-\eqref{0.2}, even though this is stationary for the limit evolution.

%
%
%
%

\medskip
{\bf Acknowledgments.}
We thank Stefano Olla for many useful discussions.  \dpred{ We are also indebted to F. Comets and
H. Lacoin for helpful comments and discussions and to a referee of PTRF for useful comments.}

The research has been partially supported by PRIN 2009 (prot. 2009TA2595-002).
A. De Masi thanks the  Departamento de Matem\'atica Universidad de Buenos Aires
for  support and hospitality.
P.A. Ferrari thanks warm hospitality and support at Dipartimento di Matematica Universit\`a
di Roma Tor Vergata and Universit\`a de L'Aquila.

\vskip.5cm

\bibliography{dfp}{}
\bibliographystyle{plain}

\end{document}